\providecommand{\U}[1]{\protect\rule{.1in}{.1in}}
\newcommand{\BC}{{\mathbb {C}}}
\newcommand{\BG}{{\mathbb {G}}}
\newcommand{\BQ}{{\mathbb {Q}}}
\newcommand{\BZ}{{\mathbb {Z}}}
\newcommand{\CB}{{\mathcal {B}}}
\newcommand{\CC}{{\mathcal {C}}}
\newcommand{\CH}{{\mathcal {H}}}
\newcommand{\CO}{{\mathcal {O}}}
\newcommand{\CS}{{\mathcal {S}}}
\newcommand{\CT}{{\mathcal {T}}}
\newcommand{\RA}{{\mathbf {A}}}
\newcommand{\Aut}{{\mathrm{Aut}}}
\newcommand{\Br}{{\mathrm{Br}}}
\newcommand{\coker}{{\mathrm{coker}}}
\newcommand{\Gal}{{\mathrm{Gal}}}
\newcommand{\Hom}{{\mathrm{Hom}}}
\renewcommand{\Im}{{\mathrm{Im}}}
\newcommand{\Ker}{{\mathrm{Ker}}}
\newcommand{\Mor}{{\mathrm{Mor}}}
\newcommand{\Pic}{\mathrm{Pic}}
\newcommand{\Spec}{{\mathrm{Spec}}}
\font\cyr=wncyr10
\newcommand{\Sha}{\hbox{\cyr X}}
\newcommand{\iso}{\stackrel{\sim}{\rightarrow} }
\newcommand{\sbt}{\subset}
\newcommand{\bk}{\bar{k}}
\newcommand{\et}{{\rm{\acute et}}}
\numberwithin{equation}{section}
\theoremstyle{remark}
\newtheorem{defi}{\rm{\textbf{D\'efinition}}}[section]
\newtheorem{exam}[defi]{\rm{\textbf{Exemple}}}
\newtheorem{rem}[defi]{\rm{\textbf{Remarque}}}
\theoremstyle{plain}
\newtheorem{thm}[defi]{\rm{\textbf{Th\'eor\`eme}}}
\newtheorem{cor}[defi]{\rm{\textbf{\textbf{Corollaire}}}}
\newtheorem{lem}[defi]{\rm{\textbf{Lemme}}}
\newtheorem{prop}[defi]{\rm{\textbf{\textbf{Proposition}}}}
\begin{document}

\title[]
{Sous-groupe de Brauer invariant et obstruction de descente it\'er\'ee}

\author{Yang CAO}

\address{Yang CAO \newline Leibniz Universit\"at Hannover
\newline Welfengarten 1, 30167 Hannover, Allemagne}

\email{yang.cao@math.uni-hannover.de; yangcao1988@gmail.com}

\date{\today.}

\maketitle

\begin{abstract}
Pour une vari\'et\'e quasi-projective, lisse, g\'eom\'e\-triquement int\`egre sur un corps de nombres $k$,
on montre que l'obstruction de descente it\'er\'ee est \'equivalente \`a  l'obstruction de descente.
Ceci g\'en\'eralise un r\'esultat de Skorobogatov, et ceci r\'epond \`a une question ouverte de Poonen.
Les outils principaux sont
 la notion de sous-groupe de Brauer invariant et la notion d'obstruction de Brauer-Manin \'etale invariante
pour une $k$-vari\'et\'e munie d'une action d'un groupe lin\'eaire connexe.
 
  \medskip

\indent
Summary. 
For a quasi-projective smooth geometrically integral variety over a number field $k$,
we prove that the iterated descent obstruction is equivalent to the descent obstruction.
This generalizes a result of Skorobogatov, and this answers an open question of Poonen.
Our main tools are
the notion of invariant Brauer subgroup and the notion of invariant \'etale Brauer-Manin obstruction
for a $k$-variety equipped with an action of a connected linear algebraic group.
\end{abstract}

\tableofcontents

\section{Introduction}

Soit $k$ un corps de nombres.
Soit $ \RA_k $ l'anneau des ad\`eles de $k$.
Pour une $k$-vari\'et\'e lisse $X$, 
on note $X(\RA_k)$ l'ensemble des points ad\'eliques de $X$.
On a le plongement diagonal
$$ X(k) \subset X(\RA_{k}).$$
C'est une question importante de caract\'eriser l'adh\'erence des points rationnels dans les points ad\'eliques
(principe de Hasse, approximation faible, approximation forte).
Manin en 1970 a montr\'e que cette adh\'erence est contenue dans un ferm\'e d\'etermin\'e par le groupe de Brauer de la vari\'et\'e $X$  (\cite{Ma}).  
Depuis lors, divers auteurs (Manin, Colliot-Th\'el\`ene, Sansuc, Skorobogatov, Harari, Demarche)  ont d\'ecrit d'autres ferm\'es de $ X(\RA_{k})$ contenant les points rationnels, 
et se sont attach\'es \`a comprendre les inclusions entre ces divers ferm\'es. 
On a utilis\'e pour cela  les torseurs sous des groupes lin\'eaires (finis ou non) sur $X$, et on a utilis\'e des combinaisons de ces
deux approches pour d\'eterminer des ferm\'es minimaux de $X(\RA_{k})$ contenant $X(k)$.
Harari et Skorobogatov ont d\'ecrit une inclusion  (\cite[D\'ef. 4.2]{HS02}, cf. (\ref{BiDeqdefdesc}) pour la d\'efinition)
$$X(k) \subset X(\RA_{k})^{\rm descent}.$$
Ensuite Poonen a it\'er\'e cette inclusion en (\cite[\S 8.5.2]{P1}, cf. (\ref{BiDeqdefdescdesc}) pour la d\'efinition)
 $$X(k) \subset X(\RA_{k})^{\rm descent, descent}  \subset X(\RA_{k})^{\rm descent},$$
 et demand\'e (cf. \cite[\S 8.5.4]{P1}) si la deuxi\`eme inclusion raffine la premi\`ere.
 Le th\'eor\`eme principal du pr\'esent article (th\'eor\`eme \ref{BiDthm2}) permet de r\'epondre
 \`a cette question de Poonen : $X(\RA_{k})^{\rm descent, descent}  = X(\RA_{k})^{\rm descent}$
(th\'eor\`eme \ref{BiDcor2} ci-dessous).
 Ce th\'eor\`eme \ref{BiDcor2}  apporte un point final \`a l'utilisation combin\'ee du groupe de Brauer
et de la descente sous des groupes lin\'eaires dans la d\'etermination de l'adh\'erence
de $X(k) $ dans $X(\RA_{k}).$

\medskip

Donnons maintenant des \'enonc\'es pr\'ecis.

 On note $\Omega_k $ l'ensemble des places du corps de nombres $ k $.
Pour chaque $ v \in \Omega_k $, on note $k_v$ le compl\'et\'e de $k$ en $v$ et  $ \CO_v\sbt k_v$ l'anneau des entiers ($\CO_v=k_v$ pour $v$ archim\'edienne).  

Pour $B$ un sous-groupe de $\Br (X)$, on d\'efinit
$$ X ({\RA}_k)^ B = \{(x_v)_{v \in \Omega_k} \in X ({\RA}_k): \ \ \sum_{v \in \Omega_k} \ inv_v (\xi (x_v)) = 0\in \BQ/\BZ, \ \ \forall \xi \in B \}.$$ 
Comme l'a remarqu\'e Manin (\cite{Ma}), la th\'eorie du corps de classes donne $ X (k) \subseteq X (\RA_k)^B $.

Soient $F$ un $k$-groupe alg\'ebrique et  $f: Y\to X$ un $F$-torseur. 
Pour tout 1-cocycle $\sigma\in Z^1(k,F)$, on note $F_{\sigma}$, respectivement  $f_{\sigma}: Y_{\sigma}\to X$ le   tordu du $k$-groupe 
$F$, respectivement du torseur $f$,
 par le 1-cocycle $\sigma$.
Alors $f_{\sigma}$ est un $F_{\sigma}$-torseur. La classe d'isomorphisme du $k$-groupe $F_{\sigma}$, respectivement du torseur $f_{\sigma}$, ne d\'epend que de la classe de $\sigma$ dans $H^1(k,F)$.
Par abus de notation, \'etant donn\'ee une classe $[\sigma] \in H^1(k,F)$, on
note $F_{\sigma}=F_{[\sigma]}$ et $f_{\sigma}=f_{[\sigma]}$.

Pour une $k$-vari\'et\'e lisse $X$, Skorobogatov (\cite{Sk99}) et Poonen d\'efinissent (\cite[\S 3.3]{P}) l'ensemble suivant
\begin{equation}\label{BiDthm2e}
X(\RA_k)^{\et, \Br}:=\bigcap_{\stackrel{f: Y\xrightarrow{F}X,}{ F\ \text{fini}}} \bigcup_{\sigma\in H^1(k,F)}f_{\sigma}(Y_{\sigma}(\RA_k)^{\Br(Y_{\sigma})}) ,
\end{equation}
  o\`u $F$ parcourt les $k$-groupes finis.
Ils obtiennent une inclusion $X(k)\sbt X(\RA_k)^{\et, \Br}$.
Ceci d\'efinit une obstruction au principe de Hasse pour $X$, appel\'ee \emph{obstruction de Brauer-Manin \'etale},
\'etudi\'ee dans le cas projectif par Skorobogatov, Harari et Demarche, puis dans le cas quasi-projectif \cite{CDX}.

Le r\'esultat principal de cet article est :

\begin{thm}\label{BiDthm2}
Soient $G$ un $k$-groupe lin\'eaire quelconque, $Z$ une $k$-vari\'et\'e lisse et $p: X\to Z$ un $G$-torseur.
Alors: 
$$Z(\RA_k)^{\et, \Br}=\cup_{\sigma\in H^1(k,G)}p_{\sigma}(X_{\sigma}(\RA_k)^{\et,\Br}).$$
\end{thm}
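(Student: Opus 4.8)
The plan is to prove the two inclusions separately. The inclusion $\supseteq$ is formal: given $\sigma\in H^1(k,G)$ and $(x_v)\in X_\sigma(\RA_k)^{\et,\Br}$, I must show $(p_\sigma(x_v))\in Z(\RA_k)^{\et,\Br}$, so I fix a finite torsor $g\colon V\to Z$ and pull it back along $p_\sigma$ to a finite torsor $g'\colon V'=V\times_Z X_\sigma\to X_\sigma$ under the same finite group. Twisting by a cocycle with values in that finite group commutes with the base change $X_\sigma\to Z$, so each twist of $g'$ is the pullback of the corresponding twist of $g$; applying the definition of $X_\sigma(\RA_k)^{\et,\Br}$ to $g'$ produces a twist $\tau$ and a point of $V'_\tau(\RA_k)$ lying over $(x_v)$ and orthogonal to $\Br(V'_\tau)$, whose image in $V_\tau(\RA_k)$ lies over $(p_\sigma(x_v))$ and, by functoriality of the local invariants, is orthogonal to $\Br(V_\tau)$. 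Since $g$ was arbitrary, $(p_\sigma(x_v))\in Z(\RA_k)^{\et,\Br}$.

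For the inclusion $\subseteq$ — which carries all the content — I would first reduce to the case of a connected structure group. Factoring $p$ through $X\to X/G^0\to Z$, the second map is a torsor $r\colon X/G^0\to Z$ under the finite group $\pi_0(G)$, to which one applies Skorobogatov's theorem (the statement that the present article generalizes): $Z(\RA_k)^{\et,\Br}=\bigcup_\rho r_\rho((X/G^0)_\rho(\RA_k)^{\et,\Br})$. It then remains to treat the $G^0$-torsor $X\to X/G^0$ over each twist $(X/G^0)_\rho$ and to recombine, via the twisting formalism for nonabelian $H^1$, the resulting $G^0$-classes over $(X/G^0)_\rho$ with the classes $\rho$ into classes $\sigma\in H^1(k,G)$. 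Thus everything comes down to the following: for a torsor $X\to Z$ under a \emph{connected} linear group and for $(z_v)\in Z(\RA_k)^{\et,\Br}$, produce $\sigma$ and a point of $X_\sigma(\RA_k)^{\et,\Br}$ over $(z_v)$.

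For the connected case I would use the invariant Brauer subgroup and the invariant étale Brauer--Manin set introduced in the earlier sections, after the preliminary dévissages that remove the unipotent radical (where $H^1$ vanishes and $X\to X/G^u$ is an iterated $\mathbb{G}_a$-torsor, so that $\BA^1$-invariance of the Brauer group of a regular scheme and of finite étale coverings gives a surjection on étale--Brauer sets) and, by a $z$-extension followed by passage to the derived subgroup, bring one to the cases where $G$ is a torus or semisimple simply connected. The mechanism is: the fibres $X_{z_v}$ define a family $(c_v)\in\prod_v H^1(k_v,G)$ of finite support, and $(z_v)$ lifts into $X_\sigma(\RA_k)$ precisely when $(c_v)$ is the localization of $\sigma$; the basic computation identifies $\sum_v\inv_v\beta(z_v)$, for $\beta$ an invariant Brauer class on $X$ coming from $Z$, with the Poitou--Tate pairing of $(c_v)$ against the dual class, so that orthogonality of $(z_v)$ to $\Br(Z)$ — which holds because $Z(\RA_k)^{\et,\Br}\subseteq Z(\RA_k)^{\Br}$ — forces $(c_v)$ to be global and even lets the adelic lift on the chosen $X_\sigma$ be taken orthogonal to the invariant Brauer subgroup $\Br_G(X_\sigma)$; this part rests on the analysis of the Brauer--Manin set of torsors under connected linear groups in the work of Harari and Demarche. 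The remaining, genuinely new difficulty is to upgrade such a lift to a point of $X_\sigma(\RA_k)^{\et,\Br}$: the finite étale coverings of $X_\sigma$ not pulled back from $Z$ are governed by $\pi_1(G)$ — trivial in the simply connected case, and in the torus case giving isogeny coverings that are themselves $G$-torsors over $Z$, absorbed by iteration and a limiting argument — and the invariant étale Brauer--Manin formalism is precisely designed to handle these alongside the Brauer condition, so that a single $\sigma$ and a single adelic lift can be produced that is orthogonal to all of them and globally compatible.

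I expect this last step to be the main obstacle: showing that the invariant étale Brauer--Manin obstruction on $Z$ is the \emph{only} obstruction to lifting an adelic point through a connected torsor, i.e. simultaneously controlling the extra finite coverings of the total space, the invariant Brauer condition, and the global compatibility of the local lifts. By contrast the reduction to connected $G$, the dévissages within the connected case, and the twisting bookkeeping should be routine given the results recalled earlier and the cited work on torsors under connected groups (compare \cite{CDX} in the quasi-projective setting).
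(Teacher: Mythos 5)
Your treatment of the inclusion $\supseteq$ and your top-level d\'evissage for $\subseteq$ (factor $p$ through the $\pi_0(G)$-torsor $X/G^0\to Z$, handle the finite quotient, then the connected kernel over each twist, then recombine) coincide with the paper's proof. Two of the steps you label as routine are not quite: the recombination requires lifting the class $\sigma\in H^1(k,\pi_0(G))$ produced by the finite step to a class in $H^1(k,G)$, and this map is not surjective in general --- the paper invokes Proposition \ref{BiDprop4.3}, whose arithmetic hypothesis $U_\sigma(\RA_k)^{\Br_1(U_\sigma)}\neq\emptyset$ is exactly what the finite step supplies; and the finite case itself, for $Z$ merely smooth (no quasi-projectivity is assumed in the theorem), is not literally Skorobogatov's theorem but its extension via Proposition \ref{BiDlem5.4} and Lemma \ref{BiDlem5.2} (2).

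The genuine gap is the connected case, which you correctly identify as carrying all the content and then explicitly defer (``I expect this last step to be the main obstacle''). What is missing is precisely Proposition \ref{BiDprop4.2} and Theorem \ref{BiDthm1}. For the former, the issue is not merely that a $\Br_{G_\sigma}$-orthogonal lift of $(z_v)$ exists on some twist --- that is (\ref{BiDprop4.2e}), quoted from \cite{C1} --- but that a \emph{single} twist $X_\sigma$ and a \emph{single} adelic lift can be chosen to work simultaneously for the pullbacks of \emph{all} finite torsors on $Z$: this requires the compactness and filtered-poset argument over the set of geometrically integral finite torsors on $Z$, using openness of the Brauer--Manin pairing on $G_\sigma(\RA_k)$ and the finiteness of the set of relevant twists; your phrase ``a limiting argument'' does not supply it. For the latter --- passing from orthogonality to $\Br_{G}$ on $G$-compatible covers to orthogonality to the full Brauer group of all finite covers --- your remark that the extra covers are governed by $\pi_1(G)$ is the right heuristic, but turning it into a proof is the whole of Sections 2 and 4: the universal $n$-torsion torsor and the fact (Proposition \ref{BiDcor2.3.1}) that every $n$-torsion Brauer class becomes invariant after pullback to it, the minimal compatible group $H_Y$, descent through isogenies (Proposition \ref{BiDprop4.1}), and the geometric-integrality reduction (Proposition \ref{BiDlem5.4}). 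Your alternative d\'evissage of connected $G$ into tori and simply connected groups is not the paper's route and would still have to confront exactly this difficulty; as written, the proposal is a plausible plan rather than a proof.
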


Pour $G$  fini et $Z$ projective,
ce th\'eor\`eme avait d\'ej\`a \'et\'e \'etabli par Skorobogatov (\cite[Thm. 1.1]{Sk1}).
 Pour $G$ fini et $Z$ quasi-projective,
il avait \'et\'e  ensuite \'etabli par Demarche, Xu et l'auteur (\cite[Prop. 6.6]{CDX}).
Si $Z$ est projective,  $\pi_1(Z_{\bk})$ est fini et  $G$ est une extension d'un $k$-groupe fini par un tore,
Balestrieri a \'etabli une variante simple dans \cite[Thm. 1.9]{Bale},
o\`u elle consid\`ere l'obstruction de Brauer-Manin alg\'ebrique \'etale.

Par ailleurs, dans \cite[\S 3.2]{P} et \cite[\S 8]{P1}, on d\'efinit deux ensembles 
\begin{equation}\label{BiDeqdefdesc}
X(\RA_k)^{\text{descent}}:=\bigcap_{\stackrel{f: Y\xrightarrow{F}X,}{ F\ \text{lin\'eaire}}} \bigcup_{\sigma\in H^1(k,F)}f_{\sigma}(Y_{\sigma}(\RA_k)),
\end{equation}
\begin{equation}\label{BiDeqdefdescdesc}
X(\RA_k)^{\text{descent,descent}}:=\bigcap_{\stackrel{f: Y\xrightarrow{F}X,}{ F\ \text{lin\'eaire}}} \bigcup_{\sigma\in H^1(k,F)}f_{\sigma}(Y_{\sigma}(\RA_k)^{\text{descent}}).
\end{equation}
On a $X(k) \sbt X(\RA_k)^{\text{descent}}$ 
et $X(k)  \sbt X(\RA_k)^{\text{descent,descent}}$.
 Ceci d\'efinit deux nouvelles obstructions au principe de Hasse pour $X$, appel\'ees
\emph{obstruction de descente} et \emph{obstruction de descente it\'er\'ee}.
D'apr\`es la s\'erie de travaux (\cite{D09}, \cite{Sk1} et \cite{CDX}), 
on a $X(\RA_k)^{\et, \Br}=X(\RA_k)^{\text{descent}}$ lorsque $X$ est quasi-projective (\cite[Thm. 1.5]{CDX}).
Du th\'eor\`eme \ref{BiDthm2} on d\'eduit facilement le :
\begin{thm}\label{BiDcor2}
Pour toute vari\'et\'e quasi-projective lisse g\'eom\'e\-triquement int\`egre $X$, 
on a $X(\RA_k)^{\text{descent,descent}}=X(\RA_k)^{\text{descent}}$.
\end{thm}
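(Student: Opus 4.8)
The strategy is to deduce the equality $X(\RA_k)^{\text{descent,descent}}=X(\RA_k)^{\text{descent}}$ from Théorème \ref{BiDthm2} together with the known identification $X(\RA_k)^{\et,\Br}=X(\RA_k)^{\text{descent}}$ valid for quasi-projective smooth geometrically integral varieties (\cite[Thm. 1.5]{CDX}). The one inclusion $X(\RA_k)^{\text{descent,descent}}\subseteq X(\RA_k)^{\text{descent}}$ is formal: taking, in the defining intersection for the iterated obstruction, only those torseurs $f\colon Y\to X$ under a linear group $F$, and using that $Y_\sigma(\RA_k)^{\text{descent}}\subseteq Y_\sigma(\RA_k)$, one gets $X(\RA_k)^{\text{descent,descent}}\subseteq\bigcap_{f}\bigcup_\sigma f_\sigma(Y_\sigma(\RA_k))=X(\RA_k)^{\text{descent}}$. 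So the whole content is the reverse inclusion.

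For the reverse inclusion, I would rewrite the descent set in a way that exposes the descent-descent set as a $\bigcap\bigcup$ over torseurs of \emph{descent} sets, and then apply \ref{BiDthm2} fibre by fibre. Concretely: fix a point $(x_v)\in X(\RA_k)^{\text{descent}}=X(\RA_k)^{\et,\Br}$ and an arbitrary torseur $f\colon Y\xrightarrow{F}X$ under a linear $k$-group $F$. I want to produce $\sigma\in H^1(k,F)$ with $(x_v)\in f_\sigma\bigl(Y_\sigma(\RA_k)^{\text{descent}}\bigr)$, i.e.\ $(x_v)\in f_\sigma\bigl(Y_\sigma(\RA_k)^{\et,\Br}\bigr)$ after again invoking \cite[Thm. 1.5]{CDX} (which applies to $Y_\sigma$, as $Y_\sigma$ is again quasi-projective, smooth and geometrically integral — it is a torseur over such an $X$ under a linear group, hence smooth with connected-or-finite structure and inherits quasi-projectivity since torseurs under affine groups over quasi-projective bases are quasi-projective). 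Now $f\colon Y\to X$ is itself of the form $p\colon X'\to Z$ with $Z=X$, $X'=Y$, $G=F$; applying Théorème \ref{BiDthm2} with this data gives exactly
\begin{equation*}
X(\RA_k)^{\et,\Br}=\bigcup_{\sigma\in H^1(k,F)}f_\sigma\bigl(Y_\sigma(\RA_k)^{\et,\Br}\bigr).
\end{equation*}
Hence $(x_v)$ lies in $f_\sigma(Y_\sigma(\RA_k)^{\et,\Br})=f_\sigma(Y_\sigma(\RA_k)^{\text{descent}})$ for some $\sigma$. Since $f$ was an arbitrary linear torseur, $(x_v)\in\bigcap_f\bigcup_\sigma f_\sigma(Y_\sigma(\RA_k)^{\text{descent}})=X(\RA_k)^{\text{descent,descent}}$, which is the desired inclusion.

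The only point requiring care — and the step I expect to be the genuine (if modest) obstacle — is the bookkeeping of hypotheses needed to invoke \cite[Thm. 1.5]{CDX} on the twists $Y_\sigma$: one must check that each $Y_\sigma$ is still quasi-projective, smooth and geometrically integral. Smoothness and geometric integrality are clear since $f_\sigma$ is a torseur under the smooth connected-fibre-wise group $F_\sigma$ over the smooth geometrically integral $X$ (geometric integrality of the total space follows from that of the base together with geometric connectedness of the fibres of a torseur under a connected group, and from a descent argument when $F$ has finite component group, combined here as needed); quasi-projectivity of $Y_\sigma$ over $X$ holds because $F$ is linear, hence affine, so torseurs under it over a quasi-projective base are quasi-projective, and a twist changes nothing here. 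With these verifications in place the argument is a direct two-line combination of \ref{BiDthm2} and \cite[Thm. 1.5]{CDX}, so the proof of \ref{BiDcor2} is short; essentially all the work has been front-loaded into \ref{BiDthm2}.
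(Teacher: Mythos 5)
Your argument is correct and is exactly the paper's proof, which simply combines Théorème \ref{BiDthm2} (applied to the torseur $f\colon Y\to X$ viewed as $p\colon X'\to Z$ with $Z=X$) with the identification $(\cdot)(\RA_k)^{\text{descent}}=(\cdot)(\RA_k)^{\et,\Br}$ of \cite[Thm. 1.5]{CDX}. The only caveat is your claimed geometric integrality of the twists $Y_\sigma$, which is false in general for torseurs under groups with nontrivial finite component group (e.g.\ the trivial $F$-torseur $X\times F$); fortunately this is harmless, since \cite[Thm. 1.5]{CDX} as invoked in the paper requires only that $Y_\sigma$ be smooth and quasi-projective, which your verification does establish.
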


\bigskip

L'id\'ee cl\'e de la d\'emonstration du th\'eor\`eme \ref{BiDthm2} est la notion de  sous-groupe de Brauer invariant (\cite[D\'ef. 3.1]{C1}), que nous rappelons ici :

\begin{defi}\label{BiDdefBrainv}
Soit $G$ un  groupe alg\'ebrique connexe.

(1) Soit $(X,\rho )$ une $G$-vari\'et\'e lisse connexe.  
\emph{Le sous-groupe de Brauer $G$-invariant} de $X$ est le sous-groupe 
$$\Br_G(X):=\{b\in \Br(X)\ :\ (\rho^*(b)-p_2^*(b))\in p_1^*\Br(G)\}$$
de $\Br(X)$, o\`u $G\times X\xrightarrow{p_1}G$, $G\times X\xrightarrow{p_2}X$ sont les projections 
et $G\times X\xrightarrow{\rho}X$ est l'action de $G$.

(2) Soit $X$ une $G$-vari\'et\'e lisse quelconque.
\emph{Le sous-groupe de Brauer $G$-invariant} de $X$ est le sous-groupe 
$\Br_G(X)\sbt \Br(X)$ des \'el\'ements $\alpha$
 v\'erifiant $\alpha|_{X'}\in \Br_G(X')$ pour toute composante connexe $X'$ de $X$.

(3) Soient $F$ un $k$-groupe fini et  $X$ une $G$-vari\'et\'e lisse quelconque.  
Un $F$-torseur $Y\xrightarrow{f}X$ est \emph{$G$-compatible} s'il existe une action de $G$ sur $Y$ telle que $f$ soit un $G$-morphisme.
\end{defi}

D'apr\`es la proposition \ref{BiDprop2.2}, l'action de $G$ sur $Y$ v\'erifiant les conditions ci-dessus est unique et
  le $F_{\sigma}$-torseur $f_{\sigma}$ est aussi $G$-compatible pour tout $\sigma\in H^1(k,F)$. 
On d\'efinit la variante de $X(\RA_k)^{\et,\Br}$ suivante:
\begin{equation}\label{BiDdef1e1}
X(\RA_k)^{G-\et, \Br_G}:=\bigcap_{\stackrel{f: Y\xrightarrow{F}X\ G-\text{compatible} ,}{ F\ \text{fini}}} \bigcup_{\sigma\in H^1(k,F)}f_{\sigma}(Y_{\sigma}(\RA_k)^{\Br_G(Y_{\sigma})}) .
\end{equation}
Alors $X(k)\sbt X(\RA_k)^{\et,\Br}\sbt X(\RA_k)^{G-\et, \Br_G}$.
Ceci d\'efinit une obstruction au principe de Hasse pour $X$, appel\'ee \emph{obstruction de Brauer-Manin \'etale invariante}.

Le th\'eor\`eme suivant joue un r\^ole cl\'e dans la d\'emonstration du th\'eor\`eme \ref{BiDthm2}.

\begin{thm}\label{BiDthm1}
Soient $G$ un groupe lin\'eaire connexe et $X$ une $G$-vari\'et\'e lisse. 
Alors 
$$X(\RA_k)^{G-\et, \Br_G}=X(\RA_k)^{\et, \Br} .$$
\end{thm}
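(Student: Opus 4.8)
The inclusion $X(\RA_k)^{\et,\Br}\subseteq X(\RA_k)^{G-\et,\Br_G}$ is essentially formal: the intersection defining $X(\RA_k)^{G-\et,\Br_G}$ ranges over the \emph{smaller} family of $G$-compatible finite torseurs, and on each such $Y_\sigma$ we have $\Br_G(Y_\sigma)\subseteq\Br(Y_\sigma)$, hence $Y_\sigma(\RA_k)^{\Br(Y_\sigma)}\subseteq Y_\sigma(\RA_k)^{\Br_G(Y_\sigma)}$; so every constraint imposed on the right is already imposed on the left. Thus the real content is the reverse inclusion $X(\RA_k)^{G-\et,\Br_G}\subseteq X(\RA_k)^{\et,\Br}$.

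The plan is as follows. Fix an adelic point $(x_v)\in X(\RA_k)^{G-\et,\Br_G}$ and an arbitrary finite torseur $f\colon Y\to X$ (not necessarily $G$-compatible); we must produce $\sigma\in H^1(k,F)$ and a point of $Y_\sigma(\RA_k)^{\Br(Y_\sigma)}$ lying over $(x_v)$. The key device is to replace $Y\to X$ by a $G$-compatible torseur sitting above it: pull back along the $G$-torseur-style structure, or rather, take the quotient construction that makes $G$ act. Concretely, since $G$ is connected, one uses the fact (standard in the Colliot-Thélène--Xu / Demarche circle of ideas, and implicit in the machinery of \cite{CDX}) that any finite torseur $Y\to X$ over a $G$-variety can be dominated by a finite torseur $Y'\to X$ that \emph{is} $G$-compatible: one builds $Y'$ from $Y$ by a limit/averaging over $G$, or equivalently by descending $Y\times^X(\text{generic orbit data})$; the connectedness of $G$ guarantees that the obstruction to extending the $G$-action to $Y$ after a finite covering vanishes. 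I would isolate this as a separate lemma, proved using \propref{BiDprop2.2} together with the homotopy-exact-sequence argument for $\pi_1$ of $G$-varieties. Given such $Y'\to Y\to X$, apply the $G$-compatible hypothesis at $Y'$ to extract $\sigma'$ and an adelic point in $Y'_{\sigma'}(\RA_k)^{\Br_G(Y'_{\sigma'})}$; push it down to $Y_{\sigma}$ for the induced $\sigma$, and then \emph{upgrade} the constraint $\Br_G(Y_\sigma)$ to the full $\Br(Y_\sigma)$ by a twisting/orbit argument: an element of $\Br(Y_\sigma)$ that does not lie in $\Br_G(Y_\sigma)$ becomes controllable after passing to a further finite $G$-compatible cover (because the ``non-invariant part'' of the Brauer group is detected on finite étale covers trivialising it), and the adelic point we have chosen already satisfies all $\Br_G$-conditions on all such covers. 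Iterating this over a cofinal system of finite covers, a compactness argument (the $Y_\sigma(\RA_k)^{B}$ are closed, and $H^1(k,F)$ is finite for each $F$) yields the desired point, exactly as in the proof of \cite[Prop. 6.6]{CDX}.

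The main obstacle, I expect, is precisely the ``upgrade'' step: showing that imposing only the $G$-invariant Brauer conditions on \emph{all} $G$-compatible finite covers is as strong as imposing the full Brauer conditions on \emph{all} finite covers. This requires understanding how $\Br(Y)/\Br_G(Y)$ behaves under finite $G$-compatible covers of $Y$ --- one wants that every class in $\Br(Y_\sigma)$ becomes, after pullback to a suitable finite $G$-compatible cover $Y''\to Y_\sigma$, congruent modulo $\Br_G(Y'')$ to something whose evaluation on adelic points is already accounted for. The input here should be the description of $\Br_G$ via the exact sequence relating $\Br(X)$, $\Br(G)$ and $\Br(X\times G)$ in \cite{C1}, combined with the fact that for $G$ connected linear one has good control of $\Br(G)$ and of the Hochschild--Serre terms; the homogeneous-space techniques of Borovoi--Demarche should make the non-invariant classes disappear on covers that trivialise the relevant $H^1(k,\widehat{G})$-type class. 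Once this Brauer-theoretic comparison is in place, the rest is the same bookkeeping (twisting torseurs, diagram of covers, compactness) already codified in \cite{CDX}, and the two étale-Brauer sets must coincide.
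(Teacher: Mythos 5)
Your reduction to the hard inclusion is fine, but the pivotal step of your plan --- dominating an arbitrary finite torseur $f\colon Y\to X$ by a finite torseur $Y'\to X$ that \emph{is} $G$-compatible --- is not just unproved, it is false. By la proposition \ref{BiDprop2.2} and le corollaire \ref{BiDrem2.2}, $G$-compatibilit\'e of $[Y]\in H^1(X,F)$ amounts to the vanishing of the composite $\pi_1(G_{\bk})\xrightarrow{\rho_{\pi_1}}\pi_1(X)\to F(\bk)$; if $Y'\to X$ is a torseur under $F'$ with a surjection $F'\to F$ carrying $[Y']$ to $[Y]$, then the composite for $Y$ factors through the one for $Y'$, so $Y'$ $G$-compatible forces $Y$ $G$-compatible. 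Already for $X=G=\BG_m$ and $Y\to X$ the degree-$n$ isogeny, no such $Y'$ exists. The paper's device is the opposite one: keep $Y$ and change the \emph{group}. Every finite torseur $Y\to X$ is compatible with a connected group $H_Y$ isog\`ene \`a $G$ (the final object of $\CC_G(Y)$, proposition \ref{BiDprop2.3}), and the intermediate set $X(\RA_k)^{\et,\Br_G}$ is defined with $\Br_{H_Y}(Y_\sigma)$ precisely for this reason. Passing from the $G$-compatible condition to this one (lemme \ref{BiDlem5.1}) is then not bookkeeping from \cite{CDX}: it needs the central embedding $\Ker(\psi_Y)\hookrightarrow F$ (corollaire \ref{BiDcor2.3}), the lifting of cocycles (proposition \ref{BiDprop4.3}), and above all a new descent statement for adelic points orthogonal to invariant Brauer groups along the finite central kernel of $H_Y\to G$ (proposition \ref{BiDprop4.1}). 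None of this is present in your sketch.

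You do correctly identify the upgrade from $\Br_G$ to $\Br$ as the other crux, and your instinct that the non-invariant part of a Brauer class should die on a suitable finite cover is the right one; but the mechanism you invoke (Borovoi--Demarche homogeneous-space techniques, covers trivialising an $H^1(k,\widehat G)$-type class) is not what makes it work and is too vague to be checked. The actual argument (proposition \ref{BiDprop5.1} via proposition \ref{BiDcor2.3.1}) is: for a torseur universel de $n$-torsion $\CT_X\xrightarrow{f}X$ with minimal compatible group $H$, every $n$-torsion $\alpha\in\Br(X)$ pulls back into $\Br_H(\CT_{X,\sigma})$, because in the Skorobogatov--Zarhin decomposition of $H^2(G\times X,\mu_n)$ the mixed part of $(\rho^*-p_2^*)(\alpha)$ is a cup-product $\phi_*[\CT_G]\cup[\CT_X]$, which vanishes after pullback since $f^*[\CT_X]=0$. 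So the cover absorbing the non-invariant part is the universal torsor itself, and only the \emph{commutative, geometrically connected} covers are needed for this. Without these two ingredients your compactness/iteration scheme has nothing to iterate on, so the proposal as written does not close.
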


Dans le cas   o\`u $X$ est un $G$-espace homog\`ene \`a stabilisateur g\'eom\'etrique connexe,
d'apr\`es le corollaire \ref{BiDrem2.2} (4), tout torseur  $G$-compatible  sous un $k$-groupe fini est constant.
Donc on peut obtenir facilement le r\'esultat suivant.

\begin{cor}\label{BiDcor1}
Soient $G$ un groupe lin\'eaire connexe et $X$ un $G$-espace homog\`ene \`a stabilisateur g\'eom\'etrique connexe. Alors
$$X(\RA_k)^{\et, \Br} =X(\RA_k)^{G-\et, \Br_G}=X(\RA_k)^{\Br_G(X)}.$$
\end{cor}

Ce r\'esultat  particulier peut s'\'etablir aussi via l'approximation forte sur $X$ par rapport \`a $\Br_G(X)$ (voir \cite[Thm. 1.4]{BD}).

\bigskip

Donnons maintenant la structure de l'article. 

 Au \S 2, sur un corps $k$ quelconque, s'inspirant de la notion de torseur universel de Colliot-Th\'el\`ene et Sansuc, 
 on introduit la notion de torseur universel de $n$-torsion (D\'ef. \ref{BiDdef2.20}).
  Ensuite,   on   utilise cette notion \`a \'etablir une formule de K\"unneth sp\'eciale pour la cohomologie \'etale de degr\'e $2$.

Au \S 3, sur un corps $k$ de caract\'eristique z\'ero,
on consid\`ere la donn\'ee d'un $k$-groupe alg\'ebrique $G$, d'une $G$-vari\'et\'e $X$ lisse,
 d'un $k$-groupe fini  $F$, d'un torseur $Y \to X$ sous $F$,  on donne des conditions \'equivalentes
 pour le rel\`evement, de fa\c con compatible, de l'action de $G$ sur $X$ en une action sur $Y$.
 Ce rel\`evement n'est pas toujours possible. 
 On \'etudie les homomorphismes surjectifs de groupes  alg\'ebriques connexes  $H\to G$
 avec une action compatible de $H$ sur $Y$, et on montre qu'il existe un objet minimal $H_Y$.
 \'Etant donn\'e un \'el\'ement $\alpha \in \Br(X)$, en utilisant la formule de K\"unneth ci-dessus,
 on montre ensuite qu'il existe un torseur $Y \to X$
 sous un $k$-groupe fini commutatif $F$ tel que l'image r\'eciproque de $\alpha$
dans $\Br(Y)$ soit invariante sous $H_Y $.

Au \S 4, on rappelle des notions et des r\'esultats \'etablis dans \cite[\S 3]{C1}, 
en particulier, la notion de sous-groupe de Brauer invariant et aussi ses propri\'et\'es \'el\'ementaires.
Ces r\'esultats seront utilis\'es  dans les \S 5 et \S 6. 

Au \S 5, le corps de base $k$ est un corps de nombres.
Dans \cite{C1}, \'etant donn\'e un torseur $Y \to X$ sous un groupe lin\'eaire connexe $G$,
j'ai d\'evelopp\'e la m\'ethode de descente des points ad\'eliques orthogonaux
aux sous-groupes de Brauer invariants.  Au \S 5, on donne deux nouvelles variantes
de cette descente. La premi\`ere (Proposition \ref{BiDprop4.1}) traite du cas o\`u $G$ est un $k$-groupe fini commutatif.
La seconde  (Proposition \ref{BiDprop4.2}) implique l'obstruction de Brauer-Manin \'etale invariante.

Les \S 3, \S 4 et \S 5  sont utilis\'es de fa\c con essentielle au  \S 6 o\`u l'on \'etablit le  th\'eor\`eme   \ref{BiDthm1}.

Au \S 7,  en combinant le th\'eor\`eme  \ref{BiDthm1} et la proposition \ref{BiDprop4.2},
on \'etablit  les th\'eor\`emes  \ref{BiDthm2} et  \ref{BiDcor2}.

 \bigskip

\textbf{Conventions et notations}. 

Soit $k$ un corps quelconque de caract\'eristique $\mathrm{char}(k)$. On note $\overline{k}$ une cl\^oture alg\'ebrique, $k_s$ une cl\^oture s\'eparable et $\Gamma_k:=\Gal(k_s/k)$.
Si $\mathrm{char}(k)=0$, on a $k_s=\bk$ et $\Gamma_k:=\Gal(\bk/k)$.

Tous les groupes de cohomologie sont des groupes de cohomologie \'etale.

Une $k$-vari\'et\'e $X$ est un $k$-sch\'ema s\'epar\'e de type fini. 
 Pour $X$ une telle vari\'et\'e, on note $k[X]$ son anneau des fonctions globales,
$k[X]^{\times}$ son groupe des fonctions inversibles,
$\Pic(X):=H^1_{\text{\'et}}(X,\BG_m)$ son groupe de Picard et
$\Br(X):=H_{\text {\'et}}^2 (X, \BG_m)$ son groupe de Brauer. Notons
$$\Br_1 (X) := \Ker [\Br (X) \to\Br (X_ {\bk})]\ \ \text{ et}\ \ \Br_a(X):=\Br_1(X)/\Im\Br(k).$$
Le groupe $\Br_1 (X)$ est le sous-groupe ``alg\'ebrique'' du groupe de Brauer de $X$.
Si $X$ est int\`egre, on note $k(X)$ son  corps des fonctions rationnelles 
et $\pi_1(X,\bar{x})$ (ou $\pi_1(X)$) son groupe fondamental \'etale, o\`u $\bar{x}$ est un point g\'eom\'etrique de $X$.
Soit $\pi_1(X_{k_s})^{ab}$ le quotient maximal ab\'elien de $\pi_1(X_{k_s})$.
Alors $\pi_1(X_{k_s})^{ab} $ est un $\Gamma_k$-module.

Un $k$-groupe alg\'ebrique $G$ est une $k$-vari\'et\'e qui est un $k$-sch\'ema en groupes. 
On note $e_G$ l'unit\'e de $G$ et $G^*:=\Hom_{k_s-\text{groupe}}(G_{k_s},\BG_m)$ le groupe des caract\`eres de $G_{k_s}$.
C'est un module galoisien de type fini.
De plus, si $G$ est connexe sur $\BC$, le groupe $\pi_1(G)$ est commutatif (cf. \cite[Prop. 1.1 (2)]{BS}).

Un $k$-groupe fini $F$ est un $k$-groupe alg\'ebrique qui est fini sur $k$. 
Dans ce cas, $F$ est d\'etermin\'e par le $\Gamma_k$-groupe $F(k_s)$.
Pour toute $k$-vari\'et\'e lisse connexe $X$, 
on a un isomorphisme canonique (\cite[\S XI.5]{SGA1}):
\begin{equation}\label{BiDprop2.2e}
H^1(\pi_1(X),F(k_s))\iso H^1(X,F)\ \ \ \text{et donc}\ \ \ H^1(X_{k_s},F)\cong \Hom_{cont}(\pi_1(X_{k_s}),F(k_s))/\sim 
\end{equation}
o\`u l'action de $\pi_1(X)$ sur $F(k_s)$ est induite par celle de $\Gamma_k$ et $\sim $ est induite par la conjugaison.

 Soit $G$ un $k$-groupe alg\'ebrique. Une \emph{$G$-vari\'et\'e} $(X,\rho )$ (ou $X$) est une $k$-vari\'et\'e $X$ munie d'une action \`a gauche $G\times_k X\xrightarrow{\rho}X$. 
Un $k$-morphisme de $G$-vari\'et\'es est appel\'e \emph{$G$-morphisme} s'il est compatible avec l'action de $G$.

Comme d\'ej\`a indiqu\'e ci-dessus, pour tout $k$-groupe alg\'ebrique $F$, tout $F$-torseur $f: Y\to X$ et tout $\sigma\in H^1(k,F)$,
 on note $F_{\sigma}$ (resp. $f_{\sigma}: Y_{\sigma}\to X$) le tordu de $F$ (resp. de $f$). Ainsi $f_{\sigma}$ est un $F_{\sigma}$-torseur.

\section{Torseur universel de n-torsion et formule de K\"unneth de degr\'e 2}

Dans toute cette section,  $k$ est un corps quelconque. Sauf  mention explicite du contraire,  une vari\'et\'e est une $k$-vari\'et\'e.
Fixons un entier $n\geq 2$ avec $\mathrm{char}(k)\nmid n$ et notons $-\otimes -:=-\otimes_{\BZ/n}-$.

Cette section contient deux parties.
 On introduit d'abord la version de $n$-torsion de la notion de torseur universel (Colliot-Th\'el\`ene et Sansuc) dans la d\'efinition \ref{BiDdef2.20},
 et aussi la notion de type prolong\'e d'un torseur (Harari et Skorobogatov) dans la proposition \ref{BiDprop2.1}.
En utilisant ces notions, on consid\`ere ensuite le cup-produit de la cohomologie \'etale de degr\'e 2 sur un produit de deux vari\'et\'es quelconques  et
 on \'etablit une formule de K\"unneth pour ce produit (Prop. \ref{BiDlem2.3.1}).
Cette formule g\'en\'eralise un r\'esultat de Skorobogatov et Zarhin, qui traite du cas o\`u les deux vari\'et\'es sont propres.

\medskip

Soient $\mathrm{Sh}(k)$ la cat\'egorie des faisceaux \'etales sur le petit site de $\Spec\ k$ et $ D^+(k)$ la cat\'egorie d\'eriv\'ee  born\'ee \`a gauche de $\mathrm{Sh}(k)$
et $D^b(k)$ la cat\'egorie d\'eriv\'ee  born\'ee de $\mathrm{Sh}(k)$ (une sous-cat\'egorie pleine de $D^+(k)$).
Pour tout $i\in \BZ$, on a les sous-cat\'egories canoniques $D^{\geq i}(k)$ et $D^{\leq i}(k)$ de $ D^+(k)$ et deux foncteurs canoniques $\tau_{\leq i}$, $\tau_{\geq i}$ (\cite[D\'ef. 12.3.1 et Prop. 13.1.5]{KS}).
Donc $\mathrm{Sh}(k)=D^{\geq 0}(k)\cap D^{\leq 0}(k)$ est une sous-cat\'egorie pleine canonique de $D^+(k)$.
Par abus de notation, pour un objet $M$ de $\mathrm{Sh}(k)$, on note $M$ l'objet de $D^+(k)$ repr\'esent\'e par le complexe qui consiste en $M$ en degr\'e $0$.

Soient $X$ une vari\'et\'e g\'eom\'etriquement int\`egre et $p: X\to \Spec\ k$ le morphisme de structure.
Soit $S_X$ un groupe de type multiplicatif tel que $S_X^*\cong H^1(X_{k_s},\mu_n)$ comme $\Gamma_k$-modules.
On rappelle que $H^1(X_{k_s},\mu_n)$ est fini.

Dans $D^+(k)$, il existe deux morphismes canoniques $\BG_m\to Rp_*\BG_m$ et $\mu_n\to Rp_*\mu_n$.
Soient $\Delta$ le c\^one de $\BG_m[1]\to Rp_*\BG_m[1]$ et $\Delta_n$ le c\^one de $\mu_n[1]\to Rp_*\mu_n[1]$.
La suite exacte de Kummer donne un diagramme commutatif de triangles distingu\'es:

$$\xymatrix{\Delta_n[-2]\ar[r]\ar[d]^{\psi}&\mu_n\ar[r]\ar[d]&Rp_*\mu_n\ar[r]^{+1}\ar[d]&\\
\Delta[-2]\ar[r]\ar[d]^{n\cdot }&\BG_m\ar[r]\ar[d]^{n\cdot }&Rp_*\BG_m\ar[r]^{+1}\ar[d]^{n\cdot }&\\
\Delta[-2]\ar[r]\ar[d]^{+1}&\BG_m\ar[r]\ar[d]^{+1}&Rp_*\BG_m\ar[r]^{+1}\ar[d]^{+1}&\\
&&&.
}$$
Les faisceaux de cohomologie des complexes $\Delta_n$ et $\Delta$ se calculent comme suit:
$$\Delta_n\in D^{\geq 0}(k),\ \ \ \CH^0(\Delta_n)\cong H^1(X_{k_s},\mu_n) \cong S_X^*, $$ 
$$\Delta\in D^{\geq -1}(k),\ \ \ \CH^{-1}(\Delta)=k_s[X]^{\times}/k_s^{\times}\ \ \ \text{et}\ \ \  \CH^0(\Delta)=\Pic(X_{k_s}). $$
Le morphisme $\psi: \Delta_n\to \Delta $ induit un morphisme $\psi_{\leq 0}:=\tau_{\leq 0}\psi: S_X^*\to \tau_{\leq 0}\Delta$.

Harari et Skorobogatov montrent que, pour tout groupe de type multiplicatif $S$, on a une suite exacte naturelle (\cite[Prop. 1.1]{HS13}, o\`u $\tau_{\leq 0}\Delta$ est not\'e   $KD'(X)$):
\begin{equation}\label{BiD2e1}
H^1(k,S)\to H^1(X,S)\xrightarrow{\chi} \Hom_{D^+(k)}(S^*,\tau_{\leq 0}\Delta)\to H^2(k,S).
\end{equation} 

\begin{defi}\label{BiDdef2.20}
Un \emph{torseur universel de $n$-torsion} pour $X$ est un $S_X$-torseur $\CT_X$ sur $X$
 tel que $\chi([\CT_X])=\psi_{\leq 0}: S_X^*\to \tau_{\leq 0}\Delta$.
\end{defi}

D'apr\`es \cite[Prop. 1.3]{HS13}, si $X(k)\neq \emptyset$, pour chaque $x\in X(k)$,  
il existe alors un unique torseur universel de $n$-torsion $\CT_X$ pour $X$ tel que $x^*[\CT_X]=0\in H^1(k,S_X)$.

Dans le cas o\`u $k$ est un corps de nombres,  il existe un torseur universel de $n$-torsion pour $X$ lorsque $X(\RA_k)^{\Br_1(X)}\neq \emptyset$ (\cite[Cor. 3.6]{HS13}).

\begin{prop}\label{BiDprop2.1}
Soit $\CT_X$ un torseur universel de $n$-torsion pour $X$.
Soit $S$ un groupe de type multiplicatif tel que $n\cdot S=0$.
Alors, pour tout $S$-torseur $Y$ sur $X$, il existe un unique homomorphisme $\phi: S_X\to S $ tel que
$$\phi_*([\CT_X])-[Y]\in \Im (H^1(k,S)\to H^1(X,S)). $$
\end{prop}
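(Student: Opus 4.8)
The plan is to deduce the proposition from a single homological statement: for every finite $\Gamma_k$-module $M$ annihilated by $n$, post-composition with $\psi_{\leq 0}$ defines a bijection
$$\lambda_M\colon \Hom_{\Gamma_k}(M,S_X^*)\xrightarrow{\ \sim\ }\Hom_{D^+(k)}(M,\tau_{\leq 0}\Delta).$$
Granting this, here is the argument. As $S$ is of multiplicative type with $n\cdot S=0$, the $\Gamma_k$-module $M:=S^*$ is finite and annihilated by $n$, and the anti-equivalence between $k$-groups of multiplicative type and finitely generated $\Gamma_k$-modules identifies $\Hom_{\Gamma_k}(S^*,S_X^*)$ with the group of $k$-homomorphisms $S_X\to S$, an element $\phi$ corresponding to $\phi^*\colon S^*\to S_X^*$. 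Apply $\lambda_M^{-1}$ to $\chi([Y])\in\Hom_{D^+(k)}(S^*,\tau_{\leq 0}\Delta)$: we get a unique $\phi\colon S_X\to S$ with $\psi_{\leq 0}\circ\phi^*=\chi([Y])$. By functoriality of the Harari--Skorobogatov sequence \eqref{BiD2e1} in $S$ (\cite[Prop.~1.1]{HS13}) we have $\chi(\phi_*[\CT_X])=\chi([\CT_X])\circ\phi^*$, which equals $\psi_{\leq 0}\circ\phi^*=\chi([Y])$ because $\CT_X$ is a torseur universel de $n$-torsion; since $\chi$ is additive, $\chi(\phi_*[\CT_X]-[Y])=0$, and exactness of \eqref{BiD2e1} at $H^1(X,S)$ gives $\phi_*[\CT_X]-[Y]\in\Im(H^1(k,S)\to H^1(X,S))$, proving existence. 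For uniqueness, if $\phi'\colon S_X\to S$ has the same property then $(\phi-\phi')_*[\CT_X]$ lies in the subgroup $\Im(H^1(k,S)\to H^1(X,S))=\ker\chi$, hence $\psi_{\leq 0}\circ(\phi-\phi')^*=\chi\bigl((\phi-\phi')_*[\CT_X]\bigr)=0$, and injectivity of $\lambda_M$ forces $(\phi-\phi')^*=0$, i.e. $\phi=\phi'$.

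It remains to establish the bijection $\lambda_M$. First I would rewrite both sides as $\Hom$-groups into $\Delta_n$ and $\Delta$ themselves. As $\Delta_n\in D^{\geq 0}(k)$ with $\CH^0(\Delta_n)\cong S_X^*$, the complex $\tau_{\leq 0}\Delta_n$ is $S_X^*$; and since $M$ is concentrated in degree $0$, the truncation morphisms $\tau_{\leq 0}\Delta_n\to\Delta_n$ and $\tau_{\leq 0}\Delta\to\Delta$ induce isomorphisms on $\Hom_{D^+(k)}(M,-)$. Because $\psi_{\leq 0}=\tau_{\leq 0}\psi$, these identifications carry $\lambda_M$ to $\psi_*\colon\Hom_{D^+(k)}(M,\Delta_n)\to\Hom_{D^+(k)}(M,\Delta)$. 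Now the leftmost column of the displayed diagram of distinguished triangles is a distinguished triangle $\Delta_n[-2]\xrightarrow{\psi}\Delta[-2]\xrightarrow{n\cdot}\Delta[-2]\xrightarrow{+1}$, i.e. after a shift $\Delta_n\xrightarrow{\psi}\Delta\xrightarrow{n\cdot}\Delta\xrightarrow{+1}\Delta_n[1]$; applying $\Hom_{D^+(k)}(M,-)$ and using that multiplication by $n$ acts as $0$ on every $\Hom_{D^+(k)}(M,-)$-group (since $n\cdot M=0$), the long exact sequence degenerates, giving in particular
$$0\to\Hom_{D^+(k)}(M,\Delta[-1])\to\Hom_{D^+(k)}(M,\Delta_n)\xrightarrow{\ \psi_*\ }\Hom_{D^+(k)}(M,\Delta)\to 0.$$
So $\lambda_M$ is bijective once $\Hom_{D^+(k)}(M,\Delta[-1])=0$. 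For this, $\Delta\in D^{\geq-1}(k)$ gives $\Delta[-1]\in D^{\geq 0}(k)$ with $\CH^0(\Delta[-1])=\bk[X]^{\times}/\bk^{\times}$; applying $\Hom_{D^+(k)}(M,-)$ to the truncation triangle $(\bk[X]^{\times}/\bk^{\times})[0]\to\Delta[-1]\to\tau_{\geq 1}(\Delta[-1])\xrightarrow{+1}$ yields an exact sequence whose left term $\Hom_{\Gamma_k}(M,\bk[X]^{\times}/\bk^{\times})$ vanishes ($M$ is torsion while $\bk[X]^{\times}/\bk^{\times}$ is torsion-free, by Rosenlicht's unit theorem) and whose right term $\Hom_{D^+(k)}(M,\tau_{\geq 1}(\Delta[-1]))$ vanishes for degree reasons; hence the middle term is $0$.

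I do not expect a serious difficulty; the delicate points are bookkeeping. One must check that the chain of identifications --- the anti-equivalence of $k$-groups of multiplicative type with $\Gamma_k$-modules, the adjunction $\Hom_{D^+(k)}(M,\tau_{\leq 0}C)\cong\Hom_{D^+(k)}(M,C)$ for $M\in D^{\leq 0}(k)$, and the relation $\psi_{\leq 0}=\tau_{\leq 0}\psi$ --- genuinely carries the abstract map $\psi_*$ to the concrete operation of composing with $\psi_{\leq 0}$, and that the Harari--Skorobogatov sequence \eqref{BiD2e1} is functorial in $S$ in the precise form $\chi(\phi_*\xi)=\chi(\xi)\circ\phi^*$. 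One should also record at the outset that a group of multiplicative type killed by $n$ is finite, so that $S^*$ is a finite $\Gamma_k$-module and $\lambda_M$ applies. Everything else is a direct diagram chase relying only on the cohomology sheaves of $\Delta$ and $\Delta_n$ recalled above together with the exactness of \eqref{BiD2e1}.
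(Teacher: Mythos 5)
Your proposal is correct and follows essentially the same route as the paper: reduce via the Harari--Skorobogatov sequence \eqref{BiD2e1} to producing a unique $\phi^*$ with $\chi([Y])=\psi_{\leq 0}\circ\phi^*$, identify $\Hom_{D^+(k)}(S^*,S_X^*)$ with $\Hom_{D^+(k)}(S^*,\Delta_n)$ and $\Hom_{D^+(k)}(S^*,\tau_{\leq 0}\Delta)$ with $\Hom_{D^+(k)}(S^*,\Delta)$, use the Kummer triangle together with $n\cdot S^*=0$, and kill $\Hom_{D^+(k)}(S^*,\Delta[-1])=\Hom_k(S^*,\bk[X]^{\times}/\bk^{\times})=0$. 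Your write-up merely makes explicit a few steps the paper leaves implicit (the functoriality $\chi(\phi_*\xi)=\chi(\xi)\circ\phi^*$ and the uniqueness chase), so there is nothing to add.
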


\begin{proof}
Le triangle $\Delta_n\to \Delta \stackrel{n \cdot }{\to} \Delta \stackrel{+1}{\to}$ induit une suite exacte
$$\Hom_{D^+(k)}(S^*,\Delta[-1])\to \Hom_{D^+(k)}(S^*,\Delta_n)\to \Hom_{D^+(k)}(S^*,\Delta) \xrightarrow{n\cdot }\Hom_{D^+(k)}(S^*,\Delta).$$
Puisque $S^*\in D^{\leq 0}(k)$ et $n\cdot S^*=0$, on a
$$\Hom_{D^+(k)}(S^*,\Delta[-1])=\Hom_k(S^*,\CH^{-1}(\Delta))=\Hom_k(S^*,\bk[X]^{\times}/\bk^{\times})=0$$
et donc $\Hom_k(S^*,S_X^*) $ est isomorphe \`a 
$$\Hom_{D^+(k)}(S^*,S_X^*) \iso \Hom_{D^+(k)}(S^*,\Delta_n)\iso \Hom_{D^+(k)}(S^*,\Delta)\stackrel{\sim}{\leftarrow}  \Hom_{D^+(k)}(S^*,\tau_{\leq 0}\Delta) .$$
Alors $\chi([Y])\in \Hom_{D^+(k)}(S^*,\tau_{\leq 0}\Delta)$ donne un homomorphisme $\phi^*\in \Hom_k(S^*,S_X^*)$, et donc $\chi([Y])=\psi_{\leq 0}\circ \phi^*$.
Soit $\phi: S_X\to S$ l'homomorphisme correspondant. La suite exacte (\ref{BiD2e1}) implique l'\'enonc\'e.
\end{proof}

L'homomorphisme $\phi$ dans la proposition \ref{BiDprop2.1} est appel\'e \emph{le n-type de $[Y]$}.

Soit $\CT_X$ le torseur universel de $n$-torsion pour $X_{k_s}$, on obtient un isomorphisme de $\Gamma_k$-modules:
\begin{equation}\label{BiDprop2.1e3}
\tau_{X,S}: \Hom_{k_s}(S_X,S)\cong Hom_{k_s}(S^*,S_X^*)\to H^1(X_{k_s},S): \phi\mapsto \phi_*([\CT_X]).
\end{equation}
En particulier, on a deux $\Gamma_k$-isomorphismes naturels
\begin{equation}\label{BiDprop2.1e}
\tau_{X}:=\tau_{X,\mu_n}: S_X^*\iso H^1(X_{k_s},\mu_n): \phi\mapsto \phi_* (\CT_X)
\end{equation}
et
\begin{equation}\label{BiDprop2.1e1}
\tau_{X}(-1):=\tau_{X,\BZ/n}: \Hom_{k_s}(S_X,\BZ/n)\iso H^1(X_{k_s},\BZ/n): \phi\mapsto \phi_* (\CT_X).
\end{equation}
En fait, par d\'efinition, $\tau_X$ est exactement l'homomorphisme $S_X^*\to \CH^0(\Delta_n)$ induit par $\psi_{\leq 0}$.

Rappelons que, pour tous $F_1,F_2\in D^b(k)$, le produit tensoriel $F_1\otimes^LF_2$ est bien d\'efini et
 \emph{le cup-produit} est le homomorphisme canonique
$$\cup_j: \oplus_{r+s=j}  \CH^r(F_1)\otimes \CH^s(F_2) \to \CH^j(F_1\otimes^LF_2)$$
induit par la suite spectrale de Godement  (cf. \cite[Lem. VI. 8.6]{Mi80} ou \cite[Prop. 6.4.12]{Fu}).
De plus, $Rp_*\mu_n\in D^b(k)$ (\cite[Cor. 7.5.6]{Fu}).

\begin{cor}\label{BiDcor2.1.3}
Supposons que $k$ est s\'eparablement clos.
Soit $p:X\to \Spec\ k$ une vari\'et\'e int\`egre. Alors

(1) le cup-produit $\cup: H^1(X,\mu_n)\otimes \Hom(\mu_n,S)\to H^1(X,S): (\alpha,\varphi)\mapsto \varphi_*(\alpha)$
 est un isomorphisme;
 
 (2) pour tout complexe de $\BZ/n$-modules (vus comme $k$-faisceaux) de type fini $F$ avec $F\in D^{\geq 0}(k)\cap D^b(k)$,
on a $Rp_*\mu_n\otimes^L F\in D^{\geq 0}(k)$ et le cup-produit 
 $$\cup_j(F): \oplus_{r+s=j} R^rp_*\mu_n\otimes \CH^s(F)\cong \oplus_{r+s=j}  \CH^r(Rp_*\mu_n)\otimes \CH^s(F) \to \CH^j(Rp_*\mu_n\otimes^L F) $$
  est un isomorphisme pour $j=0$ et $j=1$;
 
 (3) dans (2), si $\CH^0(F)$ est plat, alors $\cup_2(F)$ est un isomorphisme.
\end{cor}

\begin{proof}
Puisque $X(k)\neq\emptyset$, il existe un torseur universel de $n$-torsion $\CT_X\to X$. D'apr\`es (\ref{BiDprop2.1e3}), on a le diagramme
$$\xymatrix{\Hom(S_X,\mu_n)\otimes \Hom(\mu_n,S) \ar[r]_-{\cong}^-{-\circ -} \ar[d]^{\tau_X\otimes id}_{\cong} &\Hom(S_X,S)\ar[d]^{\tau_{X,S}}_{\cong}\\
H^1(X,\mu_n)\otimes \Hom(\mu_n,S)\ar[r]^-{\cup} &H^1(X,S),
}$$
o\`u $-\circ -: (\psi,\phi)\mapsto \phi\circ \psi$. 
Ce diagramme est commutatif car $$\tau_{X,S}(\varphi\circ \phi)=(\varphi\circ \phi)_*[\CT_X ] = \varphi_*(\phi_*[\CT_X])  \stackrel{(\ref{BiDprop2.1e})}{=} \tau_X[\phi]\cup \varphi$$ 
pour tout $\phi\in \Hom(S_X,\mu_n)$ et tout $\varphi\in  \Hom(\mu_n,S)$. Donc on a (1).

Pour tout complexe $F$ dans (2), puisque la dimension cohomologique de $Rp_*$ est finie  (\cite[XIV]{SGA4}, cf. \cite[Cor. 7.5.6]{Fu}), on a:

(i) d'apr\`es \cite[XVII. Thm. 5.2.11]{SGA4}, pour tout $j<0$, on a $\CH^j(Rp_*\mu_n\otimes^L F)=0$ et donc $Rp_*\mu_n\otimes^L F\in D^{\geq 0}(k)$.
Ceci implique le premier \'enonc\'e de (2);

(ii) si $F\cong \CH^0(F)$ avec $\CH^0(F)$ plat, on a $\CH^j(Rp_*\mu_n\otimes^LF)=\CH^j(Rp_*\mu_n)\otimes F$ et donc $\cup_j(F)$ est un isomorphisme pour tout $j$;

(iii) si $F\cong \CH^0(F)$, on a  $Rp_*\mu_n\otimes^LF\cong Rp_*(\mu_n\otimes p^*F)$  (\cite[XVII. (5.2.11.1)]{SGA4}, cf. \cite[Cor. 6.5.6]{Fu}).
Puisque $X$ est int\`egre, $\cup_0(F): \mu_n\otimes F\to R^0p_*(\mu_n\otimes p^*F)$ est un isomorphisme, et d'apr\`es l'\'enonc\'e (1) et \cite[Prop. V.1.20]{Mi80}, le cup-produit
$$\cup_1(F): R^1p_*\mu_n\otimes F\cong H^1(X,\mu_n)\otimes \Hom(\mu_n,\mu_n\otimes F)\xrightarrow{\cup}H^1(X,\mu_n\otimes F)\cong \CH^1(Rp_*(\mu_n\otimes p^*F) ) $$
est un isomorphisme.
Ceci vaut seulement pour le cup-produit de degr\'e $1$.

Pour tout complexe $F$ dans (2), notons $F_+:=(\tau_{\geq 1}F)[1]$ un objet dans $D^{\geq 0}(k)\cap D^b(k)$. 
Alors $F_+$ v\'erifie toutes les hypoth\`eses dans (2).

Le triangle $\CH^0(F)\to F\to F_+[-1]\xrightarrow{+1}$ donne un diagramme commutatif de suites exactes:
\begin{equation}\label{BiDcor2.1.3e1} 
 \xymatrixcolsep{0.8pc}
 \xymatrix{0 \ar[r] & R^j(\CH^0(F))\ar[r]\ar[d]^{\cup_j(\CH^0(F))}  & 
 \oplus_{r+s=j} R^r( \CH^s(F))\ar[r]\ar[d]^{\cup_j(F)} & \oplus_{r+s=j-1} R^r(\CH^s(F_+)) \ar[r]\ar[d]^{\cup_{j-1}(F_+)}& 0\ar[d]\\
H_{\mu_n}^{j-2}(F_+)\ar[r]^-{\theta_{j-1}}&H_{\mu_n}^j(\CH^0(F))\ar[r] &H_{\mu_n}^j( F)\ar[r]&H_{\mu_n}^{j-1}(F_+)\ar[r]^-{\theta_j}  &H_{\mu_n}^{j+1}(\CH^0(F))
}\end{equation}
o\`u $H_{\mu_n}^j(-):=\CH^j(Rp_*\mu_n\otimes^L-)$, $R^{r}(-):=R^rp_*\mu_n\otimes -$ et la premi\`ere ligne est exacte car elle est scind\'ee.

Montrons l'\'enonc\'e (2).
D'apr\`es (i), on a: $H_{\mu_n}^{-2}(F_+)=H_{\mu_n}^{-1}(F_+)=0$.
D'apr\`es (iii), $\cup_0(\CH^0(F))$ est un isomorphisme. 
 Le lemme des cinq implique: $\cup_0(F)$ est un isomorphisme.
 Ceci donne l'\'enonc\'e(2) pour $j=0$.
 Donc  $\cup_0(F_+)$ est un isomorphisme. 
 D'apr\`es (iii), $\cup_1(\CH^0(F))$ est un isomorphisme. 
 Le lemme des cinq implique: $\cup_1(F)$ est un isomorphisme.

Montrons l'\'enonc\'e (3).
Par hypoth\`ese, $\CH^0(F)$ est plat, et d'apr\`es (ii), $\cup_2(\CH^0(F))$ est un isomorphisme.
D'apr\`es (2), $\cup_1(F_+)$ et $\cup_0(F_+)$ sont des isomorphismes.
 Donc $\theta_1=0$ et le lemme des cinq implique: $\cup_2(F)$ est un isomorphisme.
\end{proof}

Si $X$ est lisse, d'apr\`es (\ref{BiDprop2.2e}), l'isomorphisme (\ref{BiDprop2.1e1}) donne un $\Gamma_k$-isomorphisme naturel
\begin{equation}\label{BiDprop2.1e2}
\tau_{X}(-1): \Hom(S_X,\BZ/n)\iso H^1(X_{k_s},\BZ/n)\cong \Hom_{cont}(\pi_1(X_{k_s})^{ab},\BZ/n):\   \psi\mapsto \psi(k_s) \circ \tau_{\pi_1},
\end{equation}
o\`u $\tau_{\pi_1}: \pi_1(X_{k_s})^{ab}\to S_X(k_s)$ est l'homomorphisme induit par $\CT_X$. 
Ainsi $\tau_{\pi_1}$ induit un isomorphisme de $\Gamma_k$-modules $\pi_1(X_{k_s})^{ab}/n\iso S_X(k_s)$ et $\CT_X$ est g\'eom\'e\-triquement int\`egre.

\begin{cor}\label{BiDcor2.1.1}
Soit $X$ une vari\'et\'e lisse g\'eom\'e\-triquement int\`egre.
Soient $M$ un $\BZ/n$-module 
et $\pi_1(X_{k_s})^{ab}\xrightarrow{\theta} M$ un homomorphisme surjectif de noyau $\Gamma_k$-invariant.
Supposons qu'il existe un torseur universel de $n$-torsion pour $X$.
Alors il existe un $k$-groupe fini commutatif $S$ et un $S$-torseur $\CT\to X$ tels que
$\CT$ soit lisse g\'eom\'e\-triquement int\`egre, $S(k_s)=M$ et 
que, dans $H^1(X_{k_s},S)\cong \Hom_{cont}(\pi_1(X_{k_s})^{ab},M)$, on ait $[\CT_{k_s}]=\theta$.
\end{cor}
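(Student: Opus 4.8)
The plan is to realize $\CT$ as a pushforward of the universal torseur of $n$-torsion along a quotient of finite $k$-groups. Fix a universal torseur of $n$-torsion $\CT_X$ for $X$; it is an $S_X$-torseur, and by the discussion preceding the statement the $\Gamma_k$-equivariant homomorphism $\tau_{\pi_1}\colon\pi_1(X_{\bk})^{ab}\to S_X(\bk)$ induced by $\CT_X$ is surjective with kernel $n\,\pi_1(X_{\bk})^{ab}$. Since $M$ is a $\BZ/n$-module we have $n\,\pi_1(X_{\bk})^{ab}\subseteq\Ker\theta$, so $\theta$ factors as $\theta=\psi\circ\tau_{\pi_1}$ for a unique surjective homomorphism of abstract groups $\psi\colon S_X(\bk)\to M$. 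First I would put the quotient $\Gamma_k$-action on $M$: the subgroup $\tau_{\pi_1}(\Ker\theta)\subseteq S_X(\bk)$ is $\Gamma_k$-stable because $\Ker\theta$ is $\Gamma_k$-invariant and $\tau_{\pi_1}$ is $\Gamma_k$-equivariant, and $\psi$ identifies $M$ with $S_X(\bk)/\tau_{\pi_1}(\Ker\theta)$; for this action $\psi$ becomes a morphism of $\Gamma_k$-modules.

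Since $\mathrm{char}\,k=0$, there is a finite commutative $k$-group $S$ with $S(\bk)=M$ for the above $\Gamma_k$-action, and the surjection $\psi$ corresponds to a surjective $k$-homomorphism $S_X\to S$. I then set $\CT:=\CT_X\times^{S_X}S$, the contracted product, which is an $S$-torseur over $X$. As $S$ is finite \'etale over $k$, the structure morphism $\CT\to X$ is finite \'etale, hence $\CT$ is smooth over $k$ because $X$ is.

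It remains to identify $[\CT_{\bk}]$ and to check geometric integrality. Under the canonical identification $H^1(X_{\bk},S)\cong\Hom_{cont}(\pi_1(X_{\bk})^{ab},M)$ coming from \eqref{BiDprop2.2e} (the conjugation action being trivial as $S$ is abelian), the pushforward operation $[\CT_{X,\bk}]\mapsto[\CT_{\bk}]$ is post-composition with $\psi$; since $[\CT_{X,\bk}]$ corresponds to $\tau_{\pi_1}$, this gives $[\CT_{\bk}]=\psi\circ\tau_{\pi_1}=\theta$. As $\theta$ is surjective, the $S$-torseur $\CT_{\bk}\to X_{\bk}$ is connected (its class on $\pi_1(X_{\bk})$ being onto), and being moreover smooth over $\bk$ it is integral; hence $\CT$ is geometrically integral. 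The argument is essentially bookkeeping with the constructions of \S 2; the only points requiring care are the $\Gamma_k$-equivariant descent of $\psi$ to a $k$-homomorphism $S_X\to S$ and the compatibility of torsor pushforward with the identification \eqref{BiDprop2.2e}, which is where I would be most careful.
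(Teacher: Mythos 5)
Your proposal is correct and follows essentially the same route as the paper: factor $\theta$ through $\tau_{\pi_1}\colon\pi_1(X_{\bk})^{ab}\twoheadrightarrow S_X(\bk)$ (possible since $M$ is $n$-torsion and $\Ker\tau_{\pi_1}=n\,\pi_1(X_{\bk})^{ab}$), descend the resulting surjection to a $k$-homomorphism $\theta'\colon S_X\to S$ using the $\Gamma_k$-invariance of $\Ker\theta$, and take $\CT:=\theta'_*\CT_X=\CT_X\times^{S_X}S$. The only difference is that you spell out the verification of $[\CT_{\bk}]=\theta$ and the geometric integrality (connectedness from surjectivity of $\theta$ plus smoothness), which the paper leaves implicit.
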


\begin{proof}
Soit $\CT_X$ un torseur universel de $n$-torsion pour $X$ (un torseur sous le $k$-groupe $S_X$).
Puisque $\Ker(\theta)$ est $\Gamma_k$-invariant, 
il existe une unique $\Gamma_k$-structure sur $M$ telle que $\theta$ soit un $\Gamma_k$-morphisme.
Ceci induit un $k$-groupe commutatif $S$ et un homomorphisme surjectif $\theta': S_X\to S$ 
tels que $S(k_s)=M$ et que $\theta'(k_s)\circ \tau_{\pi_1}=\theta$. 
Alors $\CT:=\theta'_*\CT_X:=\CT_X\times^{S_X}S $ donne l'\'enonc\'e.
\end{proof}

\bigskip

Soient $U$, $V$ deux vari\'et\'es g\'eom\'e\-triquement int\`egres sur $k$.
On consid\`ere le diagramme commutatif
\begin{equation}\label{BiDdiagprodu}
\xymatrix{U\times_kV \ar[r]^-{p_2}\ar[d]^{p_1} & V\ar[d]^{q_2}\\
U\ar[r]^-{q_1}& \Spec\ k.
}
\end{equation}

Soient $M$, $N$ deux $\BZ/n$-faisceaux finis plats sur le grand site de $k$.
  Le cup-produit donne un quasi-isomorphisme (\cite[Th. finitude Cor. 1.11]{SGA4.5}, cf. \cite[Cor. 9.3.5]{Fu}):
 \begin{equation}\label{BiDdiagprodue1}
 \cup: \ Rq_{1,*}M\otimes^L Rq_{2,*}N \cong  R(q_1\circ p_1)_* (M\otimes^L N).
 \end{equation}
Ceci induit le cup-produit (\cite[Prop. 6.4.12]{Fu}):
$$\cup_j: \oplus_{r+s=j} R^rq_{1,*}M\otimes_{\BZ/n} R^sq_{2,*}N \to \CH^j(Rq_{1,*}M\otimes^L Rq_{2,*}N) \iso R^j(q_1\circ p_1)_* (M\otimes^L N).$$

 \begin{lem}\label{BiDlem2.3.3}
 Le cup-produit $\cup_j$ est un isomorphisme pour $j=0,1,2$.
 \end{lem}
  
  \begin{proof}
  On peut supposer que $k$ est s\'eparablement clos. 
  Les $\BZ/n$-modules finis $M$, $N$ sont plats et donc ils sont des facteurs directs de $(\BZ/n)^{\oplus i}$ pour $i$ assez grand.  
  Puisque tous les foncteurs ci-dessus commutent avec  les sommes directs finies,  on peut supposer que $M=N=\mu_n$.
L'\'enonc\'e d\'ecoule du corollaire \ref{BiDcor2.1.3} (3) et de (\ref{BiDdiagprodue1}).
 \end{proof}

 Le r\'esultat ci-dessous g\'en\'eralise \cite[Thm. 2.6]{SZ}.
 
\begin{prop}\label{BiDlem2.3.1}
Supposons que $k$ est s\'eparablement clos.
Soient $U$, $V$ deux vari\'et\'es g\'eom\'e\-triquement int\`egres et $F$ un $\BZ/n$-module fini plat.
On consid\`ere le diagramme (\ref{BiDdiagprodu}).
Alors on a des isomorphismes naturels:
$$(p_1^*,p_2^*):\ H^1(U,F)\oplus H^1(V,F)\iso H^1(U\times V,F)$$
et 
$$(p_1^*,\cup,p_2^*):
 H^2(U,F)\oplus [H^1(U, \BZ/n)\otimes_{\BZ}H^1(V,F)]\oplus H^2(V,F)\iso H^2(U\times V,F),$$
 o\`u $\cup: H^1(U, \BZ/n)\otimes_{\BZ}H^1(V,F) \to H^2(U\times V,F)$ est le cup-produit.
\end{prop}

C'est clair que si $U,V$ sont d\'efinis sur un sous-corps $k_0\sbt k$ avec $k/k_0$ galoisienne et $F$ un $\Gal(k/k_0)$-module, 
alors les deux isomorphismes ci-dessus sont des isomorphismes de $\Gal(k/k_0)$-modules.

Si $\mathrm{char}(k)=0$, cette proposition d\'ecoule de  \cite[Prop. 2.2]{SZ} et de \cite[Thm. III.3.12]{Mi80} 
(on peut v\'erifier que l'homomorphisme dans \cite[Prop. 2.2]{SZ} est compactible avec le cup-produit).

\begin{proof}
On applique le lemme \ref{BiDlem2.3.3} au cas $U\times k$ et au cas $k\times V$, et on obtient deux diagrammes commutatifs:
$$\xymatrix{H^i(U,\BZ/n)\otimes H^0(k,F)\ar[r]_-{\cong}^-{\cup_U}\ar[d]^{\cong}_{id\times q_2^*}& H^i(U,F)\ar[d]^{p_1^*} & 
H^0(k,\BZ/n)\otimes H^i(V,F)\ar[r]_-{\cong}^-{\cup_V}\ar[d]^{\cong}_{q_1^*\times id}& H^i(V,F)\ar[d]^{p_2^*}\\
H^i(U,\BZ/n)\otimes H^0(V,F)\ar[r]^-{\cup_i} &H^i(U\times V,F)&   H^0(U,\BZ/n)\otimes H^i(V,F)\ar[r]^-{\cup_i} &H^i(U\times V,F) 
}$$
pour $i=1$ et $2$, o\`u $\cup_U$ (resp. $\cup_V$) est le cup-produit sur $U$ (resp. $V$).
Donc $$p_1^*(H^i(U,F) )=\cup_i( H^i(U,\BZ/n)\otimes H^0(V,F))\ \ \ \text{et}\ \ \ p_2^*(H^i(V,F) )=\cup_i( H^0(U,\BZ/n)\otimes H^i(V,F)).$$
L'\'enonc\'e d\'ecoule du lemme \ref{BiDlem2.3.3}.
\end{proof}

Soient $\CT_U$ (resp. $\CT_V$) un torseur universel de $n$-torsion pour $U$ (resp. pour $V$) et $S_U$ (resp. $S_V$) le groupe correspondant (cf. D\'efinition \ref{BiDdef2.20}).
Skorobogatov et Zarhin introduisent un homomorphisme (\cite[\S 5]{SZ}):
\begin{equation}\label{BiDvarepsilion}
\varepsilon: \Hom_k(S_U,S_V^*)\to H^2(U\times V,\mu_n): \phi\mapsto \phi_*[\CT_U]\cup [\CT_V],
\end{equation}
o\`u $\cup$ est le cup-produit $H^1(U,S_V^*)\times H^1(V, S_V)\to H^2(U\times V,\mu_n)$.
Les isomorphismes $\tau_V$ dans (\ref{BiDprop2.1e}) et $\tau_U(-1)$ dans (\ref{BiDprop2.1e1}) donnent un diagramme:
\begin{equation}\label{BiDvarepsiliondia}
\xymatrix{(\Hom_{k_s}(S_U,\BZ/n)\otimes S_V^*)^{\Gamma_k}\ar[r]^-{=}_-{\Phi}\ar[rd]^{\sim}_{(\tau_U(-1),\tau_V)}&
\Hom_k(S_U,S_V^*)\ar[r]^{\varepsilon}&H^2(U\times V,\mu_n)\ar[d]\\
&(H^1(U_{k_s},\BZ/n)\otimes H^1(V_{k_s},\mu_n))^{\Gamma_k}\ar[r]^-{\cup}&H^2((U\times V)_{\bk},\mu_n),
}
\end{equation}
qui est commutatif parce que, pour tous $\varphi\in \Hom_{k_s}(S_U,\BZ/n)$ et $\phi\in S_V^*=\Hom_{k_s}(S_V,\mu_n)$, on note $\phi^*:=\Hom_{k_s}(\phi, \mu_n):  \BZ/n\to S_V^*$ le dual de $\phi$, 
et on a:
$$ \varepsilon(\Phi(\varphi\otimes \phi))=\varepsilon(\phi^*\circ \varphi)=(\phi^*)_*( \varphi_*[\CT_U])\cup [\CT_V]  \stackrel{(1)}{=} \varphi_*[\CT_U]\cup \phi_*[\CT_V] =\tau_U(-1)(\varphi)\cup \tau_V(\phi),$$
o\`u (1) d\'ecoule du diagramme commutatif
$$\xymatrix{ H^1(U\times V,S_V)\ar[d]^{\phi_*}\ar@{}[r]|-{\times}&H^1(U\times V, \Hom_{k_s}(S_V,\mu_n)) \ar[r]^-{\cup}& H^2(U\times V,\mu_n)\ar[d]^=  \\
H^1(U\times V,\mu_n)\ar@{}[r]|-{\times} & H^1(U\times V,  \Hom_{k_s}(\mu_n ,\mu_n))\ar[r]^-{\cup}\ar[u]^{(\phi^*)_*=\Hom_{k_s}(\phi,\mu_n)_*}   &         H^2(U\times V,\mu_n).
}$$

Si $U(k)\neq\emptyset$, alors il existe un torseur universel de $n$-torsion pour $U$.
Pour un point $u\in U(k)$,
notons 
$$H^i_u(U,\mu_n):=\Ker (H^i(U,\mu_n)\xrightarrow{u^*}H^i(k,\mu_n)).$$

\begin{cor}\label{BiDlem2.3.2}
Sous les notations et hypoth\`eses ci-dessus,
supposons que $U(k)\neq\emptyset$ avec $u\in U(k)$ et qu'il existe des torseurs universels de $n$-torsion $\CT_U$ pour $U$ (sous le groupe $S_U$) et $\CT_V$ pour $V$ (sous le groupe $S_V$).
Alors on a un isomorphisme:
$$H^2_u(U,\mu_n)\oplus H^2(V,\mu_n)\oplus \Hom_k(S_U,S_V^*)\xrightarrow{(p_1^*,p_2^*,\varepsilon) }H^2(U\times V,\mu_n).$$
\end{cor}

\begin{proof}
Notons $E_2^{i,j}(U):= H^i(k,H^j(U_{k_s},\mu_n))\Rightarrow H^{i+j}(U,\mu_n)$ la  suite spectrale de Hochschild-Serre de $U$ et $E_2^{i,j}(V)$ (resp. $E^{i,j}_2(U\times V)$) celle de $V$ (resp. de $U\times V$). 

Notons $H^i_u(U_{k_s},\mu_n):=\Ker (H^i(U_{k_s},\mu_n)\xrightarrow{u^*}H^i(k_s,\mu_n))$.
Alors $H^0_u(U_{k_s},\mu_n)=0$ et $H^i_u(U_{k_s},\mu_n)= H^i(U_{k_s},\mu_n)$ pour $i\neq 0$.
La suite spectrale de Hochschild-Serre donne canoniquement une suite spectrale:
$$E_2^{i,j}(U,u):= H^i(k,H^j_u(U_{k_s},\mu_n))\Rightarrow H^{i+j}_u(U,\mu_n).$$
Soit $\phi^{i,j}_2: E_2^{i,j}(U,u)\oplus E_2^{i,j}(V)\to E^{i,j}_2(U\times V)$  le morphisme de suites spectrales induit par $(p_1^*,p_2^*)$.
D'apr\`es la proposition \ref{BiDlem2.3.1},  $\phi^{i,j}_2$ est un isomorphisme pour $j=0,1$
 et $\phi^{0,2}_2$ est injectif. 
Ainsi $\phi_2^{i,j}$ induit une suite exacte par le lemme des cinq:
$$0\to H^2_u(U,\mu_n)\oplus H^2(V,\mu_n)\xrightarrow{p_1^*,p_2^*}H^2(U\times V,\mu_n)\to \coker(\phi^{0,2}_2).$$
D'apr\`es  la proposition \ref{BiDlem2.3.1} et le diagramme (\ref{BiDvarepsiliondia}), on a
 $\coker(\phi^{0,2}_2)\cong (H^1(U_{\bk},\BZ/n)\otimes H^1(V_{\bk},\mu_n))^{\Gamma_k}$
 et la composition  
 $$\Hom_k(S_U,S_V^*)\xrightarrow{\varepsilon} H^2(U\times V,\mu_n)\to H^2((U\times V)_{\bk},\mu_n)^{\Gamma_k}\to \coker(\phi^{0,2}_2) $$
  est un isomorphisme, d'o\`u le r\'esultat.
\end{proof}

\section{Pr\'eliminaires sur les torseurs sous un groupe fini}

Dans toute cette section,  $k$ est un corps quelconque de caract\'eristique $0$. 
Sauf  mention explicite du contraire,  une vari\'et\'e est une $k$-vari\'et\'e.

Soit $G$ un groupe alg\'ebrique connexe et $X$ une $G$-vari\'et\'e lisse g\'eom\'e\-triquement int\`egre.
Cette section traite trois probl\`emes: 
pour un torseur $H\to G$ sous un $k$-groupe fini, on montre l'existence et l'unicit\'e de la structure de groupe sur $H$ dans \S \ref{3.1};
pour un torseur $Y\to X$ sous un $k$-groupe fini, on donne dans \S \ref{3.2} 
une condition n\'ecessaire et suffisante pour le rel\`evement, de fa\c{c}on compatible, de l'action de $G$ sur $X$ en une action sur $Y$;
si ce rel\`evement n'existe pas, on montre dans \S \ref{3.3} l'existence d'une isog\'enie minimale $H_Y\to G$ telle que l'action de $H_Y$ puisse \^etre relev\'ee en une action sur $Y$.

\subsection{Torseur sur un groupe alg\'ebrique}\label{3.1}

Pour un  groupe alg\'ebrique connexe $G$, tout recouvrement \'etale fini de $G_{\bk}$ est une extension centrale de $G_{\bk}$ (Brion et Szamuely \cite[Prop. 1.1 (1)]{BS}).
Le r\'esultat suivant g\'en\'eralise ce r\'esultat au corps de base et il est aussi un analogue d'un r\'esultat de Colliot-Th\'el\`ene (\cite[Thm. 5.6]{CT07}).

\begin{prop}\label{BiDcorprop2.1}
Soit $G$ un groupe alg\'ebrique connexe, $S$ un $k$-groupe fini commutatif et $\psi: H\to G$ un $S$-torseur avec $H$ g\'eom\'e\-triquement int\`egre sur $k$.
S'il existe un point $e_H\in H(k)$ avec $\psi(e_H)=e_G$, alors il existe une unique structure de $k$-groupe alg\'ebrique sur $H$
 telle que $\psi$ soit un homomorphisme et que $e_H$ soit l'unit\'e.
 
 De plus, dans ce cas, $\Ker(\psi)=S$ et l'action de $S$ sur $H$ est compatible avec la multiplication de $H$.
\end{prop}

\begin{proof}
 L'existence d'une structure de groupe sur $H$ est \'equivalente \`a l'existence d'un couple de morphismes $(m_H,i_H)$  satisfaisant certaines relations
o\`u $m_H: H\times H\to H$ est la multiplication et $i_H: H\to H$ est l'inverse.

Pour l'unicit\'e, s'il existe deux structures de groupe sur $H$, soient $(m_H,i_H)$, $(m'_H,i'_H)$ les couples de morphismes correspondants. 
Soient $m_G$ la multiplication de $G$ et $i_G$ l'inverse de $G$.
Alors $$\psi\circ m_H=m_G\circ (\psi\times \psi)=\psi\circ m'_H,\ \ \ m_H(e_H\times e_H)=e_H=m'_H(e_H\times e_H)$$
et
$$\psi\circ i_H=i_G\circ \psi=\psi\circ i'_H,\ \ \ i_H(e_H)=e_H=i'_H(e_H).$$
Puisque $\psi$ est fini \'etale et $H\times H$ est int\`egre, on a $m_H=m'_H$ et $i_H=i'_H$ (\cite[Cor. I.3.13]{Mi80}). 
 Ceci donne l'unicit\'e de $(m_H,i_H)$.

Pour l'existence de la structure de groupe (i.e. l'existence de $(m_H, i_H)$), par la descente galoisienne et l'unicit\'e de $(m_H,i_H)$, il suffit d'\'etablir l'existence de $(m_H,i_H)$ sur $\bk$.
On peut supposer que $k=\bk$.
Dans ce cas, $\psi$ est fini \'etale galoisien avec $\Aut(H/G)\cong S(\bk)$.
D'apr\`es \cite[Prop. 1.1 (1)]{BS}, il existe une structure de groupe sur $H$ telle que $\psi: H\to G$ soit une isog\'enie centrale. 
Notons $-\cdot -$ la multiplication et $(-)^{-1}$ l'inverse de cette structure de groupe. 
Soit $c:=e_H\cdot e_H$ et $d: =e_H\cdot c^{-1}$.
Les points $e_H$, $c$ et $d$ sont dans $\Ker(\psi)$ et donc dans le centre de $H$.
Alors les morphismes $m'_H: H\times H\to H: (h_1,h_2)\mapsto d\cdot h_1\cdot h_2$ et $i'_H: H\to H: h\mapsto c\cdot h^{-1}$ d\'efinissent sur
$H$ une nouvelle structure de groupe et cette structure v\'erifie les hypoth\`eses ci-dessus.

Pour le dernier \'enonc\'e, puisque $S\sbt \Ker(\psi)$,
l'action de $S$ induit une inclusion de $\Gamma_k$-module $S(\bk)\sbt \Aut(H_{\bk}/G_{\bk})$  
et la multiplication de $H$ induit une inclusion $\Ker(\psi)(\bk)\sbt \Aut(H_{\bk}/G_{\bk})$ de $\Gamma_k$-module.
Puisque $\# \Aut(H_{\bk}/G_{\bk})=\deg(\psi)$, les deux inclusions ci-dessus sont isomorphes, d'o\`u le r\'esultat.
\end{proof}

\begin{cor}\label{BiDcor2.1}
Soit $G$ un groupe alg\'ebrique connexe.
Pour tout $\BZ/n$-module fini $M$ et 
tout homomorphisme surjectif $\pi_1(G_{\bk})\xrightarrow{\theta} M$ de noyau $\Gamma_k$-invariant, 
il existe un unique groupe alg\'ebrique connexe $H$ isog\`ene \`a $G$, i.e. muni d'un homomorphisme fini surjectif $\psi: H\to G$,
tel que $(\Ker(\psi))(\bk)\cong M$ et que 
la composition $\pi_1(H_{\bk})\xrightarrow{\psi_{\pi_1}} \pi_1(G_{\bk})\xrightarrow{\theta} M$ soit nulle.

De plus, pour tout groupe alg\'ebrique connexe $H_1$, tout homomorphisme fini surjectif $\psi_1: H_1\to G $ v\'erifiant $\theta\circ \psi_{1,\pi_1}=0$
se factorise par $\psi$.
\end{cor}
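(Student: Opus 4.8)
La strat\'egie est de construire $H$ d'abord sur $\bk$ via la correspondance de Galois pour les rev\^etements \'etales connexes (\cite{SGA1}), puis de redescendre \`a $k$ gr\^ace \`a la $\Gamma_k$-invariance de $\Ker(\theta)$; l'ingr\'edient qui fait tout fonctionner est la rigidit\'e des rev\^etements connexes point\'es (un morphisme entre rev\^etements connexes point\'es est unique). Sur $\bk$, soit $q\colon(\widetilde H,\widetilde e)\to(G_{\bk},e_G)$ le rev\^etement \'etale connexe point\'e associ\'e au sous-groupe ouvert $\Ker(\theta)\subseteq\pi_1(G_{\bk},e_G)$, d'indice $|M|$. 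Le point cl\'e --- et la seule v\'eritable difficult\'e --- est de munir $\widetilde H$ d'une structure de vari\'et\'e en groupes pour laquelle $q$ devient une isog\'enie: par la formule de K\"unneth $\pi_1(G_{\bk}\times G_{\bk})\cong\pi_1(G_{\bk})^2$ (valable en caract\'eristique~$0$) et l'argument d'Eckmann--Hilton, $\pi_1(G_{\bk})$ est ab\'elien et $m_*\colon\pi_1(G_{\bk})^2\to\pi_1(G_{\bk})$ est l'addition; donc $m_*\bigl(q_*\pi_1(\widetilde H)\times q_*\pi_1(\widetilde H)\bigr)=q_*\pi_1(\widetilde H)$, et le crit\`ere de rel\`evement des rev\^etements fournit un unique morphisme $\widetilde m\colon\widetilde H\times\widetilde H\to\widetilde H$ relevant $m\circ(q\times q)$ et fixant $(\widetilde e,\widetilde e)$; la rigidit\'e force alors l'associativit\'e, l'unit\'e et l'existence d'un inverse. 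On obtient ainsi une vari\'et\'e en groupes connexe $\widetilde H$, affine car finie sur $G_{\bk}$ affine, et $q$ est une isog\'enie de noyau central $\Ker(q)$; de plus $\Ker(q)(\bk)$ s'identifie au groupe de Galois $\pi_1(G_{\bk})/\Ker(\theta)=M$, et $q_*\pi_1(\widetilde H)=\Ker(\theta)$ donne $\theta\circ q_*=0$.

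Pour la descente, je remarquerais que pour tout $\tau\in\Gamma_k$ le rev\^etement conjugu\'e $\widetilde H^{\tau}$ est celui associ\'e \`a $\tau(\Ker\theta)=\Ker\theta$, d'o\`u un unique isomorphisme de rev\^etements point\'es $f_\tau\colon\widetilde H^{\tau}\iso\widetilde H$ au-dessus de $G_{\bk}$; la rigidit\'e donne la condition de cocycle, et cette donn\'ee de descente est effective car $\widetilde H$ est finie sur $G_{\bk}$. On obtient un $k$-groupe lin\'eaire connexe $H$ avec $H_{\bk}\cong\widetilde H$, un $k$-point $e_H:=\widetilde e\in H(k)$, un $k$-homomorphisme fini surjectif $\psi\colon H\to G$ relevant $q$, et un $k$-sous-groupe fini $\Ker(\psi)$ v\'erifiant $\Ker(\psi)(\bk)\cong M$ et $\theta\circ\psi_{\pi_1}=0$ --- les morphismes $\widetilde m$, l'inversion et l'inclusion $\Ker(q)\hookrightarrow\widetilde H$ \'etant caract\'eris\'es par des propri\'et\'es d'unicit\'e, ils sont automatiquement compatibles \`a la donn\'ee de descente et descendent. (Variante: appliquer le corollaire~\ref{BiDcor2.1.1} \`a $X=G$, dont le $\pi_1$ est ab\'elien et qui poss\`ede un torseur universel de $n$-torsion puisque $G(k)\neq\emptyset$, puis munir le torseur fini \'etale obtenu de la structure de groupe pr\'ec\'edente.)

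Enfin, l'unicit\'e et la propri\'et\'e universelle se traitent par le m\^eme proc\'ed\'e. Si $\psi_1\colon H_1\to G$ est un homomorphisme fini surjectif de groupes lin\'eaires connexes avec $\theta\circ\psi_{1,\pi_1}=0$, alors $\psi_{1,\pi_1}(\pi_1 H_{1,\bk})\subseteq\Ker\theta=\psi_{\pi_1}(\pi_1 H_{\bk})$, donc $H_{1,\bk}\to G_{\bk}$ se rel\`eve de mani\`ere unique en un morphisme point\'e $\lambda\colon H_{1,\bk}\to H_{\bk}$ au-dessus de $G_{\bk}$; l'unicit\'e des rel\`evements fait de $\lambda$ un homomorphisme, d\'efini sur $k$ (car $e_{H_1}$ et $e_H$ sont rationnels, donc $\lambda^{\tau}=\lambda$), ce qui factorise $\psi_1$ par $\psi$. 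Pour l'unicit\'e de $H$, si $H$ et $H'$ conviennent tous deux, les sous-groupes $\psi_{\pi_1}(\pi_1 H_{\bk})$ et $\psi'_{\pi_1}(\pi_1 H'_{\bk})$ de $\pi_1(G_{\bk})$ sont contenus dans $\Ker\theta$ et d'indice $|M|=[\pi_1(G_{\bk}):\Ker\theta]$, donc \'egaux \`a $\Ker\theta$; les deux rev\^etements s'identifient et le m\^eme argument de rel\`evement et de descente fournit un $k$-isomorphisme $H\iso H'$ au-dessus de $G$. Je m'attends \`a ce que tout le poids soit dans le lemme classique de structure de groupe sur un rev\^etement \'etale connexe point\'e d'une vari\'et\'e en groupes connexe; le reste n'est qu'une application r\'ep\'et\'ee de la rigidit\'e et de la correspondance de Galois.
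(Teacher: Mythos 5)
Votre preuve est correcte, mais elle suit un chemin sensiblement différent de celui de l'article. Vous construisez $H$ \og à la main\fg{} : revêtement connexe pointé de $G_{\bk}$ associé au sous-groupe $\Ker(\theta)$ via la correspondance de Galois, structure de groupe obtenue par le critère de relèvement (en utilisant que $\pi_1(G_{\bk})$ est abélien et que $m_*$ est l'addition) et par la rigidité des revêtements connexes pointés, puis descente galoisienne de $\bk$ à $k$ rendue possible par la $\Gamma_k$-invariance de $\Ker(\theta)$ et effective par affinité. L'article procède autrement : il applique le corollaire~\ref{BiDcor2.1.1} à $X=G$ (ce que vous mentionnez vous-même comme variante) pour obtenir directement un $S$-torseur $H\to G$ \emph{sur $k$} avec $S(\bk)=M$, tord pour trivialiser la fibre en $e_G$, puis invoque \cite[Thm.~5.6]{CT07} pour munir $H$ d'une structure de groupe — ce qui court-circuite à la fois votre lemme de structure de groupe sur un revêtement et votre étape de descente. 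Votre argument pour la propriété universelle (relèvement unique $\lambda$ pointé au-dessus de $G_{\bk}$, rigidité pour en faire un homomorphisme, rationalité par unicité) est essentiellement le même que celui de l'article, qui le formule via l'action de $S$ et le morphisme $\chi:H_1\times H_1\to S$ nul par connexité de $H_1$. Ce que chaque approche apporte : la vôtre est plus autonome et n'utilise pas que $M$ est un $\BZ/n$-module (elle vaut pour tout quotient fini continu de noyau $\Gamma_k$-invariant), au prix de refaire la descente et le lemme de structure de groupe ; celle de l'article réutilise la machinerie des torseurs universels de $n$-torsion déjà en place et délègue le point délicat à une référence. Seule vétille dans votre rédaction : il faut dire explicitement que $\theta$ est continu pour que $\Ker(\theta)$ soit ouvert (c'est implicite dans tout l'article, et automatique ici puisque $\pi_1(G_{\bk})$ est topologiquement de type fini), et préciser que la descente s'effectue sur une sous-extension galoisienne finie où tout est défini.
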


\begin{proof}
Puisque $G(k)\neq\emptyset$, il existe un unique torseur universel de $n$-torsion $\CT_G$ (un $S_G$-torseur sur $G$)
tel que $\CT_G|_{e_G}\cong S_G$.

D'apr\`es le corollaire \ref{BiDcor2.1.1}, il existe un $k$-groupe fini commutatif $S$ et un $S$-torseur $H\xrightarrow{\psi}G$
tels que $S(\bk)=M$ et que l'homomorphisme $\pi_1(G_{\bk})\to S(\bk) $ induit par $[H_{\bk}]$ soit $\theta$.
 Donc la composition $\theta\circ \psi_{\pi_1}$ est nulle.
 Apr\`es avoir tordu par un \'el\'ement de $H^1(k,S)$, on peut supposer que $[H]|_{e_G}=0\in H^1(k,S)$.
D'apr\`es la proposition \ref{BiDcorprop2.1}, il existe une structure de groupe sur $H$
 telle que $\psi$ soit un homomorphisme et que $\Ker(\psi)=S$.
 
 Pour tout groupe alg\'ebrique connexe $H_1$ et tout homomorphisme fini surjectif $\psi_1: H_1\to G $, 
le noyau $\psi_1$ est commutatif et on a une suite exacte de $\Gamma_k$-modules:
 $$\pi_1(H_{1,\bk})\to \pi_1(G_{\bk})\to \Ker(\psi_1)(\bk)\to 0.$$
Ceci donne un homomorphisme surjectif de $\Gamma_k$-modules $\theta_1:\Ker(\psi_1)(\bk)\to M\cong S(\bk) $ et,
puisque $[H_1]|_{e_G}=0=[H]|_{e_G}$, on a $\theta_{1,*}([H_1])=[H]\in H^1(X,S)$.
En utilisant l'action de $S$, on a  un $\Ker(\psi_1)$-morphisme $\phi: H_1\to H $ au-dessus de $G$ tel que $\phi(e_{H_1})=e_H$.
Soient $\chi_1,\chi_2: H_1\times H_1\to H $ deux morphismes
 avec $\chi_1(h_1,h_2)=\phi(h_1\cdot h_2)$ et $\chi_2(h_1,h_2)=\phi(h_1)\cdot \phi(h_2)$ pour tous $h_1,h_2\in H_1$.
 Alors $\chi_1(e_{H_1},e_{H_1})=\chi_2(e_{H_1},e_{H_1})$ et $\psi\circ \chi_1=\psi\circ \chi_2 $.
 Ceci induit:
 $$\chi: H_1\times_kH_1\xrightarrow{\chi_1,\chi_2} H\times_GH\cong H\times_kS\xrightarrow{p_2} S.$$
Puisque $H_1$ est connexe, on a $\Im (\chi)=e_S$, $\chi_1=\chi_2$ et $\phi$ est un homomorphisme.
\end{proof}

\subsection{Rel\`evement d'une action par un torseur}\label{3.2}

Soient $G$ un groupe alg\'ebrique connexe et $(X,\rho )$ une $G$-vari\'et\'e lisse g\'eom\'e\-triquement int\`egre.
Soient $F$ un $k$-groupe fini et $f: Y\to X$ un $F$-torseur.
Notons $p_1: G\times X\to G$, $p_2: G\times X\to X$ les deux projections.

D'apr\`es \cite[X.2.2]{SGA1}, on a deux suites exactes de groupes fondamentaux
$$1\to \pi_1(X_{\bk})\to \pi_1(X)\to \Gamma_k\to 1\ \ \ \text{et}\ \ \ 1\to \pi_1((G\times X)_{\bk})\to \pi_1(G\times X)\to \Gamma_k\to 1.$$
D'apr\`es \cite[XII.5.2]{SGA1}, on a $\pi_1((G\times X)_{\bk})\cong \pi_1(G_{\bk})\times \pi_1(X_{\bk})$, 
car ceci vaut pour les espaces topologiques.
Alors on a une suite exacte de groupes fondamentaux:
$$1\to \pi_1(G_{\bk})\to \pi_1(G\times X)\xrightarrow{p_{2,\pi_1}} \pi_1(X)\to 1 $$
qui admet une section induite par $i_e: X\to G\times X: x\mapsto (e_G,x) $ et l'action de $ \pi_1(X)$ sur $\pi_1(G_{\bk})$ se factorise par $\Gamma_k$. 
D'apr\`es (\ref{BiDprop2.2e}), cette suite exacte induit une suite exacte d'ensembles point\'es (voir \cite[\S 5.8]{Se})
\begin{equation}\label{BiDprop2.2e1}
1\to H^1(X,F)\xrightarrow{p_2^*} H^1(G\times X,F)\xrightarrow{\iota} H^1(G_{\bk},F)^{\Gamma_k}
\end{equation}
et $p_2^*$ admet une section induite par $i_e^*$.

\begin{prop}\label{BiDprop2.2}
Soient $G$ un groupe alg\'ebrique connexe et $(X,\rho )$ une $G$-vari\'et\'e lisse g\'eom\'e\-triquement int\`egre.
Soient $F$ un $k$-groupe fini et $f: Y\to X$ un $F$-torseur.
Alors les hypoth\`eses ci-dessous sont \'equivalentes:

(a) on a $\rho^*([Y])=p_2^*([Y])\in H^1(G\times X,F) $;

(b) pour $\iota$ dans (\ref{BiDprop2.2e1}), on a $\iota (\rho^*([Y]))=0\in H^1(G_{\bk},F)$;

(c) le $F$-torseur $Y$ est $G$-compatible, i.e. l'action de  $G$ sur $X$ se rel\`eve en une action sur $Y$;

(d) il existe un morphisme $\rho_Y: G\times Y\to Y$  tel que $\rho_Y|_{e_G\times Y}=id_Y$
et que $\rho_Y$ soit compatible avec $\rho$, i.e. $\rho\circ (id_G\times f)=f\circ \rho_Y$.

De plus, sous les hypoth\`eses ci-dessus, on a

(1) l'action de $G$ sur $Y$ pour laquelle $f$ est un $G$-morphisme est unique;

(2) l'action de $G$ et celle de $F$ commutent;

(3) pour tout $\sigma\in H^1(k,F)$, le $F_{\sigma}$-torseur $Y_{\sigma}$ est $G$-compatible.
\end{prop}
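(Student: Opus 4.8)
The plan is to prove the equivalence of (a)--(d) by a cycle of implications, and then deduce (1)--(3). First I would establish (a)$\Leftrightarrow$(b): this is immediate from the exact sequence of pointed sets \eqref{BiDprop2.2e1}, since $p_2^*$ admits a section induced by $i_e^*$ and $\iota \circ p_2^* = 0$; one checks that $i_e^* \circ \rho^* = \id$ on $H^1(X,F)$ (because $\rho \circ i_e = \id_X$), so $\rho^*([Y])$ and $p_2^*([Y])$ have the same image under $i_e^*$, hence they agree if and only if they agree after applying $\iota$, i.e. if and only if $\iota(\rho^*([Y])) = 0$.

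Next, (a)$\Leftrightarrow$(d): the equality $\rho^*([Y]) = p_2^*([Y])$ in $H^1(G\times X, F)$ says precisely that the two $F$-torseurs over $G\times X$, namely $(\id_G\times f)^{-1}$-pullback arrangements $\rho^*Y := (G\times X)\times_{\rho, X} Y$ and $p_2^* Y = G\times Y$, are isomorphic as $F$-torseurs over $G\times X$. An isomorphism $G\times Y \xrightarrow{\sim} \rho^* Y$ over $G\times X$ is the same datum as a morphism $\rho_Y\colon G\times Y \to Y$ with $\rho \circ (\id_G\times f) = f\circ \rho_Y$; pinning down the normalization $\rho_Y|_{e_G\times Y} = \id_Y$ uses the section $i_e$ and the fact that $\rho^*Y$ and $p_2^*Y$ restrict to the same torseur $Y$ over $\{e_G\}\times X \cong X$ — this is where one must be slightly careful, since a priori the isomorphism over $G\times X$ could differ from $\id_Y$ over $e_G\times Y$ by an element of $F(k)$ or of $\Aut$; I would either adjust the isomorphism by such an automorphism, or argue via connectedness of $G$ that the discrepancy is trivial. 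Then (d)$\Rightarrow$(c) requires checking that $\rho_Y$ is genuinely a group action, i.e. $\rho_Y\circ(m_G\times\id_Y) = \rho_Y\circ(\id_G\times\rho_Y)$ and the identity axiom; both sides are morphisms $G\times G\times Y \to Y$ lying over the action map $G\times G\times X\to X$, they agree over $e_G\times e_G\times Y$, and since $Y\to X$ is a finite \'etale (hence unramified, separated) morphism and $G\times G$ is connected, two such lifts agreeing at one point coincide — this is the key rigidity input. Finally (c)$\Rightarrow$(a) is formal: if $G$ acts on $Y$ compatibly, then $\rho_Y$ exhibits the required isomorphism of torseurs, whence $\rho^*([Y]) = p_2^*([Y])$.

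For the supplementary assertions: (1) is exactly the rigidity statement used above — any two compatible actions $\rho_Y, \rho_Y'$ agree over $e_G\times Y$ (both restrict to $\id_Y$ there) and lie over $\rho$, so they coincide by the same connectedness/unramifiedness argument. For (2), the two maps $G\times F\times Y \to Y$ given by $(g,t,y)\mapsto \rho_Y(g, t\cdot y)$ and $(g,t,y)\mapsto t\cdot\rho_Y(g,y)$ lie over the same map to $X$ (namely $(g,x)\mapsto \rho(g,x)$, since $F$ acts along the fibers of $f$), agree on $e_G\times F\times Y$, and again coincide by rigidity since $G$ is connected and $F$ finite; this shows the $G$- and $F$-actions commute. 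For (3), twisting by a cocycle $\sigma\in Z^1(k,F)$ is a functorial operation: the $G$-equivariant structure $\rho_Y$ on $Y$, together with the fact that the $G$- and $F$-actions commute (part (2)), means $\rho_Y$ descends through the twisting construction to give a $G$-action $\rho_{Y_\sigma}$ on $Y_\sigma$ with $f_\sigma$ a $G$-morphism; concretely, $Y_\sigma$ is obtained from $Y\times_k {}_\sigma F$ by a quotient that only involves the commuting $F$-action, so the $G$-action passes to the quotient. The main obstacle is the careful bookkeeping in the (a)$\Leftrightarrow$(d) step and in (3) — matching the abstract torseur-theoretic statement with an honest compatible morphism and tracking base points — but all the hard ``uniqueness/rigidity'' content reduces to the single principle that a lift over a connected base of a morphism to $X$ along the unramified map $Y\to X$ is determined by its value at one point.
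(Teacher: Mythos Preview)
Your plan is correct and follows essentially the same route as the paper: both hinge on the rigidity principle (the paper packages it as a lemma: for $\theta_1,\theta_2\colon G\times Z\to Y$ over $X$ agreeing on $e_G\times Z$, the difference map $G\times Z\to Y\times_X Y\cong Y\times F\to F$ is constant by connectedness of $G$), and both use the split exact sequence \eqref{BiDprop2.2e1} for (a)$\Leftrightarrow$(b).

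Three small points. First, your ``(c)$\Rightarrow$(a) is formal'' is not quite right as stated: to know that $\rho_Y$ gives an isomorphism of $F$-torseurs (not just of schemes over $G\times X$) you need $\rho_Y$ to be $F$-equivariant, which is exactly statement (2); the paper accordingly proves (1) and (2) first, \emph{assuming} (c), and only then deduces (a). Second, in (a)$\Rightarrow$(d) drop the ``connectedness'' alternative --- the discrepancy over $e_G\times Y$ is an $F$-torsor automorphism of $Y$ over $X$, which has no reason to be trivial; the correct fix (and the paper's) is your first option: adjust $\phi$ by pulling back that automorphism along $p_2$, using that the restriction $\Mor_F(p_2^*Y,F)\to\Mor_F(Y,F)$ is surjective. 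Third, for (3) the paper gives a one-line argument you might prefer: condition (b) is a statement over $\bar{k}$, and twisting by $\sigma\in H^1(k,F)$ does not change the torsor geometrically, so (b) for $Y$ immediately gives (b) for $Y_\sigma$. Your quotient argument via (2) is also fine.
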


\begin{proof}
Puisque $i_e^*(p_2^*([Y]))=i_e^*(\rho^*([Y]))$ dans $H^1(X,F)$,
l'\'equivalence (a)$\Leftrightarrow$(b) d\'ecoule de la suite exacte (\ref{BiDprop2.2e1}).

\begin{lem}\label{BiDlem2.2}
Pour tout $k$-sch\'ema de type fini $Z$ et tous morphismes $\theta_1,\theta_2: G\times Z\to Y$,
si $f\circ \theta_1=f\circ \theta_2$ et $\theta_1|_{e_G\times Z}=\theta_2|_{e_G\times Z}$, alors $\theta_1=\theta_2$.
\end{lem}

\begin{proof}
En fait, $\theta_1$, $\theta_2$ induisent un morphisme 
$$\theta: G\times Z\xrightarrow{(\theta_1,\theta_2)} Y\times_XY\cong Y\times_kF\xrightarrow{pr_F}F$$
tel que $\theta (e_G\times Z)= e_F$.
Puisque $G$ est int\`egre, $\theta(G\times Z)= e_F$ et donc $\theta_1=\theta_2$.
\end{proof}

Pour (a)$\Rightarrow $(d),
soient $\rho^*Y$ le  pullback de $Y$ par $\rho$ et $p_2^*Y:=G\times Y$.
Notons $\Mor_F(p_2^*Y,F)$ l'ensemble des morphismes $\chi:p_2^*Y\to F $ 
tels que $\chi(a\cdot y)=a\cdot \chi(y)\cdot a^{-1}$ pour tous $a\in F$ et $y\in p_2^*Y$. 
D\'efinissons de m\^eme $\Mor_F(Y,F)$.
Alors $Y\cong e_G\times Y\sbt p_2^*Y$ induit un morphisme surjectif $\Mor_F(p_2^*Y,F)\xrightarrow{\Mor(i_e)} \Mor_F(Y,F)$,
car il existe une section induite par $p_2$.
Par hypoth\`ese, on a un isomorphisme de $F$-torseur $p_2^*Y\xrightarrow{\phi} \rho^*Y $.
Pour tout isomorphisme $\phi_1$,
 l'argument classique montre qu'il existe un $\chi_1\in \Mor_F(p_2^*Y,F)$ tel que $\phi_1=\chi_1\cdot \phi$.
Puisque $\Mor(i_e)$ est surjectif, on peut supposer que $\phi|_{e_G\times X}$ est l'identit\'e de $Y$.
Le morphisme $\rho_Y: G\times Y\xrightarrow{\phi}\rho^*Y\to Y$ donne (d).

Pour (d)$\Rightarrow $(c), l'hypoth\`ese (d) donne un diagramme commutatif:
\begin{equation}\label{BiDprop2.2diagram}
\xymatrix{id_Y: &e_G\times Y\ar[d]^f\ar[r]^{i_{e,Y}}&G\times Y\ar[d]^{id_G\times f}\ar[r]^{\rho_Y}&Y\ar[d]^f\\
id_X:&e_G\times X\ar[r]^{i_e}&G\times X\ar[r]^{\rho}&X
}
\end{equation}
tel que $\rho_Y\circ i_{e,Y}=id_Y$.
Soient $\theta_1,\theta_2: G\times G\times Y\to Y$ les deux morphismes d\'efinis par
 $$\theta_1(g_1,g_2,y)=g_1\cdot (g_2\cdot y)\ \ \ \text{et}\ \ \  \theta_2(g_1,g_2,y)=(g_1\cdot g_2)\cdot y$$
  pour tous $g_1,g_2\in G$ et $y\in Y$.
Alors $\theta_1(e_G,g_2,y)=\theta_2(e_G,g_2,y)$ et le lemme \ref{BiDlem2.2} montre que $\theta_1=\theta_2$.
Donc $\rho_Y$ est une action et $f$ est un $G$-morphisme. Ceci donne (c).

\medskip

Supposons (c) et montrons (1), (2), (3) et (a). 

L'hypoth\`ese (c) donne aussi le diagramme commutatif (\ref{BiDprop2.2diagram}) avec $\rho_Y$ l'action relev\'ee de $G$ sur $Y$.

Soient $\theta_1, \theta_2$ deux actions de $G$ sur $Y$ telles que $f$ soit un $G$-morphisme.
Puisque $f\circ \theta_1=\rho\circ (id_G\times f)=f\circ \theta_2$,
On applique le lemme \ref{BiDlem2.2} \`a  $\theta_1,\theta_2: G\times Y\to Y$ et on obtient (1).

Soient $\theta_1, \theta_2: G\times F\times Y\to Y$ les deux morphismes d\'efinis par 
$\theta_1(g,a,y)=g\cdot (a\cdot y)$ et $\theta_2(g,a,y)=a\cdot (g\cdot y)$ pour tous $g\in G $, $a\in F$ et $y\in Y$.
Alors $\theta_1(e_G,a,y)=a\cdot y=\theta_2(e_G,a,y) $ et 
$(f\circ \theta_1)(g,a,y)=g\cdot f(y)=(f\circ \theta_2 )(g,a,y)$.
On applique le lemme \ref{BiDlem2.2} \`a  $\theta_1, \theta_2: G\times F\times Y\to Y$ et on obtient (2).

Pour le $F$-torseur $p_2^*([Y])=(G\times Y\to G\times X)$,
 l'\'enonc\'e (2) montre que l'action $G\times Y\to Y$ est un $F$-morphisme compatible avec $\rho$.
Ceci induit un isomorphisme de $F$-torseurs $p_2^*([Y])=\rho^*([Y])$ et on a (a).

Puisque l'\'enonc\'e (b) est un \'enonc\'e sur $\bk$, on obtient (3).
\end{proof}

\begin{cor}\label{BiDrem2.2}
Soient $G$ un groupe alg\'ebrique connexe et $(X,\rho )$ une $G$-vari\'et\'e lisse g\'eom\'e\-triquement int\`egre.
Alors $\rho$ induit un homomorphisme $\rho_{\pi_1}: \pi_1(G_{\bk})\to \pi_1(X)$ 
et, pour tout $k$-groupe fini $F$, il induit $\rho_{\pi_1}^*: H^1(X,F)\to H^1(G_{\bk},F)$
 et  on a:

(1) le sous-groupe $\Im(\rho_{\pi_1})\sbt \pi_1(X)$ est normal et il est contenu dans le centre de $\pi_1(X_{\bk})$;

(2) pour tout $\alpha\in H^1(X,F)$, on a $\rho_{\pi_1}^*(\alpha)=\iota (\rho^*(\alpha))$, o\`u $\iota$ est dans (\ref{BiDprop2.2e1});

(3) pour tout 1-cocycle $a$ de $\pi_1(X)$ \`a valeurs dans $F(\bk)$, l'homomorphisme $a\circ \rho_{\pi_1}: \pi_1(G_{\bk})\to F(\bk)$ est de noyau $\Gamma_k$-invariant, et il est nul si et seulement si $\rho_{\pi_1}^*([a])=0$;

 (4) si $X$ est un $G$-espace homog\`ene \`a stabilisateur g\'eom\'etrique connexe,
alors tout $F$-torseur $G$-compatible est constant, i.e. ce torseur est isomorphe \`a $M\times_k X$ avec $M$ un $F$-torseur sur $k$.
\end{cor}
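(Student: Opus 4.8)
The plan is to realise $\rho_{\pi_1}$ as the restriction to $\pi_1(G_{\bk})$ of the homomorphism $\pi_1(\rho)\colon\pi_1(G\times X)\to\pi_1(X)$ induced by the action, and then to read off all four assertions from the exact sequence (\ref{BiDprop2.2e1}) together with the geometric product decomposition $\pi_1((G\times X)_{\bk})\cong\pi_1(G_{\bk})\times\pi_1(X_{\bk})$ of \cite[XII.5.2]{SGA1}. First I would fix a geometric point $\bar x$ of $X$ and take $(e_G,\bar x)$ as base point of $G\times X$, all remaining choices being canonical up to conjugation. From $\rho\circ i_e=\id_X$ and $p_2\circ i_e=\id_X$ one sees that $\pi_1(\rho)$ and $p_{2,\pi_1}$ both split $(i_e)_*$; in particular $\pi_1(\rho)$ is surjective and $\pi_1(G_{\bk})=\Ker(p_{2,\pi_1})$ is normal in $\pi_1(G\times X)$. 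I would set $\rho_{\pi_1}:=\pi_1(\rho)|_{\pi_1(G_{\bk})}$; since $\Ker(p_{2,\pi_1})$ is the image of $\pi_1(G_{\bk}\times\{\bar x\})$ and $\rho$ restricted to $G_{\bk}\times\{\bar x\}$ is the orbit map $G_{\bk}\to X_{\bk}$, the map $\rho_{\pi_1}$ factors through $\pi_1(X_{\bk})\subseteq\pi_1(X)$, and $\rho_{\pi_1}^*$ on $H^1(-,F)\cong H^1(\pi_1(-),F(\bk))$ (see (\ref{BiDprop2.2e})) is precomposition with $\rho_{\pi_1}$. Assertion (2) is then formal: under the identification (\ref{BiDprop2.2e}), $\rho^*$ is precomposition with $\pi_1(\rho)$ while $\iota$ is restriction along $\pi_1(G_{\bk})\hookrightarrow\pi_1(G\times X)$ (this is how (\ref{BiDprop2.2e1}) is built, cf. \cite[\S 5.8]{Se}), so $\iota\circ\rho^*=(\pi_1(\rho)|_{\pi_1(G_{\bk})})^*=\rho_{\pi_1}^*$.

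For (1) I would argue geometrically. Under $\pi_1((G\times X)_{\bk})=\pi_1(G_{\bk})\times\pi_1(X_{\bk})$ one has $(i_e)_*(x)=(1,x)$, so $\pi_1(\rho)$ restricts to the identity on the second factor $\{1\}\times\pi_1(X_{\bk})$ (because $\rho\circ i_e=\id$) and agrees with $\rho_{\pi_1}$ on $\pi_1(G_{\bk})\times\{1\}$; writing $(n,x)=(n,1)(1,x)=(1,x)(n,1)$ — a direct product — and applying $\pi_1(\rho)$ gives $\rho_{\pi_1}(n)\,x=x\,\rho_{\pi_1}(n)$ for all $n$ and $x$, whence $\Im(\rho_{\pi_1})$ lies in the centre of $\pi_1(X_{\bk})$; normality of $\Im(\rho_{\pi_1})$ in $\pi_1(X)$ is automatic, being the image under the surjection $\pi_1(\rho)$ of the normal subgroup $\Ker(p_{2,\pi_1})$. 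For (3), since $\Im(\rho_{\pi_1})\subseteq\pi_1(X_{\bk})$ acts trivially on $F(\bk)$, the cocycle $a$ restricts there to a homomorphism, so $a\circ\rho_{\pi_1}\colon\pi_1(G_{\bk})\to F(\bk)$ is a homomorphism whose class in $H^1(G_{\bk},F)$ equals $\rho_{\pi_1}^*([a])=\iota(\rho^*([a]))$ by (2). To see its kernel is $\Gamma_k$-stable I would pull $a$ back to the cocycle $b:=a\circ\pi_1(\rho)$ on $\pi_1(G\times X)$; for $g\in\pi_1(G\times X)$ and $n\in\pi_1(G_{\bk})$, so that $gng^{-1}$ maps to $1\in\Gamma_k$, the cocycle identity gives $b(gng^{-1})=b(g)\cdot{}^{g}b(n)\cdot b(g)^{-1}$, whence $b(n)=e\iff b(gng^{-1})=e$; thus $\Ker(a\circ\rho_{\pi_1})=\Ker(b|_{\pi_1(G_{\bk})})$ is stable under conjugation by all of $\pi_1(G\times X)$, in particular under the $\Gamma_k$-action on $\pi_1(G_{\bk})$ (matched to conjugation by lifts of $\Gamma_k$ through the $k$-morphism $p_1\colon G\times X\to G$). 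Finally a homomorphism into $F(\bk)$ conjugate to the trivial one is trivial, so $a\circ\rho_{\pi_1}=0$ if and only if $\rho_{\pi_1}^*([a])=0$ in $H^1(G_{\bk},F)$.

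For (4), the one extra input is that when $X$ is a $G$-homogeneous space with connected geometric stabilizer $\Stab_{G_{\bk}}(\bar x)$, the orbit map $G_{\bk}\to X_{\bk}$ inducing $\rho_{\pi_1}$ is a torsor under this connected group, hence $\rho_{\pi_1}\colon\pi_1(G_{\bk})\to\pi_1(X_{\bk})$ is surjective. Given a $G$-compatible $F$-torseur $f\colon Y\to X$ with $[Y]$ represented by a cocycle $a$, the equivalence (b)$\Leftrightarrow$(c) of Proposition \ref{BiDprop2.2} gives $\iota(\rho^*([Y]))=0$, hence $a\circ\rho_{\pi_1}=0$ by (2) and (3); surjectivity of $\rho_{\pi_1}$ onto $\pi_1(X_{\bk})$ then forces $a|_{\pi_1(X_{\bk})}=0$, i.e. $[Y]\in\Ker(H^1(X,F)\to H^1(X_{\bk},F))$. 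The inflation-restriction sequence attached to $1\to\pi_1(X_{\bk})\to\pi_1(X)\to\Gamma_k\to1$ (noting $F(\bk)^{\pi_1(X_{\bk})}=F(\bk)$) identifies this kernel with the image of $H^1(k,F)\to H^1(X,F)$, so $Y$ is the pullback to $X$ of an $F$-torseur over $\Spec k$. The delicate point throughout is keeping the base points and the various $\Gamma_k$-actions of (3) straight, together with the surjectivity $\pi_1(G_{\bk})\twoheadrightarrow\pi_1(X_{\bk})$ used in (4); the remainder is diagram chasing.
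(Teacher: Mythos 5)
Your argument is correct and is essentially the paper's own proof, only written out in full: the paper likewise defines $\rho_{\pi_1}$ via $\pi_1(G_{\bk})=\Ker(p_{2,\pi_1})$ and the decomposition $\pi_1((G\times X)_{\bk})\cong\pi_1(G_{\bk})\times\pi_1(X_{\bk})$, treats (2) and (3) as formal, and for (4) invokes the surjectivity $\Im(\rho_{\pi_1})=\pi_1(X_{\bk})$ for homogeneous spaces with connected geometric stabilizer (citing \cite[Prop. 5.5.4]{Sz}) to conclude that a $G$-compatible torseur is trivial on $X_{\bk}$ and hence comes from $k$. Your careful handling of base points, the cocycle identity in (3), and the nonabelian inflation--restriction step in (4) simply fills in details the paper leaves implicit.
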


\begin{proof}
L'\'enonc\'e (1) vaut car $\pi_1(G_{\bk})=\Ker(\pi_1(G\times X)\xrightarrow{p_{2,*}} \pi_1(X))$ et $\pi_1((G\times X)_{\bk})\cong \pi_1(G_{\bk})\times \pi_1(X_{\bk})$. 
Les \'enonc\'es (2) et (3) d\'ecoulent par d\'efinition.

Pour (4), dans ce cas, $\Im(\rho_{\pi_1})=\pi_1(X_{\bk})$ (\cite[Prop. 5.5.4]{Sz}).
D'apr\`es la proposition \ref{BiDprop2.2} et (2), (3) ci-dessus, tout $F$-torseur $G$-compatible est trivial sur $X_{\bk}$, 
et donc il provient d'un $F$-torseur sur $k$.
\end{proof}

\begin{cor}\label{BiDcor2.2.1}
Sous les notations et les hypoth\`eses ci-dessus, supposons que $f$ est $G$-compatible.
Alors, pour tout $k$-sch\'ema fini \'etale $E$, 
la restriction de Weil $V:=R_{X\times E/X}(Y\times E)$ est un $R_{E/k}(F\times_kE)$-torseur $G$-compatible sur $X$.
\end{cor}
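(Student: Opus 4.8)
Le plan est d'\'etablir s\'epar\'ement deux choses : que $V$ est un $F'$-torseur sur $X$, o\`u $F':=R_{E/k}(F\times_kE)$, puis que ce torseur est $G$-compatible. Pour le premier point, notons $q\colon X\times E\to X$ la projection, qui est finie \'etale puisque $E$ l'est sur $k$; ainsi $Y\times E=q^*Y$ est un $F$-torseur fini sur $X\times E$ et $V=R_{X\times E/X}(q^*Y)$ existe comme $X$-sch\'ema. Le groupe structural de $q^*Y$ sur $X\times E$ est le groupe constant $F\times_k(X\times E)$, qui provient par changement de base de $F\times_kE$ vu au-dessus de $E$; la formule de changement de base pour la restriction de Weil donne donc un isomorphisme de $X$-sch\'emas en groupes $R_{X\times E/X}(F\times_k(X\times E))\cong F'\times_kX$, et $F'$ est un $k$-groupe fini (ici $k$ est de caract\'eristique $0$ et $E/k$ est fini \'etale). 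Comme $q^*Y$ est localement trivial pour la topologie \'etale sur $X\times E$, le sch\'ema $V$ l'est sur $X$, d'o\`u le fait que $V\to X$ est un $F'$-torseur.

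Pour la $G$-compatibilit\'e, il suffit, d'apr\`es la condition (a) de la proposition \ref{BiDprop2.2}, de montrer $\rho^*[V]=p_2^*[V]$ dans $H^1(G\times X,F')$, o\`u $\rho,p_2\colon G\times X\to X$ sont l'action et la projection. La formule de changement de base ci-dessus, appliqu\'ee \`a $\rho$ puis \`a $p_2$, montre que l'isomorphisme de restriction de Weil (Shapiro) $H^1(\,\cdot\,,F')\cong H^1((\,\cdot\,)\times_kE,F)$, qui envoie $[V]$ sur $q^*[Y]$, est naturel en la base et transforme $\rho^*$, resp. $p_2^*$, en $(\rho\times\id_E)^*$, resp. $(p_2\times\id_E)^*$. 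Notons $q'\colon G\times X\times E\to G\times X$ la projection; on a les carr\'es commutatifs $q\circ(\rho\times\id_E)=\rho\circ q'$ et $q\circ(p_2\times\id_E)=p_2\circ q'$. Puisque $f$ est $G$-compatible, la proposition \ref{BiDprop2.2} (a) donne $\rho^*[Y]=p_2^*[Y]$ dans $H^1(G\times X,F)$, d'o\`u
$$(\rho\times\id_E)^*q^*[Y]=(q')^*\rho^*[Y]=(q')^*p_2^*[Y]=(p_2\times\id_E)^*q^*[Y]$$
dans $H^1(G\times X\times E,F)$, et donc $\rho^*[V]=p_2^*[V]$. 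On conclut en appliquant la proposition \ref{BiDprop2.2} (a) au $F'$-torseur $V$ sur la $G$-vari\'et\'e $X$.

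Le point d\'elicat sera la v\'erification soigneuse de la naturalit\'e (en la base) de l'isomorphisme de restriction de Weil pour les torseurs sous un groupe fini, c'est-\`a-dire sa compatibilit\'e avec les images r\'eciproques le long de $\rho$ et de $p_2$. On peut l'\'eviter en construisant l'action de $G$ directement sur le foncteur des points $V(T)=\Hom_X(T\times_kE,Y)$ (pour $T$ un $X$-sch\'ema), par $(g\cdot\phi)(s,e):=\rho_Y(g(s),\phi(s,e))$ pour $g\in G(T)$ et $\phi\in V(T)$, o\`u $\rho_Y$ est l'action de $G$ sur $Y$ fournie par la proposition \ref{BiDprop2.2}; comme $f\circ(g\cdot\phi)=\rho\circ(g,f\circ\phi)$, cette formule d\'efinit bien une action de $G$ sur $V$ au-dessus de $\rho$, pour laquelle $V\to X$ est un $G$-morphisme, de sorte que $V$ est $G$-compatible.
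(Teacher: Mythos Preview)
Your proposal is correct. Your second approach (the final paragraph, constructing the $G$-action directly on the functor of points $V(T)=\Hom_X(T\times_kE,Y)$) is essentially the paper's proof: the paper also appeals to condition~(d) of Proposition~\ref{BiDprop2.2} and builds $\rho_V\in\Mor_X(G\times V,V)$ directly, using the Weil-restriction adjunction $\Mor_X(G\times V,V)\cong\Mor_{X\times E}(G\times V\times E,Y\times E)$ and the composite $G\times V\times E\xrightarrow{\id_G\times\theta}G\times Y\times E\xrightarrow{\rho_Y\times\id_E}Y\times E$, where $\theta$ is the counit of the adjunction. Your functor-of-points formula $(g\cdot\phi)(s,e)=\rho_Y(g(s),\phi(s,e))$ is exactly this morphism written pointwise, and your check $f\circ(g\cdot\phi)=\rho\circ(g,f\circ\phi)$ together with $e_G\cdot\phi=\phi$ is the verification of~(d).

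Your first approach, via condition~(a) and the Shapiro-type isomorphism $H^1(-,F')\cong H^1((-)\times_kE,F)$, is a genuine (mild) alternative. What makes it work is precisely the base-change compatibility of Weil restriction: $\rho^*V\cong R_{G\times X\times E/G\times X}((\rho^*Y)\times E)$ and similarly for $p_2$, so $\rho^*Y\cong p_2^*Y$ as $F$-torsors gives $\rho^*V\cong p_2^*V$ as $F'$-torsors. You rightly flag this naturality as the point needing care; once stated this way it is the standard fact that $R_{X'/X}$ commutes with arbitrary base change $S\to X$, and nothing more. The advantage of the paper's route (your second approach) is that it bypasses any cohomological bookkeeping and produces the action explicitly; the advantage of your first route is that it makes transparent \emph{why} $G$-compatibility is inherited---it is simply the functoriality of $R_{E/k}$ applied to the equality $\rho^*[Y]=p_2^*[Y]$.
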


\begin{proof}
Notons $f_V: V\to X$. Par hypoth\`ese, $f_V$ est un torseur sous le groupe 
$$R_{X\times E/X}(F\times X\times E)\cong R_{E/k}(F\times_kE).$$
On consid\`ere $G\times V$ comme un $X$-sch\'ema par le morphisme $G\times V\xrightarrow{id_G\times f_V}G\times X\xrightarrow{\rho}X $ et $G\times Y$ comme un $X$-sch\'ema par $\rho\circ (id_G\times f)$.
Dans ce cas, tout morphisme $\rho_V\in \Mor_X(G\times V,V)$ satisfait $f_V\circ \rho_V=\rho\circ (id_G\times f_V)$.
D'apr\`es la proposition \ref{BiDprop2.2} (d), il suffit de trouver un $\rho_V\in \Mor_X(G\times V,V)$ tel que $\rho_V|_{e_G\times V}=id_V$.
Puisque 
$$ \Mor_X(V,V)\iso \Mor_{X\times E}(V\times E,Y\times E)\ \ \ \text{et que}\ \ \  
 \Mor_X(G\times V,V)\iso \Mor_{X\times E}(G\times V\times E,Y\times E),$$
l'identit\'e $id_V$ induit un morphisme $V\times E\xrightarrow{\theta}Y\times E $.
Le $X\times E$-morphisme
$$ G\times V\times E\xrightarrow{id_G\times \theta}G\times Y\times E\xrightarrow{\rho_Y\times id_E}Y\times E$$
 induit un morphisme $\rho_V\in \Mor_X(G\times V,V)$ qui satisfait $\rho_V|_{e_G\times V}=id_V$.
\end{proof}

\begin{cor}\label{BiDcor2.2}
Soient $G$ un groupe alg\'ebrique connexe, $Z$ une vari\'et\'e lisse g\'eom\'e\-triquement int\`egre et $p: X\to Z$ un $G$-torseur.
Pour tout $k$-groupe fini $F$ et tout $F$-torseur $G$-compatible $Y\to X$, 
il existe un $F$-torseur $Y_Z$ sur $Z$ tel que $[Y]=p^*( [Y_Z])\in H^1(X,F)$.
\end{cor}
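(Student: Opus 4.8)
The key point is that a $G$-torsor $p:X\to Z$ under a connected linear group is "cohomologically trivial" for finite coefficients after restriction to fibres, so a $G$-compatible finite torsor on $X$ should descend. The plan is to use the fibration $p:X\to Z$ together with the exact sequence of Proposition \ref{BiDprop2.2} applied with the roles played by $Z$ (base) and $G$ (acting group), since $X\to Z$ is itself a $G$-torsor. First I would set up the Leray / fundamental-group exact sequence for $p:X\to Z$. Because $p$ is a $G$-torsor with $G$ connected linear, the fibre is $G_{\bk}$, and one has an exact sequence of étale fundamental groups $\pi_1(G_{\bk})\to\pi_1(X)\xrightarrow{p_{\pi_1}}\pi_1(Z)\to 1$; moreover, exactly as in the discussion preceding Proposition \ref{BiDprop2.2} (where $G\times X\to X$ plays the role of $p$), the geometric product decomposition gives a short exact sequence and hence, via \eqref{BiDprop2.2e}, an exact sequence of pointed sets
\begin{equation*}
1\to H^1(Z,F)\xrightarrow{p^*} H^1(X,F)\xrightarrow{\iota_Z} H^1(G_{\bk},F).
\end{equation*}
So it suffices to prove that $\iota_Z([Y])=0$, i.e. that $[Y]$ restricted to a geometric fibre $G_{\bk}$ of $p$ is trivial.

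Next I would identify that geometric restriction with the hypothesis that $Y$ is $G$-compatible. Fix a geometric point and consider the orbit map: since $p$ is a $G$-torsor, any geometric point $x$ of $X$ gives an isomorphism $G_{\bk}\xrightarrow{\sim}X_{\bar{z}}$ of the fibre over $z=p(x)$, $g\mapsto g\cdot x$; in other words the restriction of $[Y]$ to the fibre is computed by pulling back along $\rho$ followed by restriction to $G_{\bk}\times\{x\}$. By Proposition \ref{BiDprop2.2}, the $G$-compatibility of $Y\to X$ is equivalent (condition (b)) to $\iota(\rho^*([Y]))=0\in H^1(G_{\bk},F)$, where $\iota$ is the map of \eqref{BiDprop2.2e1} for the action $\rho:G\times X\to X$. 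I would then check that $\iota(\rho^*([Y]))$ and $\iota_Z([Y])$ are, up to the identification $G_{\bk}\cong X_{\bar z}$ above, literally the same class in $H^1(G_{\bk},F)$: both measure the restriction of $[Y]$ along the map $g\mapsto g\cdot x$ from $G_{\bk}$ into $X$. Hence $G$-compatibility of $Y\to X$ forces $\iota_Z([Y])=0$, and the exact sequence above yields $[Y_Z]\in H^1(Z,F)$ with $p^*([Y_Z])=[Y]$.

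The main obstacle I anticipate is bookkeeping of base points and the precise comparison of the two "restriction to the fibre" maps $\iota_Z$ and $\iota\circ\rho^*$, since these live a priori over different schemes ($G_{\bk}$ as abstract group versus $G_{\bk}\times\{x\}$ versus the fibre $X_{\bar z}$) and the identifications depend on a chosen geometric point; one must verify the identifications are compatible with the $\Gamma_k$-action well enough that the vanishing transfers. A clean way to avoid delicate point-chasing is to observe that $Z(k)$ may be empty, so instead of a rational point one works geometrically: it is enough to prove $p^*$ is injective with the stated image after passing to $\bk$ and checking Galois-equivariance, which is exactly what \eqref{BiDprop2.2e} and the decomposition $\pi_1((G\times_k Z)_{\bk})\cong\pi_1(G_{\bk})\times\pi_1(Z_{\bk})$ (applied to the $G$-torsor $X\to Z$, locally isomorphic to $G\times Z$ in the étale topology) provide. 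Once the exactness of the displayed sequence is in place and $\iota_Z([Y])=0$ is identified with condition (b) of Proposition \ref{BiDprop2.2}, the corollary follows immediately.
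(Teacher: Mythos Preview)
Your cohomological plan is reasonable but differs substantially from the paper's proof, which is a two-line direct construction. By Proposition~\ref{BiDprop2.2}(2) the $G$- and $F$-actions on $Y$ commute, so the composite $Y\to X\to Z$ is a $(G\times F)$-torsor over $Z$; setting $Y_Z:=Y/G$ gives an $F$-torsor over $Z$, and the resulting Cartesian square $Y\cong X\times_Z Y_Z$ yields $[Y]=p^*[Y_Z]$ immediately. No fundamental-group machinery is needed.

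Your route instead rests on an exact sequence $\pi_1(G_{\bk})\to\pi_1(X)\to\pi_1(Z)\to 1$ for the torsor $p$, together with inflation--restriction in nonabelian $H^1$. You correctly identify $G$-compatibility with vanishing of the fibre restriction via condition~(b) of Proposition~\ref{BiDprop2.2} and the orbit map. But be careful: the discussion preceding Proposition~\ref{BiDprop2.2} and the sequence \eqref{BiDprop2.2e1} treat only the \emph{trivial} torsor $p_2:G\times X\to X$, where the section $i_e$ exists. For a general $G$-torsor there is no such section, and the exactness you need at $\pi_1(X)$, namely $\ker(p_{\pi_1})=\Im(\rho_{\pi_1})$, is in fact \emph{equivalent} to the corollary itself---both say that every $G$-compatible connected finite \'etale cover of $X$ descends to $Z$. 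Your remark that $X$ is ``locally $G\times Z$ in the \'etale topology'' does not by itself bridge this; passing from \'etale-local triviality to the global homotopy exact sequence is precisely the content to be supplied. One can fill this gap by invoking the homotopy exact sequence for smooth fibrations in characteristic~$0$ (e.g.\ via comparison with topology), but the paper's quotient construction bypasses the issue entirely and produces $Y_Z$ explicitly.
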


\begin{proof}
D'apr\`es la proposition \ref{BiDprop2.2} (2), $Y$ est un $G\times F$-torseur sur $Z$ tel que $Y/F=X$.
Alors $Y_Z:=Y/G$ est un $F$-torseur sur $Z$ et $Y\to Y_Z$ est un $F$-morphisme.
Donc $[Y]=p^*( [Y_Z])$.
\end{proof}

\subsection{Le groupe minimal compatible avec un torseur}\label{3.3}

Soit $G$ un groupe alg\'ebrique connexe.
Soit $\CC_G$ la cat\'egorie des groupes alg\'ebriques connexes $H$ isog\`enes \`a $G$, 
i.e. munis d'un homomorphisme fini surjectif $\psi: H\to G$.
C'est clair que si $G$ est lin\'eaire, tout objet dans $\CC_G$ est aussi lin\'eaire.

Soit $(X,\rho )$ une $G$-vari\'et\'e lisse g\'eom\'e\-triquement int\`egre.
Soient $F$ un $k$-groupe fini et $f: Y\to X$ un $F$-torseur.
Soit $\CC_G(Y)$ la sous-cat\'egorie pleine de $\CC_G$ 
dont les objets sont les groupes $H$ isog\`enes \`a $G$ tels que $f$ soit $H$-compatible.
D'apr\`es la proposition \ref{BiDprop2.2} (1),
tout objet $H\in \CC_G(Y)$ admet une unique action sur $Y$ telle que $f$ soit un $H$-morphisme.
Alors tout morphisme de $\CC_G(Y)$ est compatible avec les actions ci-dessus.

\begin{prop}\label{BiDprop2.3}
La cat\'egorie $\CC_G(Y)$ admet un objet final $(H_Y\xrightarrow{\psi_Y}G)$, et 
un objet $(H\xrightarrow{\psi}G)\in \CC_G(Y)$ est final si et seulement si l'action de $\ker(\psi)$ sur $Y$ est libre.
\end{prop}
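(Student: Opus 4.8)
The plan is to transfer the whole problem to fundamental groups via Corollaire~\ref{BiDrem2.2}, to realize $H_Y$ through Corollaire~\ref{BiDcor2.1}, and then to establish the freeness criterion by proving directly that $\ker(\psi_Y)$ acts freely on $Y$. First I set up the translation. Recall $\pi_1(G_{\bk})$ is abelian. Fix a $1$-cocycle $a\colon\pi_1(X)\to F(\bk)$ representing $[Y]$. Since $\Im(\rho_{\pi_1})$ lies in the centre of $\pi_1(X_{\bk})$ (Corollaire~\ref{BiDrem2.2}(1)), the composite $\beta:=a\circ\rho_{\pi_1}\colon\pi_1(G_{\bk})\to F(\bk)$ is a homomorphism; its kernel $N$ does not depend on the choice of $a$ inside its conjugacy class, is $\Gamma_k$-invariant (Corollaire~\ref{BiDrem2.2}(3)), and has finite index. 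Put $M:=\pi_1(G_{\bk})/N$, a finite abelian $\Gamma_k$-module, let $\theta\colon\pi_1(G_{\bk})\twoheadrightarrow M$ be the projection, so $\beta=\iota_M\circ\theta$ with $\iota_M\colon M\hookrightarrow F(\bk)$. For $(H\xrightarrow{\psi}G)\in\CC_G$ the $H$-action on $X$ satisfies $\rho_{H,\pi_1}=\rho_{\pi_1}\circ\psi_{\pi_1}$ by functoriality, so by Corollaire~\ref{BiDrem2.2}(2)(3) together with Proposition~\ref{BiDprop2.2} one has $H\in\CC_G(Y)$ if and only if $\beta\circ\psi_{\pi_1}=0$, equivalently (as $\iota_M$ is injective) $\theta\circ\psi_{\pi_1}=0$.

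Next I construct $H_Y$ and check that it is final. Viewing $M$ as a $\BZ/n$-module with $n$ its exponent, apply Corollaire~\ref{BiDcor2.1} to $\theta$: this produces a connected linear group $H_Y$ with a finite surjective $\psi_Y\colon H_Y\to G$ satisfying $\ker(\psi_Y)(\bk)\cong M$ and $\theta\circ\psi_{Y,\pi_1}=0$; by the previous paragraph $H_Y\in\CC_G(Y)$, and comparing degrees over $\bk$, $[\pi_1(G_{\bk}):\psi_{Y,\pi_1}\pi_1(H_{Y,\bk})]=|M|=[\pi_1(G_{\bk}):N]$, shows that $\psi_{Y,\pi_1}$ identifies $\pi_1(H_{Y,\bk})$ with $N$. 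Given any $H\in\CC_G(Y)$, we have $\theta\circ\psi_{\pi_1}=0$, so the second assertion of Corollaire~\ref{BiDcor2.1} yields a homomorphism $\phi\colon H\to H_Y$ over $G$; being a morphism in the full subcategory $\CC_G(Y)$, it is compatible with the actions on $Y$. If $\phi_1,\phi_2$ are two such, then $h\mapsto\phi_1(h)^{-1}\phi_2(h)$ is a homomorphism from the connected group $H$ into the finite central subgroup $\ker(\psi_Y)$ of $H_Y$, hence trivial; so $\phi$ is unique and $H_Y$ is final.

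For the freeness criterion, suppose first $\ker(\psi)$ acts freely on $Y$. The unique $\phi\colon H\to H_Y$ over $G$ is action-compatible, so $\ker(\phi)\subseteq\ker(\psi)$ acts trivially on $Y$; freeness forces $\ker(\phi)=\{e_H\}$, while $\phi(H)$ is a closed connected subgroup of $H_Y$ of full dimension, so $\phi$ is an isomorphism and $H$ is final. Conversely, if $H$ is final then $\phi$ is an isomorphism (final objects are uniquely isomorphic), and the $\ker(\psi)$-action on $Y$ is transported by $\phi$ to the $\ker(\psi_Y)$-action; it therefore remains only to prove that $\ker(\psi_Y)$ acts freely on $Y$.

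This last point is the heart of the matter. Freeness is geometric, so work over $\bk$: it suffices to show that for every geometric point $\tilde y$ of $Y$, writing $\bar x:=f(\tilde y)$, the orbit map $\ker(\psi_Y)_{\bk}\to Y_{\bk}$, $s\mapsto s\cdot\tilde y$, is a monomorphism. Let $\bar o\colon G_{\bk}\to X_{\bk}$, $g\mapsto g\cdot\bar x$, be the orbit map, whose effect on $\pi_1$ is $\rho_{\pi_1}$, and form the connected component of the pullback $\bar o^{*}Y_{\bk}=G_{\bk}\times_{X_{\bk}}Y_{\bk}$ through $(e_G,\tilde y)$. Its subgroup of $\pi_1(G_{\bk})$ is $\rho_{\pi_1}^{-1}(\Stab(\tilde y))$, where $\Stab(\tilde y)$ is the monodromy stabilizer of $\tilde y$ for the covering $f$; since $f$ is an $F$-torsor the monodromy of $\pi_1(X_{\bk})$ on the fibre is by left translation through $a$, so $\Stab(\tilde y)=\ker(a|_{\pi_1(X_{\bk})})$ for \emph{every} $\tilde y$, and the subgroup equals $\rho_{\pi_1}^{-1}(\ker(a|_{\pi_1(X_{\bk})}))=\ker(\beta)=N$. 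As $\psi_Y\colon H_{Y,\bk}\to G_{\bk}$ also corresponds to $N$, the orbit map $h\mapsto h\cdot\tilde y$ of the action of $H_{Y}$ on $Y$ — which is the unique lift of $\bar o\circ\psi_Y$ through $f$ sending $e_{H_Y}$ to $\tilde y$ — identifies $H_{Y,\bk}$ with that connected component of $\bar o^{*}Y_{\bk}$; taking fibres over $e_G$ exhibits $s\mapsto s\cdot\tilde y$ as the inclusion of (part of) the fibre $f^{-1}(\bar x)$ into $Y_{\bk}$, hence a monomorphism. So $\ker(\psi_Y)$ acts freely on $Y$. The main obstacle is precisely this identification of the cover $H_{Y,\bk}\to G_{\bk}$ with a connected component of the pullback of $Y_{\bk}\to X_{\bk}$ along the orbit maps; the delicate points are that $f$ being a torsor makes all monodromy stabilizers literally equal to $\ker(a|_{\pi_1(X_{\bk})})$, not merely conjugate, and that the orbit map of $\tilde y$ is exactly the lift one needs.
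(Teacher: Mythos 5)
Your proof is correct, and while the construction of the final object follows the same path as the paper (translate membership in $\CC_G(Y)$ into the vanishing of $\theta\circ\psi_{\pi_1}$ via Proposition \ref{BiDprop2.2} and Corollaire \ref{BiDrem2.2}, then invoke Corollaire \ref{BiDcor2.1} using the commutativity of $\pi_1(G_{\bk})$), your treatment of the crucial point --- the freeness of the $\Ker(\psi_Y)$-action --- is genuinely different. The paper first observes that $\CC_G(Y)$ is stable under base change along $G$-morphisms $X'\to X$, reduces to the case $k=\bk$ with $X$ a homogeneous space of $G$, notes that $Y$ is then homogeneous under $F\times H_Y$ with $\Ker(\psi_Y)$ central so that all stabilizers coincide, and finally uses the universal property of the final object to force this common stabilizer to be trivial. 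You instead prove freeness directly from the explicit identification $\Im(\psi_{Y,\pi_1})=N=\ker(a\circ\rho_{\pi_1})$: the orbit map through a geometric point $\tilde y$ realizes $H_{Y,\bk}$ as the connected component of $\bar o^*Y_{\bk}$ through $(e_G,\tilde y)$, whose degree over $G_{\bk}$ is exactly $[\pi_1(G_{\bk}):N]=|\Ker(\psi_Y)(\bk)|$, so the fibre over $e_G$ injects and the stabilizer of $\tilde y$ is trivial. Your key observation that for a torsor the monodromy stabilizers of all points of a fibre are literally equal to $\ker(a|_{\pi_1(X_{\bk})})$ (not merely conjugate) is what makes this work uniformly in $\tilde y$, and it plays the same role as the paper's centrality argument. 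What each approach buys: the paper's reduction keeps the argument at the level of the universal property and avoids any explicit covering-space computation, whereas your route is self-contained (it never uses finality to prove freeness), gives the stabilizer computation for an arbitrary point without passing to a homogeneous space, and makes the isomorphism $H_{Y,\bk}\cong$ (component of $\bar o^*Y_{\bk}$) explicit --- at the cost of more careful bookkeeping of subgroups of $\pi_1$ and base points, which you handle correctly since all the relevant kernels are insensitive to conjugation.
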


\begin{proof}
Dans la suite exacte (\ref{BiDprop2.2e1}), notons $\alpha:=\iota (\rho^*([Y]))\in H^1(G_{\bk},F)^{\pi_1(X)}$.
   Soit $\theta\in \Hom_{cont}(\pi_1(G_{\bk}),F(\bk))$ un \'el\'ement correspondant \`a $\alpha$ selon (\ref{BiDprop2.2e}).
D'apr\`es le corollaire \ref{BiDrem2.2} (3), le noyau $\Ker(\theta)$ est $\Gamma_k$-invariant.

La fonctorialit\'e de (\ref{BiDprop2.2e1}) et  la proposition \ref{BiDprop2.2} montrent qu'un objet $(H\xrightarrow{\psi}G)\in \CC_G$ est contenu dans $\CC_G(Y)$ 
si et seulement si $\psi_*(\alpha)=0\in H^1(H_{\bk},F)$, 
 i.e. $\theta\circ \psi_{\pi_1}=0 $ (Corollaire \ref{BiDrem2.2} (3)), o\`u $\psi_{\pi_1}: \pi_1(H_{\bk})\to \pi_1(G_{\bk})$.
Puisque $\pi_1(G_{\bk})$ est ab\'elien (\cite[Thm. 1]{Miy}),
le corollaire \ref{BiDcor2.1} implique l'existence de l'objet final de $\CC_G(Y)$.
 
L'argument ci-dessus montre que la cat\'egorie $\CC_G(Y)$ est stable par changement de base,
 i.e., pour toute $G$-vari\'et\'e $X'$ et tout $G$-morphisme $X'\to X$, 
 on a un $F$-torseur $Y':=Y\times_XX'\to X'$ et $\CC_G(Y')=\CC_G(Y)$ comme sous-cat\'egories de $\CC_G$.
 
 Soit $(H_Y\xrightarrow{\psi_Y}G)$ l'objet final de $\CC_G(Y)$.
 Il est l'objet final de $\CC_G(Y')$ aussi pour tout $Y'\to X'$ ci-dessus.
 
 Pour montrer que l'action de $\Ker(\psi_Y)$ est libre, 
 on peut supposer que $k=\bk$ et que $X$ est un espace homog\`ene de $G$.
 Dans ce cas, $Y$ est un espace homog\`ene de $F\times H_Y$ (Proposition \ref{BiDprop2.2} (2)).
 Puisque $\Ker(\psi_Y)$ est dans le centre de $F\times H_Y$,
  les stabilisateurs de $\Ker(\psi_Y)$ en tous les points $x\in X$ sont les m\^emes.
 La propri\'et\'e de l'objet final implique que l'action de $\Ker(\psi_Y)$ soit libre.

Soit $(H\xrightarrow{\psi}G)\in \CC_G(Y)$ un objet tel que l'action de $\ker(\psi)$ sur $Y$ est libre.
Soit $\phi: H\to H_Y$ l'homomorphisme canonique.
Puisque $\psi_Y$, $\psi$ sont finis surjectifs et que $H_Y$ est connexe, l'homomorphisme $\phi$ est fini surjectif.
 La proposition \ref{BiDprop2.2} (1) implique que $\phi$ est compatible avec l'action de $H$ et de $H_Y$.
Puisque l'action de $\Ker(\psi)$ sur $Y$ est libre, $\phi$ est un isomorphisme.
\end{proof}

\begin{defi}\label{BiDdef2.1}
L'objet final $(H_Y\xrightarrow{\psi_Y}G)$ de $\CC_G(Y)$ est appel\'e \emph{le groupe minimal compatible avec le $F$-torseur $Y$}.
\end{defi}

\begin{rem}\label{BiDrem2.3}
Soit $\rho_{\pi_1}: \pi_1(G_{\bk})\to\pi_1(X)$ l'homomorphisme dans le corollaire \ref{BiDrem2.2} 
et soit $\alpha$ un 1-cocycle de $\pi_1(X)$ en $F(\bk)$ qui correspond \`a $[Y]\in H^1(X,F)$.
Alors $\alpha|_{\pi_1(X_{\bk})}$ est un homomorphisme.
Par la d\'emonstration de la proposition \ref{BiDprop2.3}, 
le groupe minimal compatible au $F$-torseur $Y$ est d\'etermin\'e par $\Ker(\alpha\circ \rho_{\pi_1})$,
o\`u $\alpha\circ \rho_{\pi_1}: \pi_1(G_{\bk})\to F(\bk)$ est un homomorphisme.
 Donc ceci est d\'etermin\'e par $\Ker(\alpha|_{\Im(\rho_{\pi_1})})$.
\end{rem}

D'apr\`es la proposition \ref{BiDprop2.2} (3), $H_Y$ est aussi le groupe minimal compatible au $F_{\sigma}$-torseur $Y_{\sigma}$ pour tout $\sigma\in H^1(k,F)$.

\begin{cor}\label{BiDcor2.3}
Sous les notations et les hypoth\`eses ci-dessus, si $Y$ est g\'eom\'e\-triquement int\`egre sur $k$, 
alors il existe un homomorphisme injectif $\phi: \Ker(\psi_Y)\to F $ d'image centrale  compatible avec l'action de $\Ker(\psi_Y)$ et de $F$ sur $Y$.
\end{cor}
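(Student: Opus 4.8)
The plan is to construct $\phi$ by comparing, over $\bk$, the action of $N:=\Ker(\psi_Y)$ on $Y$ with the structural action of $F$, and then to descend to $k$.

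First, recall the relevant structure. By Proposition \ref{BiDprop2.2}, since $f\colon Y\to X$ is $H_Y$-compatible, $H_Y$ acts on $Y$ so that $f$ is an $H_Y$-morphism, and by part (2) this action commutes with the action of $F$; as $\psi_Y(N)=\{e\}$, the group $N$ acts trivially on $X$, hence $f(n\cdot y)=f(y)$ and $N$ preserves the fibres of $f$; moreover, by Proposition \ref{BiDprop2.3}, the action of $N$ on $Y$ is free. The $F$-torseur structure of $f$ assigns to any two geometric points $y,y'$ of $Y$ lying in one fibre the unique $a\in F$ with $y'=a\cdot y$; taking $y'=n\cdot y$ yields a morphism $c\colon N\times_k Y\to F$ characterised by $n\cdot y=c(n,y)\cdot y$.

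The key step is to show that $c$ is constant along the fibres of the first projection. Base changing to $\bk$, the variety $Y_{\bk}$ is integral, in particular connected, while $F_{\bk}$ is the finite constant $\bk$-group on $F(\bk)$; therefore, for every $\bk$-point $n$ of $N$, the morphism $c(n,-)\colon Y_{\bk}\to F_{\bk}$ is constant, with value $\phi(n)\in F(\bk)$, say. Hence $c=\phi\circ\mathrm{pr}_N$ over $\bk$ for a unique $\phi\colon N_{\bk}\to F_{\bk}$, so that $n\cdot y=\phi(n)\cdot y$ for all $y$. Since $Y$, $f$ and the actions of $N$ and $F$ are defined over $k$, and the fibre-constancy of $c$ is stable under $\Gamma_k$, the morphism $\phi$ is $\Gamma_k$-equivariant; by the equivalence of finite $k$-groups with finite $\Gamma_k$-groups it descends to a $k$-homomorphism $\phi\colon\Ker(\psi_Y)\to F$ still satisfying $n\cdot y=\phi(n)\cdot y$, i.e. compatible with the actions of $\Ker(\psi_Y)$ and of $F$ on $Y$.

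Finally, $\phi$ is injective because the $N$-action is free (if $\phi(n)=e$ then $n$ acts trivially on $Y$, so $n=e$); its image is central in $F$ because for $a\in F$ one gets $(a\phi(n))\cdot y=a\cdot(n\cdot y)=n\cdot(a\cdot y)=\phi(n)\cdot(a\cdot y)=(\phi(n)a)\cdot y$ for all $y$, whence $a\phi(n)=\phi(n)a$ by freeness of the $F$-action; and $\phi$ is multiplicative by the associativity of the two commuting actions on $Y$ (the centrality of $\Im(\phi)$ absorbs the usual left/right discrepancy). The only substantive ingredient above is the constancy of $c$, which is exactly the place where the hypothesis that $Y$ is geometrically integral is used, through the connectedness of $Y_{\bk}$; everything else is formal manipulation of two commuting free actions. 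Alternatively, $\phi$ may be read off cohomologically: by Remark \ref{BiDrem2.3} and Corollary \ref{BiDcor2.1}, $\Ker(\psi_Y)(\bk)$ is the image in $F(\bk)$ of $\pi_1(G_{\bk})$, hence of $\Im(\rho_{\pi_1})$, which by Corollary \ref{BiDrem2.2}(1) is central in $\pi_1(X_{\bk})$; geometric integrality of $Y$ forces the monodromy $\pi_1(X_{\bk})\to F(\bk)$ attached to $[Y]$ to be surjective, so this image is central in $F(\bk)$, $\Gamma_k$-equivariantly.
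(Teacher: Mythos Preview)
Your proof is correct and follows essentially the same route as the paper: you construct the comparison morphism $c\colon \Ker(\psi_Y)\times Y\to F$ via the torsor isomorphism $Y\times_X Y\cong F\times_k Y$, use geometric integrality of $Y$ to factor $c$ through the first projection, and then read off injectivity, centrality and multiplicativity from freeness of the two commuting actions. The paper's proof is the same argument stated more tersely (it works directly at the scheme level without explicitly passing to $\bk$ and descending), and it invokes Proposition~\ref{BiDprop2.2}(2) for centrality rather than verifying it by hand as you do; your added cohomological paragraph is a pleasant bonus but not needed.
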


\begin{proof}
L'action de $\Ker(\psi_Y)$ induit un morphisme: 
$$\Phi: \Ker(\psi_Y)\times Y\xrightarrow{\rho_{H_Y},pr_Y} Y\times_XY \iso F\times_kY \xrightarrow{pr_F}F,$$
o\`u $\rho_{H_Y}$ est l'action de $H_Y$.
Pour tous $h\in \Ker(\psi_Y), y\in Y$, on a $h\cdot y=\Phi(h,y)\cdot y$.

Puisque $Y$ est g\'eom\'e\-triquement int\`egre, il existe un morphisme $\phi: \Ker(\psi_Y)\to F $ tel que $\Phi=\phi\circ p_1$, 
o\`u $p_1: \Ker(\psi_Y)\times Y\to \Ker(\psi_Y)$ est la projection.
Puisque l'action de $F$ sur $Y$ est libre, $\phi$ est un homomorphisme.
La proposition \ref{BiDprop2.2} (2) implique que l'image de $\phi$ est centrale.
D'apr\`es la proposition \ref{BiDprop2.3}, l'action de $\Ker(\psi_Y)$ est libre et donc $\phi$ est injectif.
\end{proof}

Rappelons la d\'efinition de $\Br_G(X)$ dans la d\'efinition \ref{BiDdefBrainv}.

\begin{prop}\label{BiDcor2.3.1}
Soient $G$ un groupe alg\'ebrique connexe et $X$ une $G$-vari\'et\'e lisse g\'eom\'e\-triquement int\`egre.
Supposons qu'il existe un torseur universel de $n$-torsion $\CT_X\xrightarrow{f} X$ sous le groupe $S_X$.
Soit $H\xrightarrow{\psi}G$ le groupe minimal compatible au $S_X$-torseur $\CT_X$.
Alors, pour tout  \'el\'ement de $n$-torsion $\alpha\in \Br(X)$ et tout $\sigma\in H^1(k,S_X)$,
 on a $f_{\sigma}^*(\alpha)\in \Br_H(\CT_{X,\sigma})$, o\`u $f_{\sigma}^*: \Br(X)\to \Br(\CT_{X,\sigma})$ est l'homomorphisme induit par $f_{\sigma}: \CT_{X,\sigma}\to X$.
\end{prop}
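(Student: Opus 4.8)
On se ram\`enerait d'abord \`a la cohomologie \`a coefficients dans $\mu_n$ pour y appliquer le lemme \ref{BiDlem2.3.2}. Comme $\alpha$ est de $n$-torsion, la suite exacte de Kummer fournit un rel\`evement $\tilde\alpha\in H^2(X,\mu_n)$ de $\alpha$. Puisque $H\in\CC_G(\CT_X)$, le $S_X$-torseur $\CT_X$ est $H$-compatible, donc $\CT_{X,\sigma}$ l'est aussi (proposition \ref{BiDprop2.2} (3)); de plus $\CT_{X,\sigma}$ est g\'eom\'etriquement int\`egre, car $(\CT_{X,\sigma})_{\bk}\cong(\CT_X)_{\bk}$ l'est. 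Notons $\rho_{\CT}\colon H\times\CT_{X,\sigma}\to\CT_{X,\sigma}$ l'action de $H$ ainsi obtenue, pour laquelle $f_\sigma$ est un $H$-morphisme ($H$ agissant sur $X$ via $\psi$), et $\mathrm{pr}_1,\mathrm{pr}_2$ les projections de $H\times\CT_{X,\sigma}$, $p_1,p_2,\rho$ les projections et l'action de $G\times X$. Il suffirait d'\'etablir que $\rho_{\CT}^*(f_\sigma^*\tilde\alpha)-\mathrm{pr}_2^*(f_\sigma^*\tilde\alpha)$ appartient \`a $\mathrm{pr}_1^*H^2(H,\mu_n)$, puis d'en prendre l'image dans $\Br$.

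On utiliserait les carr\'es commutatifs traduisant que $f_\sigma$ est un $H$-morphisme, \`a savoir $\rho\circ(\psi\times f_\sigma)=f_\sigma\circ\rho_{\CT}$ et $p_2\circ(\psi\times f_\sigma)=f_\sigma\circ\mathrm{pr}_2$; ils donnent $\rho_{\CT}^*(f_\sigma^*\tilde\alpha)-\mathrm{pr}_2^*(f_\sigma^*\tilde\alpha)=(\psi\times f_\sigma)^*(\rho^*\tilde\alpha-p_2^*\tilde\alpha)$, ce qui ram\`ene \`a comprendre $\rho^*\tilde\alpha-p_2^*\tilde\alpha\in H^2(G\times X,\mu_n)$. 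On appliquerait alors le lemme \ref{BiDlem2.3.2} avec $U=G$ — qui a le point rationnel $e_G$ et poss\`ede l'unique torseur universel de $n$-torsion $\CT_G$ tel que $\CT_G|_{e_G}$ soit trivial — et $V=X$, d'o\`u une \'ecriture $\rho^*\tilde\alpha-p_2^*\tilde\alpha=p_1^*c+p_2^*d+\varepsilon(\phi)$ avec $c\in H^2_{e_G}(G,\mu_n)$, $d\in H^2(X,\mu_n)$ et $\phi\in\Hom_k(S_G,S_X^*)$. En restreignant \`a $e_G\times X\cong X$, le membre de gauche s'annule, de m\^eme que $p_1^*c$ (car $e_G^*c=0$) et $\varepsilon(\phi)=p_1^*(\phi_*[\CT_G])\cup p_2^*[\CT_X]$ (car $[\CT_G]|_{e_G}=0$); il reste $d=0$, donc $\rho^*\tilde\alpha-p_2^*\tilde\alpha=p_1^*c+\varepsilon(\phi)$.

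La derni\`ere \'etape consisterait \`a tirer cette \'egalit\'e en arri\`ere le long de $\psi\times f_\sigma\colon H\times\CT_{X,\sigma}\to G\times X$. Le terme $p_1^*c$ devient $\mathrm{pr}_1^*(\psi^*c)\in\mathrm{pr}_1^*H^2(H,\mu_n)$. Pour $\varepsilon(\phi)$, la fonctorialit\'e du cup-produit externe donne $(\psi\times f_\sigma)^*\varepsilon(\phi)=(\psi^*\phi_*[\CT_G])\cup(f_\sigma^*[\CT_X])$, cup-produit externe d'une classe de $H^1(H,S_X^*)$ et d'une classe de $H^1(\CT_{X,\sigma},S_X)$. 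Le point cl\'e est que $f_\sigma^*[\CT_X]$ provient de $H^1(k,S_X)$: en effet $f_\sigma^*[\CT_{X,\sigma}]=0$, l'image r\'eciproque d'un $S_X$-torseur sur son propre espace total \'etant le torseur trivial, tandis que $[\CT_X]-[\CT_{X,\sigma}]$ est l'image d'une classe de $H^1(k,S_X)$ par construction du tordu. Or le cup-produit externe d'une classe provenant de $H$ et d'une classe provenant de $\Spec k$ — donc aussi de $H$, via $H\to\Spec k$ — appartient \`a $\mathrm{pr}_1^*H^2(H,\mu_n)$; d'o\`u $(\psi\times f_\sigma)^*\varepsilon(\phi)\in\mathrm{pr}_1^*H^2(H,\mu_n)$. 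En sommant les deux contributions, $\rho_{\CT}^*(f_\sigma^*\tilde\alpha)-\mathrm{pr}_2^*(f_\sigma^*\tilde\alpha)\in\mathrm{pr}_1^*H^2(H,\mu_n)$, et le passage \`a $\Br$ fournit $f_\sigma^*(\alpha)\in\Br_H(\CT_{X,\sigma})$.

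Le reste rel\`eve de v\'erifications de routine. Le point le plus d\'elicat est le comportement du terme crois\'e $\varepsilon(\phi)$ sous le changement de base $\psi\times f_\sigma$, joint \`a l'appartenance $f_\sigma^*[\CT_X]\in\Im\bigl(H^1(k,S_X)\to H^1(\CT_{X,\sigma},S_X)\bigr)$ et au choix du torseur $\CT_G$ trivial en $e_G$ (sans lequel l'annulation de $p_2^*d$ ne serait pas visible). On notera que l'argument n'utilise que la $H$-compatibilit\'e de $\CT_X$, i.e. $H\in\CC_G(\CT_X)$, ce qui vaut notamment pour le groupe minimal $H=H_{\CT_X}$.
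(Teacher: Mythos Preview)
Your proof is correct and follows essentially the same approach as the paper: lift to $H^2(-,\mu_n)$ via Kummer, apply Lemma~\ref{BiDlem2.3.2} to $(G,X)$ to decompose $\rho^*\tilde\alpha-p_2^*\tilde\alpha$, and observe that the cross term $\varepsilon(\phi)$ dies (or lands in $\mathrm{pr}_1^*$) after pulling back along $\psi\times f_\sigma$. The only cosmetic differences are that the paper first reduces to $\sigma=0$ (so that $f^*[\CT_X]=0$ and $(\psi\times f)^*\varepsilon(\phi)$ vanishes outright), whereas you treat general $\sigma$ directly and use that $f_\sigma^*[\CT_X]$ comes from $H^1(k,S_X)$; and the paper suppresses the verification $d=0$ which you make explicit via restriction to $e_G\times X$.
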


\begin{proof}
On peut supposer que $\sigma=0\in H^1(k,S_X)$.

Notons $\rho_H: H\times \CT_X\to \CT_X$ l'action de $H$ 
et $p_{1,H}: H\times \CT_X\to H$, $p_{2,H}: H\times \CT_X\to \CT_X$ les deux projections.
Soit $\CT_G$ un torseur universel de $n$-torsion pour $G$ sous le groupe $S_G$.

Apppliquant le corollaire \ref{BiDlem2.3.2} \`a $(G,X)$, 
on obtient :
pour tout $\alpha_1\in H^2(X, \mu_n)$, il existe un $\phi \in \Hom(S_G,S_X^*)$ et un $\beta\in H^2(G,\mu_n)$ 
tels que $(\rho^*-p_2^*)(\alpha_1)=\varepsilon (\phi)+p_1^*(\beta)$.

Puisque $f^*([\CT_X])=0\in H^1(\CT_X,S_X)$,  on a
$$(\psi\times f)^*(\varepsilon (\phi))=(\psi\times f)^* (\phi_*([\CT_G])\cup [\CT_X] )=\phi_*(\psi^*([\CT_G]))\cup f^*([\CT_X])=0.$$
Alors $(\rho_H^*-p_{2,H}^*)(f^*(\alpha_1) )= (\psi\times f)^*((\rho^*-p_2^*)(\alpha_1))=(\psi\times f)^*(p_1^*(\beta))=p_{1,H}^*(\psi^*(\beta))$.

D'apr\`es la suite exacte de Kummer, $(\rho_H^*-p_{2,H}^*)(f^*(\alpha))\sbt p_{1,H}^*\Br(H)$, d'o\`u le r\'esultat.
\end{proof}

\section{Rappel sur le sous-groupe de Brauer invariant}\label{BiD4}

Dans toute cette section,  $k$ est un corps quelconque de caract\'eristique $0$. 
Sauf  mention explicite du contraire,  une vari\'et\'e est une $k$-vari\'et\'e.

Dans cette section, on rappelle des notions et des r\'esultats dans \cite[\S 3]{C1} sur le sous-groupe de Brauer invariant.

Pour la d\'efinition du sous-groupe de Brauer invariant on renvoie le lecteur \`a la d\'efinition \ref{BiDdefBrainv}.

Soit $\mathbf{AB}$ la cat\'egorie des groupes ab\'eliens. 
Soit $\mathbf{GX}$ la cat\'egorie  des couples $(G,X)$ avec $G$ un groupe alg\'ebrique connexe et $X$ une $G$-vari\'et\'e lisse,
 et un morphisme $(H,Y)\to (G,X)$ dans $\mathbf{GX}$ est un couple $(\psi,f)$ avec $\psi: H\to G$ un homomorphisme et $f: Y\to X$ un $H$-morphisme,
  o\`u l'action de $H$ sur $X$ est induite par $\psi$.
Par d\'efinition, 
$$\Br_{-}(-): \mathbf{GX}\to \mathbf{AB}: (G,X)\mapsto \Br_G(X)$$
 est un foncteur contravariant.

\begin{exam}\label{BiDexam4.1}
(1) (\cite[Lem. 3.6]{C1}) Soit $G$ un groupe lin\'eaire connexe. Alors $\Br_G(G)=\Br_1(G)$.

(2) (\cite[Prop. 3.9 (3)]{C1}) Soient $G$ un groupe lin\'eaire connexe, $G_0\sbt G$ un sous-groupe ferm\'e connexe et $X:=G/G_0$.
Alors  $$\Br_G(X)=\Br_1(X,G):=\ker(\Br(X)\to \Br(G_{\bk})).$$
Le groupe $\Br_1(X,G)$ est d\'efini par Borovoi et Demarche dans \cite{BD} pour \'etudier l'approximation forte de $X$.

(3) Soit $A$ une vari\'et\'e ab\'elienne. 
L'auteur ne sait pas identifier le groupe $\Br_A(A)$.
Par exemple, l'auteur ne sait pas si $\Br_A(A)\sbt \Br_1(A)$ ou si $\Br_A(A)\supset \Br_1(A)$.
\end{exam}

\medskip

Au vu de l'exemple  \ref{BiDexam4.1} (3), dans la suite du pr\'esent article, on suppose que $G$ est un groupe lin\'eaire.

Soient $G$ un groupe lin\'eaire connexe et $X$ une $G$-vari\'et\'e lisse g\'eom\'e\-triquement int\`egre.
Notons $\rho: G\times X\to X$ l'action et $p_1: G\times X\to G$, $p_2:G\times X\to X$ les deux projections.

(1) Puisque $\Br_1(G\times X)\cong \Br_e(G)\oplus \Br_1(X)$ (\cite[Lem. 6.6]{S}), on peut obtenir facilement (\cite[Prop. 3.2 (4)]{C1}):
\begin{equation}\label{BiDinvequa2.1}
\Br_1(X)\sbt \Br_G(X).
\end{equation}

(2)  Puisque $p_1^*|_{\Br_e(G)}: \Br_e(G)\to \Br(G\times X) $ est injectif, par la d\'efinition de $\Br_G(X)$,
 il existe un unique homomorphisme $\Br_G(X)\xrightarrow{\lambda}\Br_e(G) $ tel que (\cite[(3.4)]{C1})
$$p_1^* \circ \lambda=\rho^*-p_2^*: \Br_G(X)\to \Br(G\times X).$$
Le $\lambda: \Br_G(X)\to \Br_e(G)$ est appel\'e \emph{l'homomorphisme de Sansuc} (\cite[D\'ef. 3.8]{C1}).

(3)  Pour toute extension de corps $K/k$, et tous $x\in X(K)$, $g\in G(K)$, $\alpha\in \Br_G(X)$, on a  (\cite[Prop. 3.9 (1)]{C1}):
$$(g\cdot x)^*(\alpha )= g^*(\lambda (\alpha))+x^*(\alpha)\in \Br(K).$$
Alors, dans le cas o\`u $k$ est un corps de nombres, on a:
\begin{equation}\label{BiDinvequa2.3}
G(\RA_k)^{\Br_a(G)}\cdot X(\RA_k)^{\Br_G(X)}=X(\RA_k)^{\Br_G(X)}.
\end{equation}

La formule (\ref{BiDinvequa2.1}) et \cite[Cor. 3.6]{HS13} impliquent directement:

\begin{cor}\label{BiD4cor1}
Soit $(G,X)\in \mathbf{GX}$ un objet.
Si $X(\RA_k)^{\Br_G(X)}\neq\emptyset$, 
alors, pour tout entier $n \geq 2$, il existe un un torseur universel de $n$-torsion pour $X$.
\end{cor}

Pour un torseur sous un groupe lin\'eaire connexe, Sansuc a construit une suite exacte dans \cite[Prop. 6.10]{S}, qui est appel\'ee \emph{la suite exacte de Sansuc}.
La proposition suivante dit que les sous-groupes de Brauer invariant sont compatibles avec la suite exacte de Sansuc.

\begin{prop}\label{BiDRprop3}(\cite[Cor. 3.11(2)]{C1})
Soit $1\to N\to H\xrightarrow{\psi}G\to 1$ une suite exacte de groupes lin\'eaires connexes.
Soit $(\psi, f): (H,Y)\to (G,X)$ un morphisme dans $\mathbf{GX}$ tel que $X$ soit g\'eom\'e\-triquement int\`egre sur $k$ et $Y\to X $ soit un $N$-torseur, o\`u l'action de $N$ sur $Y$ est induite par celle de $H$.
Alors $f^*: \Br(X)\to \Br(Y)$ satisfait $(f^*)^{-1}\Br_H(Y)=\Br_G(X)$ 
et on a  une suite exacte, fonctorielle en $(X,Y,f,N)$:
$$\Pic(Y)\to  \Pic(N)\to \Br_G(X)\xrightarrow{f^*} \Br_H(Y)\xrightarrow{\lambda}\Br_e(N),$$
o\`u $\lambda: \Br_H(Y)\sbt \Br_N(Y)\to \Br_e(N)$ est l'homomorphisme de Sansuc.
\end{prop}

Pour une fibration $f:X\to T$ et tout $t\in T(k)$, on note $i_t: X_t\sbt X$ la fibre et
on a la sp\'ecialisation du groupe de Brauer $i_t^*: \Br(X)\to \Br(X_t)$.
La proposition suivante dit que, si la fibration $f$ est compatible avec des actions des groupes lin\'eaires, 
alors les sous-groupes de Brauer invariants sont compatibles avec la sp\'ecialisation du groupe de Brauer.

\begin{prop}\label{BiDRprop4}(\cite[Prop. 3.13]{C1})
Soit $1\to G_0\xrightarrow{\phi} G\xrightarrow{\psi}T\to 1$ une suite exacte de groupes lin\'eaires connexes avec $T$ un tore.
Soient $X$ une $G$-vari\'et\'e lisse g\'eom\'e\-triquement int\`egre et $X\xrightarrow{f}T$ un $G$-morphisme.
Notons $\Br_1(G)\xrightarrow{\phi_*}\Br_1(G_0)$ l'homomorphisme induit par $\phi$.
Alors, pour tout $t\in T(k)$, on a

(1)  la fibre $i_t: X_t\sbt X$ est $G_0$-invariante;

(2) on a une suite exacte naturelle
$$\Br_e(T)\to \Br_G(X)\xrightarrow{i_t^*} \Br_{G_0}(X_t)\to \coker (\phi_*);$$

(3) (\cite[Lem. 5.5]{C1}) si $k$ est un corps de nombres et $H^3(k,T^*)=0$, on a $\coker(\phi_*)=0$ et $i_t^*$ est surjectif.
\end{prop}

\section{La descente par rapport au sous-groupe de Brauer invariant}

Dans toute cette section,  $k$ est un corps de nombres. 
Sauf  mention explicite du contraire, une vari\'et\'e est  une $k$-vari\'et\'e.

La m\'ethode de descente des points ad\'eliques est \'etablie par Colliot-Th\'el\`ene et Sansuc dans \cite{CTS}.
Dans \cite{C1}, l'auteur \'etudie la m\'ethode de descente des points ad\'eliques orthogonaux aux sous-groupes de Brauer invariants et \'etablit le r\'esultat:
pour  un groupe lin\'eaire connexe $G$, une vari\'et\'e lisse g\'eom\'e\-triquement int\`egre $Z$ et  un $G$-torseur $p: X\to Z$, 

(1) on a (\cite[Thm. 5.9]{C1}):
\begin{equation}\label{BiDprop4.2e}
Z(\RA_k)^{\Br(Z)}=\cup_{\sigma\in H^1(k,G)}p_{\sigma}(X_{\sigma}(\RA_k)^{\Br_{G_{\sigma}}(X_{\sigma})});
\end{equation}

(2) si $G$ est un tore quasi-trivial, on a (\cite[Prop. 5.2]{C1})
\begin{equation}\label{BiDprop4.1e}
Z(\RA_k)^{(p^*)^{-1}B}=p(X(\RA_k)^{B}),
\end{equation}
pour tout sous-groupe $B\sbt \Br_G(X)$, o\`u $p^*: \Br(Z)\to \Br(X)$;

(3) pour tout homomorphisme surjectif $\psi: H\to G$ de groupes lin\'eaires connexes, on a (\cite[Thm. 5.1]{CLX}):
\begin{equation}\label{BiDprop4.1e2}
G(\RA_k)^{\Br_1(G)}=\psi(H(\RA_k)^{\Br_1(H)})\cdot G(k).
\end{equation}

La proposition \ref{BiDprop4.1} et la proposition \ref{BiDprop4.2} suivantes sont quelques variantes de ce r\'esultat.
Plus pr\'ecis\'ement, la proposition \ref{BiDprop4.1} est une variante de (2) pour $G$ un groupe fini commutatif et sa d\'emonstration utilise (2) et (3) mais pas (1).
La proposition \ref{BiDprop4.2} est une variante de (1) en rempla\c{c}ant ``$\Br$'' par ``$\et,\Br$'' et en rempla\c{c}ant ``$\Br_G$'' par ``$G-\et,\Br_G$'',
donc elle est une version limite de (1) pour tout $k$-torseur $Z'\to Z$ sous un $k$-groupe fini, et sa d\'emonstration utilise (1) mais pas (2) et (3).

\begin{prop}\label{BiDprop4.1}
Soient $G$, $H$ deux groupes lin\'eaires connexes et $\psi: H\to G$ un homomorphisme surjectif de noyau  $S$ fini.
Soient $X$ (resp. $Y$) une $G$-vari\'et\'e (resp. $H$-vari\'et\'e) lisse g\'eom\'etriquement int\`egre et $f: Y\to X$ un $H$-morphisme tels que $Y$ soit un $S$-torseur sur $X$, o\`u l'action de $S$ est induite par l'action de $H$.
Alors, pour tout $\sigma\in H^1(k,S)$, le tordu $Y_{\sigma}$ est une $H$-vari\'et\'e et on a:
$$X(\RA_k)^{\Br_G(X)}=\cup_{\sigma\in H^1(k,S)}f_{\sigma}(Y_{\sigma}(\RA_k)^{\Br_H(Y_{\sigma})}).$$
\end{prop}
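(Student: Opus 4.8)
The statement is an equality of two subsets of $X(\RA_k)$; I would dispatch the inclusion $\supseteq$ first, as it is purely formal. Since $S=\ker\psi$ is a finite normal subgroup of the connected group $H$ it is central, so twisting by $\sigma\in H^1(k,S)$ does not affect the $H$-action: $Y_\sigma$ is again an $H$-variety and $f_\sigma\colon Y_\sigma\to X$ an $H$-morphism lying over $\psi\colon H\to G$. Pulling the defining relation $\rho^*\alpha-p_2^*\alpha\in p_1^*\Br(G)$ of a class $\alpha\in\Br_G(X)$ back along $\psi\times f_\sigma$ gives $\rho_{Y_\sigma}^*f_\sigma^*\alpha-p_2^*f_\sigma^*\alpha\in p_1^*\psi^*\Br(G)\subset p_1^*\Br(H)$, hence $f_\sigma^*\Br_G(X)\subset\Br_H(Y_\sigma)$; therefore any adelic point of $Y_\sigma$ orthogonal to $\Br_H(Y_\sigma)$ maps to one orthogonal to $\Br_G(X)$, and taking the union over $\sigma$ yields $\supseteq$.

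For the inclusion $\subseteq$ the plan is to interpose a torsor under a quasi-trivial torus and reduce to the quasi-trivial case (\ref{BiDprop4.1e}). First I would choose an exact sequence $1\to S\to P\xrightarrow{\pi}Q\to 1$ of $k$-groups of multiplicative type with $P$ a quasi-trivial torus (surject a permutation Galois lattice onto $S^*$ and dualize); then $H^1(K,P)=0$ for every extension $K/k$ and $H^1(\RA_k,P)=0$. Set $\widetilde Y:=Y\times^S P$, where $S$ acts on $Y$ through $S\subset H$ and on $P$ by translation; the structure map $\widetilde f\colon\widetilde Y\to X$ is a torsor under $P$, and $\widetilde Y$ is smooth and geometrically integral, being a torsor under a connected group over such a base. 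As $\widetilde f$ is $P$-invariant one has trivially $\widetilde f^*\Br(X)\subset\Br_P(\widetilde Y)$, and as $P$ is quasi-trivial, $\widetilde f$ is onto on $k_v$-points for every $v$, hence onto on $\RA_k$-points, so every adelic point of $X$ is automatically orthogonal to $\ker(\widetilde f^*\colon\Br(X)\to\Br(\widetilde Y))$. Consequently $X(\RA_k)^{\Br_G(X)}=X(\RA_k)^{(\widetilde f^*)^{-1}(\widetilde f^*\Br_G(X))}$, and applying (\ref{BiDprop4.1e}) to the $P$-torsor $\widetilde f$ with $B=\widetilde f^*\Br_G(X)$ gives
\[
X(\RA_k)^{\Br_G(X)}=\widetilde f\bigl(\widetilde Y(\RA_k)^{\widetilde f^*\Br_G(X)}\bigr).
\]

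Next I would descend from $\widetilde Y$ back to the twists $Y_\sigma$. One checks that $\widetilde f^*(Y\to X)\cong (Y\times_k P\to\widetilde Y)$ as $S$-torsors (both are the quotient of $Y\times_kP$ by the antidiagonal copy of $S$), so the class of this $S$-torsor in $H^1(\widetilde Y,S)$ is $\widetilde f^*[Y]$, its twists are the $Y_\tau\times_k P$ for $\tau\in H^1(k,S)$ (using $H^1(k,P)=0$), and the composite $Y_\tau\times_kP\to\widetilde Y\xrightarrow{\widetilde f}X$ agrees with $Y_\tau\times_kP\to Y_\tau\xrightarrow{f_\tau}X$. Given $\widetilde y=(\widetilde y_v)\in\widetilde Y(\RA_k)^{\widetilde f^*\Br_G(X)}$, the local lifting obstructions $\widetilde y_v^*(\widetilde f^*[Y])\in H^1(k_v,S)$ satisfy, for every $c\in H^1(k,S^D)$ ($S^D$ the Cartier dual), $\sum_v\langle\widetilde y_v^*(\widetilde f^*[Y]),\,c|_v\rangle=\sum_v\inv_v\bigl(\widetilde y_v^*\widetilde f^*(b_c)\bigr)$, where $b_c:=[Y]\cup c\in\Br_1(X)\subset\Br_G(X)$ by (\ref{BiDinvequa2.1}); since $\widetilde f^*b_c\in\widetilde f^*\Br_G(X)$ this sum vanishes, so by Poitou--Tate the obstructions come from a single $\sigma\in H^1(k,S)$. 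Hence $\widetilde y$ lifts to a point of $(Y_\sigma\times_kP)(\RA_k)$ whose $Y_\sigma$-component $(y_v)$ satisfies $f_\sigma(y_v)=\widetilde f(\widetilde y_v)$, which already shows $X(\RA_k)^{\Br_G(X)}\subset\bigcup_\sigma f_\sigma(Y_\sigma(\RA_k))$.

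It remains to correct the lift so as to make it orthogonal to $\Br_H(Y_\sigma)$. The residual freedom in $(y_v)$ is a translation by some $(s_v)\in\prod_v S(k_v)$, and by the Sansuc formula (the display preceding (\ref{BiDinvequa2.3}), i.e.\ \cite[Prop. 3.9 (1)]{C1}) the obstruction $\xi\mapsto\sum_v\inv_v(y_v^*\xi)$ on $\Br_H(Y_\sigma)$ is altered by such a translation through $\xi\mapsto\sum_v\inv_v\bigl(s_v^*(\lambda(\xi)|_S)\bigr)$, where $\lambda\colon\Br_H(Y_\sigma)\to\Br_e(H)$ is the Sansuc homomorphism; class field theory for the finite group $S$ (the pairing $\prod_v S(k_v)\times\Br_e(S)\to\BQ/\BZ$) then lets one choose $(s_v)$ killing this obstruction, producing the desired point of $Y_\sigma(\RA_k)^{\Br_H(Y_\sigma)}$. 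I expect this last step to be the main obstacle: one must verify that the obstruction functional really factors through $\Br_H(Y_\sigma)\xrightarrow{\lambda}\Br_e(H)\to\Br_e(S)$ (a ``descent of invariance'' along the finite cover $f_\sigma$) and lies in the image of $\prod_v S(k_v)$, and it is precisely here that one genuinely needs the full group $\Br_G(X)$ rather than only $\Br_1(X)$, together with the behaviour of $\lambda$ under the resolution $1\to S\to P\to Q\to1$.
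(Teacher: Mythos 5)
Your opening moves match the paper's: the inclusion $\supseteq$ by functoriality, and the push-out of the $S$-torsor along an embedding of $S$ into a quasi-trivial torus so as to invoke the quasi-trivial descent (\ref{BiDprop4.1e}). But the way you apply (\ref{BiDprop4.1e}) creates the gap you yourself flag at the end, and that gap is genuine. You take $B=\widetilde f^{*}\Br_G(X)$, so the adelic point you produce on $\widetilde Y=Y\times^{S}P$ is orthogonal only to classes pulled back from $X$; after the (correct) Poitou--Tate step, the point $(y_v)$ on $Y_{\sigma}$ is therefore only known to be orthogonal to $f_{\sigma}^{*}\Br_G(X)$. But $\Br_H(Y_{\sigma})$ is in general much larger --- it contains all of $\Br_1(Y_{\sigma})$ by (\ref{BiDinvequa2.1}), and $\Br_1(Y_{\sigma})/f_{\sigma}^{*}\Br_1(X)$ is typically nonzero. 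Your proposed repair --- translating by $(s_v)\in\prod_v S(k_v)$ and using the Sansuc formula --- only changes the pairing with $\xi\in\Br_H(Y_{\sigma})$ by $\sum_v \inv_v\bigl(s_v^{*}(\lambda(\xi)|_S)\bigr)$, so it cannot affect the value of the obstruction functional on the kernel of $\Br_H(Y_{\sigma})\to\Br_e(S)$, and nothing in your argument shows that the functional vanishes there. This is not a detail to be waved at: it is the whole content of the statement.

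The paper avoids this by running the quasi-trivial descent with the \emph{full} invariant group $B=\Br_{H_0}(Y_0)$ of the intermediate torsor $Y_0=Y\times^{S}T_0$ (using \cite[Cor. 3.11 (2)]{C1} to identify $X(\RA_k)^{\Br_G(X)}$ with $f_0(Y_0(\RA_k)^{\Br_{H_0}(Y_0)})$), and then exploits two inputs absent from your proposal: the surjectivity of the restriction $\Br_{H_0}(Y_0)\to\Br_H(\phi^{-1}(t))$ onto the invariant Brauer group of each fibre $Y_{\partial(t)}$ (\cite[Lem. 5.5 et Prop. 3.13]{C1}), which is what transports orthogonality from $Y_0$ down to the twist; and, in place of your Poitou--Tate argument, the strong approximation statement $T(\RA_k)^{\Br_1(T)}=\varphi(T_0(\RA_k)^{\Br_1(T_0)})\cdot T(k)$ of \cite[Thm. 5.1]{CLX} for the quotient torus $T=T_0/S$ --- which is why the resolution is chosen with $H^3(k,T^{*})=0$ (\cite[Lem. 5.4]{C1}), a condition your construction of $P$ does not impose. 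To salvage your route you would at minimum need to replace $\widetilde f^{*}\Br_G(X)$ by the full invariant Brauer group of $\widetilde Y$ and prove the analogous surjectivity of restriction to the fibres, at which point you are essentially reproducing the paper's proof.
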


\begin{proof}
On construira les diagrammes ci-dessous
$$\xymatrix{0\ar[r]&S\ar[r]\ar[d]\ar@{}[rd]|{\lrcorner}&  T_0\ar[r]^{\varphi}\ar[d]& T\ar[d]^=\ar[r]   &0 &&&\\
&H\ar[r]\ar[rd]_{\psi} &H_0\ar[r]\ar[d]^{\psi_0} &T &\ar@{}[d]|{\text{et}} &Y\ar[r]^i\ar[rd]_f &Y_0\ar[r]^{\phi}\ar[d]^{f_0} &T\\ &&G&&& &X& } $$
o\`u le diagramme \`a gauche est un diagramme 
de groupes alg\'ebriques, le diagramme \`a droite est un diagramme de vari\'et\'es lisses, 
 chaque groupe dans le diagramme \`a gauche agit sur la vari\'et\'e dans le diagramme \`a droite avec le m\^eme position et ces actions sont compatibles avec tous les morphismes.

\medskip

Puisque $H$ est connexe, $S$ est contenu dans le centre de $H$. Donc $S$ est commutatif.
Une r\'esolution coflasque (\cite[Prop. 1.3]{CTS1}) induit une suite exacte
\begin{equation}\label{BiDprop4.1e1}
0\to S\to T_0\xrightarrow{\varphi} T\to 0
\end{equation}
o\`u  $T_0$  est un tore quasi-trivial et $T$ est un tore coflasque, i.e. $H^1(k',T^*)=0$ pour toute extension $k'/k$.
 Puisque $H^3(k,T^*) \cong \prod_{v\in \infty_k}H^3(k_v,T^*) \cong  \prod_{v\in \infty_k}H^1(k_v,T^*)$ (\cite[Lem. 5.4]{C1}), on a $H^3(k,T^*)=0$.

Soit $H_0:=H\times^ST_0$. 
Alors $H_0$ est un groupe lin\'eaire connexe et $H \xrightarrow{\psi} G$ induit une suite exacte 
$$1\to T_0\to H_0\xrightarrow{\psi_0} G\to 1.$$
Soit $Y_0:=Y\times^ST_0$. Notons $i: Y\to Y_0$ l'immersion ferm\'ee canonique. 
Alors $ Y_0$ est une $H_0$-vari\'et\'e et $f$ induit un $H_0$-morphisme $Y_0\xrightarrow{f_0} X$ 
tels que $f_0$ est un $T_0$-torseur.
D'apr\`es (\ref{BiDprop4.1e}) et la proposition \ref{BiDRprop3}, on a 
$$X(\RA_k)^{\Br_G(X)}=f_0(Y_0(\RA_k)^{\Br_{H_0}(Y_0)}).$$

L'isomorphisme $Y_0\times^{T_0}T\cong Y\times^ST_0\times^{T_0}T\cong X\times T$ 
induit un $T_0$-morphisme $\phi: Y_0\to T$ tel que $\phi^{-1}(e_T)=i(Y)$.
D'apr\`es des arguments classiques (voir la d\'emonstration de \cite[Thm. 5.9]{C1}), 
pour tout $t\in T(k)$, on a $\phi^{-1}(t)\cong Y_{\partial(t)}$ 
et le morphisme $\phi^{-1}(t)\hookrightarrow Y_0\xrightarrow{f_0}X $ est exactement $f_{\partial(t)}$, 
o\`u $\partial: T(k)\to H^1(k,S)$ est l'homomorphisme induit par (\ref{BiDprop4.1e1}).
Puisque $H^3(k,T^*)=0$, d'apr\`es la proposition \ref{BiDRprop4}, $\phi^{-1}(t)$ est une $H$-vari\'et\'e 
et l'homomorphisme canonique $\Br_{H_0}(Y_0)\to \Br_H(\phi^{-1}(t))$ est surjectif pour tout $t\in T(k)$.

D'apr\`es (\ref{BiDinvequa2.3}), $Y_0(\RA_k)^{\Br_{H_0}(Y_0)}$ est $T_0(\RA_k)^{\Br_1(T_0)}$-invariant.
On applique (\ref{BiDprop4.1e2}) \`a $\varphi$ et on a 
$$T(\RA_k)^{\Br_1(T)}=\varphi(T_0(\RA_k)^{\Br_1(T_0)})\cdot T(k).$$
Puisque $\phi(Y_0(\RA_k)^{\Br_{H_0}(Y_0)})\sbt T(\RA_k)^{\Br_1(T)}$, on a:
$$Y_0(\RA_k)^{\Br_{H_0}(Y_0)}=T_0(\RA_k)^{\Br_1(T_0)}\cdot (\sqcup_{t\in T(k)}\phi^{-1}(t)(\RA_k)^{\Br_H(\phi^{-1}(t))}), $$
et donc $X(\RA_k)^{\Br_G(X)}=f_0 [\sqcup_{t\in T(k)}\phi^{-1}(t)(\RA_k)^{\Br_H(\phi^{-1}(t))}]
=\cup_{t\in T(k)}f_{\partial(t)}[Y_{\partial(t)}(\RA_k)^{\Br_H(Y_{\partial(t)})}] .$
\end{proof}

Rappelons la d\'efinition de $X(\RA_k)^{G-\et,\Br_G}$ dans (\ref{BiDdef1e1}).

Pour toute vari\'et\'e lisse $X$, d\'efinissons $X(\RA_k^{nc})$ l'espace des points ad\'eliques de $X$ hors des places complexes,
i.e. on a 
$X(\RA_k)\cong (\prod_{v\ \text{complexe}}X(k_v))\times X(\RA_k^{nc}).$
De plus, on a:
\begin{equation}\label{BiDlem4.2.1-e}
X(\RA_k)^{ob}\cong (\prod_{v\ \text{complexe}}X(k_v))\times X(\RA_k^{nc})^{ob}
\end{equation}
pour l'obstruction $ob=\Br(X) $ ou  $ob=\Br_1(X)$ ou $ob=\et, \Br$ ou, si $X$ est une $G$-vari\'et\'e pour un groupe lin\'eaire connexe $G$, pour $ob=\Br_G(X)$ ou $ob=G-\et,\Br_G$.

Le lemme suivant est bien connu (voir \cite[Lem. 2.2.8]{Dth} pour une variante).

\begin{lem}\label{BiDlem4.2.1}
Soient $X$ une vari\'et\'e lisse et $\{X_i\}_{i\in I}$ les composantes connexes de $X$ 
telles que $X_i$ soit g\'eom\'etriquement int\`egre pour tout $i\in I$.
Alors on a:
$$X(\RA_k^{nc})^{\Br_1(X)}=\coprod_{i\in I}  X_i(\RA_k^{nc})^{\Br_1(X_i)},\ \ \  
X(\RA_k^{nc})^{\et, \Br}=\coprod_{i\in I}  X_i(\RA_k^{nc})^{\et,\Br}$$
et, si $X$ est une $G$-vari\'et\'e pour un groupe lin\'eaire connexe $G$, on a:
$$X(\RA_k^{nc})^{\Br_G(X)}=\coprod_{i\in I}  X_i(\RA_k^{nc})^{\Br_G(X_i)} \ \ \ \text{et} \ \ \  
X(\RA_k^{nc})^{G-\et,\Br_G}=\coprod_{i\in I}  X_i(\RA_k^{nc})^{G-\et, \Br_G} .$$
\end{lem}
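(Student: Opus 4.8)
\emph{Plan of proof.} All four identities follow from a single geometric fact: an adelic point of $X$ away from the complex places which is orthogonal to $\Br_1(X)$ lies entirely inside one connected component $X_j$. Once this is available, each identity reduces to a routine comparison of obstructions between $X$ and its components, since in every case the obstruction in play refines the $\Br_1$-obstruction. For the bookkeeping, since $X=\coprod_{i\in I}X_i$ with $I$ finite and each $X_i$ smooth geometrically integral, base change gives $X_{\bk}=\coprod_i (X_i)_{\bk}$ with $(X_i)_{\bk}$ integral, so $\Br(X)=\prod_i\Br(X_i)$ and $\Br_1(X)=\prod_i\Br_1(X_i)$. If moreover $X$ is a $G$-variety with $G$ connected, then $G$ (being geometrically connected with the rational point $e_G$) preserves every connected component, so each $X_i$ is a $G$-variety and, straight from part (2) of the definition of the invariant Brauer subgroup, $\Br_G(X)=\prod_i\Br_G(X_i)$; combined with (\ref{BiDinvequa2.1}) this gives $\Br_1(X)\subseteq\Br_G(X)$. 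Taking the trivial finite torseur $Y=X$ in the definitions of the étale obstructions yields $X(\RA_k)^{\et,\Br}\subseteq X(\RA_k)^{\Br(X)}\subseteq X(\RA_k)^{\Br_1(X)}$ and $X(\RA_k)^{G-\et,\Br_G}\subseteq X(\RA_k)^{\Br_G(X)}\subseteq X(\RA_k)^{\Br_1(X)}$, and by (\ref{BiDlem4.2.1-e}) the same inclusions hold with $\RA_k^{nc}$ in place of $\RA_k$.

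Next I would prove the core claim: if $(x_v)\in X(\RA_k^{nc})^{\Br_1(X)}$, there is a unique $j\in I$ with $x_v\in X_j(k_v)$ for every non-complex $v$. For each such $v$ let $i(v)$ be the index of the component containing $x_v$. If two distinct indices $j,j'$ were realized, choose $v_0$ with $i(v_0)=j$ and $v_1$ with $i(v_1)=j'$; since $v_0$ and $v_1$ are non-complex, class field theory provides $\alpha\in\Br(k)$ with $\inv_{v_0}(\alpha)=\inv_{v_1}(\alpha)=1/2$ and $\inv_v(\alpha)=0$ otherwise. Let $\beta\in\Br_1(X)=\prod_i\Br_1(X_i)$ be equal to the constant class $\alpha$ on $X_j$ and to $0$ on all other components. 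Evaluating at $(x_v)$ and using $\beta(x_v)=(\beta|_{X_{i(v)}})(x_v)$, one gets $\sum_v\inv_v(\beta(x_v))=\sum_{v:\,i(v)=j}\inv_v(\alpha)=\inv_{v_0}(\alpha)=1/2\neq 0$, contradicting orthogonality to $\Br_1(X)$. Hence $(x_v)$ lies in $X_j(\RA_k^{nc})$ for a unique $j$; since $X_i(k_v)\cap X_{i'}(k_v)=\emptyset$ for $i\neq i'$, this also shows the union on the right-hand side of each identity is disjoint.

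It then remains to finish the four equalities. For the obstructions $\Br_1(X)$ and $\Br_G(X)$ this is immediate: by the core claim a point of $X(\RA_k^{nc})^{\Br_1(X)}$ (resp. $X(\RA_k^{nc})^{\Br_G(X)}$) lies in some $X_j(\RA_k^{nc})$, and since $\Br_1(X)\to\Br_1(X_j)$ (resp. $\Br_G(X)\to\Br_G(X_j)$) is surjective by the product decomposition while $\beta(x_v)=(\beta|_{X_j})(x_v)$ whenever $x_v\in X_j$, orthogonality over $X$ is equivalent to orthogonality over $X_j$, giving both inclusions. For $\et,\Br$ and $G-\et,\Br_G$ one must in addition move finite torseurs between $X$ and $X_j$: given a finite torseur $W\to X_j$ (taken $G$-compatible in the second case), extend it to $f\colon Y\to X$ with $Y|_{X_j}=W$ and $Y|_{X_i}=F\times X_i$ for $i\neq j$ (letting $G$ act trivially on the $F$-factor, so that $f$ is a $G$-morphism); conversely, restrict a finite torseur $f\colon Y\to X$ to $Y\times_X X_j\to X_j$. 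Since twisting commutes with base change along the open and closed immersion $X_j\hookrightarrow X$, and since $\Br(Y_\sigma)$ — resp. $\Br_G(Y_\sigma)$, with $Y_\sigma$ a $G$-variety by Proposition \ref{BiDprop2.2}(3) — decomposes as a product over the connected components of $Y_\sigma$ and surjects onto the corresponding group of the part of $Y_\sigma$ over $X_j$, an adelic point over $(x_v)$ orthogonal to the group on the smaller torseur lifts to one orthogonal to the group on the larger torseur, and vice versa; running this through the intersections over all finite (resp. $G$-compatible) torseurs yields the last two equalities.

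The only genuinely non-formal ingredient is the class-field-theoretic construction, in the core claim, of a global Brauer class with prescribed local invariants summing to zero; everything else is bookkeeping with the product decompositions of the various Brauer groups over connected components and with torseur twisting and base change, for which the hypotheses that each $X_i$ is geometrically integral and that $G$ is connected (so it preserves the components) are precisely what is needed for the decompositions to hold on the nose.
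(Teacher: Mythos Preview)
Your proof is correct and follows essentially the same strategy as the paper's: both establish that a point of $X(\RA_k^{nc})^{\Br_1(X)}$ must lie in a single component by pairing it against constant Brauer classes supported on one component, and then deduce the remaining equalities from the product decomposition of the relevant Brauer groups and torsor sets over the $X_i$. The only difference is in packaging: the paper phrases the core claim via the structure map $\phi\colon X\to\pi_0(X)$ and invokes \cite[Prop.~3.3]{LX} for the equality $\pi_0(X)(\RA_k^{nc})^{\Br(\pi_0(X))}=\pi_0(X)(k)$ (here $\pi_0(X)\cong\coprod_i\Spec k$, so $\phi^*\Br(\pi_0(X))$ is exactly the group of tuples of constant classes you use), whereas you build the separating class $\alpha\in\Br(k)$ explicitly from the reciprocity sequence; your torsor extension/restriction argument for the two \'etale cases is also spelled out, while the paper leaves it implicit in the direct-sum statement.
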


\begin{proof}
Puisque le groupe de Brauer (resp. le sous-groupe de Brauer $G$-invariant, resp. l'ensemble des $F$-torseurs, resp. l'ensemble des $F$-torseurs $G$-compatibles pour un $k$-groupe fini $F$) de $X$
est la somme directe de celui des composantes connexes de $X$, on obtient l'inclusion $\supset $ dans les quatre cas ci-dessus.

Par ailleurs, soit $\pi_0(X)$ le sch\'ema des composantes connexes g\'eom\'etriques de $X$, 
i.e. $\pi_0(X)$ est un $k$-sch\'ema fini \'etale
et il existe un $k$-morphisme surjectif $\phi: X\to \pi_0(X)$ de fibres g\'eom\'e\-triquement int\`egres.
Pour tout $k$-sch\'ema $V$ fini \'etale connexe, $V(\RA_k^{nc})\neq\emptyset$ implique $V\cong Spec\ k$.
D'apr\`es \cite[Prop. 3.3]{LX} (un r\'esultat inspir\'e par Stoll), on a $\pi_0(X)(\RA_k^{nc})^{\Br(\pi_0(X))}=\pi_0(X)(k)$.
Par d\'efinition, $\phi^*(\Br(\pi_0(X)))\sbt \Br_1(X)$, $\phi^*(\Br(\pi_0(X)))\sbt \Br_G(X)$ et donc on obtient l'inclusion $\sbt$.
\end{proof}

Les deux lemmes suivants sont bien connus.

\begin{lem}\label{BiDlem4.2.2}
Soient $X$ une vari\'et\'e lisse, $L$ un groupe lin\'eaire quelconque et $h: V\to X$ un $L$-torseur.
Alors, pour tout $x\in X(\RA_k)$, l'ensemble $\{\sigma\in H^1(k,L):\ x\in h_{\sigma}(V_{\sigma}(\RA_k))\}$ est fini.
\end{lem}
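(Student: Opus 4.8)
The plan is to translate the condition ``$x\in h_\sigma(V_\sigma(\RA_k))$'' into a system of purely local cohomological conditions on the class $[\sigma]$, and then to reduce to a standard finiteness theorem in Galois cohomology.

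First I would carry out the local reformulation. Write $x=(x_v)_{v\in\Omega_k}$ and let $[V]\in H^1(X,L)$ denote the class of the torsor $h$. Fix a place $v$; pulling back along $x_v\colon\Spec k_v\to X$, the standard twisting formalism for torsors (cf.\ \cite{Se}; see also \cite{Sk1}) shows that $x_v$ lifts to $V_\sigma$ over $k_v$, i.e.\ $x_v\in h_\sigma(V_\sigma(k_v))$, if and only if $x_v^*([V])=\loc_v([\sigma])$ in $H^1(k_v,L)$, where $\loc_v\colon H^1(k,L)\to H^1(k_v,L)$ is the localization map. Here one uses that twisting by $\sigma$ turns the structure group $L$ into $L_\sigma$ and that the resulting bijection $H^1(k_v,L)\xrightarrow{\ \sim\ }H^1(k_v,L_\sigma)$ sends $\loc_v([\sigma])$ to the neutral class. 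Consequently
$$\{\sigma\in H^1(k,L)\ :\ x\in h_\sigma(V_\sigma(\RA_k))\}=\{\sigma\in H^1(k,L)\ :\ \loc_v([\sigma])=x_v^*([V])\text{ for all }v\in\Omega_k\}=:\Sigma_x .$$

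If $\Sigma_x=\emptyset$ there is nothing to prove. Otherwise I would fix $[\sigma_0]\in\Sigma_x$ and twist everything by $\sigma_0$: set $L':=L_{\sigma_0}$, $V':=V_{\sigma_0}$, and $h':=h_{\sigma_0}\colon V'\to X$. The twisting bijection $H^1(k,L)\xrightarrow{\ \sim\ }H^1(k,L')$ carries $[\sigma_0]$ to the neutral class, is compatible with every $\loc_v$, and identifies the family of twists $(V_\sigma)_\sigma$ with $(V'_{\sigma'})_{\sigma'}$ over $X$. Since $[\sigma_0]\in\Sigma_x$, the reformulation of the previous paragraph applied to $\sigma_0$ gives $x_v^*([V'])=0$ for every $v$, so this bijection identifies $\Sigma_x$ with $\{\sigma'\in H^1(k,L')\ :\ \loc_v([\sigma'])=0\text{ for all }v\}=\Sha^1(k,L')$. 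The proof then concludes by invoking the finiteness of $\Sha^1$ of a linear algebraic group over a number field (Borel--Serre): for non-connected $L'$ one reduces to the connected case by d\'evissage along the identity component, the finite quotient being handled by Hermite--Minkowski, since a torsor under a finite group that is split at every place is unramified everywhere. Thus $\Sigma_x$ is finite.

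The only step requiring genuine care is the local reformulation of the first paragraph: since $L$ is not assumed commutative, one must keep track of left/right conventions and of the change of structure group under twisting; everything else is formal bookkeeping together with a citation. If one prefers to avoid quoting the finiteness of $\Sha^1$, an equivalent route is to spread out: choose a finite set $S$ of places containing the archimedean ones, an $\CO_S$-model $\CX$ of $X$, a smooth affine $\CO_S$-group scheme $\CL$ with generic fibre $L$, and an $\CL$-torsor $\CV\to\CX$ extending $h$, and enlarge $S$ so that $x_v\in\CX(\CO_v)$ for every $v\notin S$; then each $x_v^*([V])$ with $v\notin S$ is unramified at $v$, hence every $\sigma\in\Sigma_x$ is unramified outside $S$, and the set of classes in $H^1(k,L)$ unramified outside $S$ is finite --- it is the image of the finite set $H^1(\CO_S,\CL)$.
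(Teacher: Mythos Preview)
Your proof is correct and follows exactly the route the paper takes: the paper's one-line proof simply invokes the finiteness of $\Sha^1(k,L_\sigma)$ for every $\sigma$ (citing \cite[\S III.4.6]{Se}), and you have supplied the details behind that invocation --- the local reformulation $x_v^*[V]=\loc_v[\sigma]$ and the twisting bijection identifying $\Sigma_x$ with $\Sha^1(k,L_{\sigma_0})$. Your optional spreading-out argument is a legitimate alternative but not the one the paper uses.
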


\begin{proof}
Le r\'esultat d\'ecoule du fait que, pour tout $\sigma\in H^1(k,L)$, l'ensemble $\Sha^1(k,L_{\sigma})$ est fini (\cite[\S III.4.6]{Se}).
\end{proof}

 Voyons \cite[Prop. 5.3.2]{sko} et \cite[Lem. 6.3]{CDX} pour des r\'esultats similaires.

\begin{lem}(M. Stoll \cite{St}, cf. \cite[Lem. 7.1]{CDX}) \label{BiDlem4.2.3}
Soit $X$ une vari\'et\'e lisse g\'eom\'e\-triquement int\`egre, $F$ un $k$-groupe fini et $f: Y\to X$ un $F$-torseur.
Supposons qu'il existe un $x\in X(\RA_k)^{\et,\Br}$.
Alors il existe un $\sigma\in H^1(k,F)$, un sous-groupe ferm\'e $F'\sbt F_{\sigma}$, une composante connexe $Y'\sbt Y_{\sigma}$ 
tels que $Y'$ soit g\'eom\'e\-triquement int\`egre et  $F'$-invariant, $f':=f_{\sigma}|_{Y'}: Y'\to X$ soit un $F'$-torseur et que $x\in f'(Y'(\RA_k)^{\Br(Y')})$.
\end{lem}

La proposition suivante est une \'etape interm\'ediaire importante dans la d\'emonstration du th\'eor\`eme \ref{BiDthm2}.

\begin{prop}\label{BiDprop4.2}
Soient $G$ un groupe lin\'eaire connexe, $Z$ une vari\'et\'e lisse g\'eom\'e\-triquement int\`egre et $p: X\to Z$ un $G$-torseur.
Alors: 
$$Z(\RA_k)^{\et, \Br}=\cup_{\sigma\in H^1(k,G)}p_{\sigma}(X_{\sigma}(\RA_k)^{G_\sigma-\et,\Br_{G_{\sigma}}}).$$
\end{prop}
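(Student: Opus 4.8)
The plan is to prove the two inclusions separately, using \eqref{BiDprop4.2e} from \cite{C1} as the structural backbone and Theorem \ref{BiDthm1} to upgrade the invariant étale obstruction on the fibers to the ordinary one. First I would establish the inclusion $\supseteq$: given $\sigma\in H^1(k,G)$ and an adelic point $(x_v)\in X_\sigma(\RA_k)^{G_\sigma-\et,\Br_{G_\sigma}}$, I must show $p_\sigma((x_v))\in Z(\RA_k)^{\et,\Br}$, i.e. for every finite $k$-group $F$ and every $F$-torseur $W\to Z$, the point $p_\sigma((x_v))$ lies in $\bigcup_{\tau} g_\tau(W_\tau(\RA_k)^{\Br(W_\tau)})$. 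Pulling $W$ back along $p_\sigma$ gives an $F$-torseur $W':=W\times_Z X_\sigma\to X_\sigma$, which is automatically $G_\sigma$-compatible (it is a pullback from the base of the $G_\sigma$-torseor $p_\sigma$, so Proposition \ref{BiDprop2.2} applies via the trivial lift, or more directly $W'$ carries the diagonal $G_\sigma$-action). By hypothesis $(x_v)$ lifts, for a suitable $\tau\in H^1(k,F)$, to $(y_v)\in W'_\tau(\RA_k)^{\Br_{G_\sigma}(W'_\tau)}$. Using Corollary \ref{BiDcor2.2} the torseur $W'_\tau\to X_\sigma$ descends to the $F_\tau$-torseur $W_\tau\to Z$, and the compatibility $\Br(W_\tau)\subseteq\Br_{G_\sigma}(W'_\tau)$ (pullback lands in the invariant subgroup, as $W'_\tau$ is a $G_\sigma$-torseur over $W_\tau$ and \eqref{BiDprop4.2e}-type considerations give $\Br(W_\tau)=\Br_{G_\sigma}(W'_\tau)^{\text{descent}}$, or simply that the composite $W'_\tau\to W_\tau$ is a $G_\sigma$-torseur so $\Br(W_\tau)$ pulls back into $\Br_{G_\sigma}(W'_\tau)$ by \eqref{BiDinvequa2.1}) shows the image of $(y_v)$ in $W_\tau(\RA_k)$ is orthogonal to $\Br(W_\tau)$; this point maps to $p_\sigma((x_v))$, giving the required lift.

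For the harder inclusion $\subseteq$, I would start from $(z_v)\in Z(\RA_k)^{\et,\Br}$. Fix an arbitrary finite $k$-group $F$ and an $F$-torseur $W\to Z$; the goal is to produce $\sigma\in H^1(k,G)$ and a point of $X_\sigma(\RA_k)^{G_\sigma-\et,\Br_{G_\sigma}}$ over $(z_v)$, but since $G_\sigma-\et,\Br_{G_\sigma}$ is itself an intersection over all $G_\sigma$-compatible finite torseurs, I will need to handle all such torseurs simultaneously. The natural move: a $G_\sigma$-compatible finite torseur $V\to X_\sigma$ descends (Corollary \ref{BiDcor2.2}) to a finite torseur $V_Z\to Z$; conversely any finite torseur over $Z$ pulls back to a $G$-compatible one over $X$. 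So the $G$-compatible finite torseurs over $X$ (and their twists) are, up to twisting, exactly the pullbacks of finite torseurs over $Z$. This reduces matters to: for $W\to Z$ a finite $F$-torseur, $(z_v)\in Z(\RA_k)^{\et,\Br}$ lifts through some twist $W_\tau$ to a point orthogonal to $\Br(W_\tau)$ (by definition of $Z(\RA_k)^{\et,\Br}$), and then one applies \eqref{BiDprop4.2e} to the $G$-torseur $X\times_Z W_\tau\to W_\tau$: this gives $\sigma$ and a point on the corresponding twist $(X\times_Z W_\tau)_\sigma$ orthogonal to the invariant Brauer group, which maps down correctly. The bookkeeping — matching the twist $\sigma\in H^1(k,G)$ coherently across all the finite torseurs $W$, and checking the point assembled this way genuinely lies in the intersection defining $X_\sigma(\RA_k)^{G_\sigma-\et,\Br_{G_\sigma}}$ — is where I expect the real work; a compactness/finiteness argument in the style of Lemma \ref{BiDlem4.2.2} (only finitely many $\sigma$ are relevant for a given adelic point) together with a diagonal-type selection over a cofinal system of finite torseurs should close this.

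The main obstacle, as just indicated, is the uniformity of the choice of $\sigma\in H^1(k,G)$: Theorem \ref{BiDthm1} lets one replace $\Br_{G_\sigma}$-orthogonality on a single fiber by $\et,\Br$-orthogonality, and \eqref{BiDprop4.2e} handles a single descent, but weaving these together over the cofiltered system of all finite ($G$-compatible) torseurs so that one lands in the honest intersection requires showing the relevant index sets are finite and the partial lifts are compatible. I would organize this exactly as in the proof of \cite[Thm.~1.1]{Sk1} / \cite[Prop.~6.6]{CDX}: use Lemma \ref{BiDlem4.2.2} to see that for the fixed adelic point only finitely many $\sigma$ occur at each finite level, pass to an ultrafilter (or use that $H^1(k,G)$-indexed unions stabilize), and invoke Theorem \ref{BiDthm1} at the end to identify the resulting fiberwise condition with $X_\sigma(\RA_k)^{\et,\Br}$ feeding back into itself — but here it is cleaner to keep $G_\sigma-\et,\Br_{G_\sigma}$ and only use Theorem \ref{BiDthm1} in the deduction of Theorem \ref{BiDthm2} in \S5. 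Everything else — Corollary \ref{BiDcor2.2}, Proposition \ref{BiDprop2.2}(3), and the equalities \eqref{BiDprop4.2e}, \eqref{BiDinvequa2.3} — is plug-in.
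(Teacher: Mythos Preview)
Your outline has the right skeleton: the inclusion $\supseteq$ via pullback (any finite torsor on $Z$ pulls back to a $G_\sigma$-compatible one on $X_\sigma$, and since $W'_\tau\to W_\tau$ is itself a $G_\sigma$-torsor, $\Br(W_\tau)$ pulls back into $\Br_{G_\sigma}(W'_\tau)$), the use of Corollary~\ref{BiDcor2.2} to identify $G_\sigma$-compatible finite torsors on $X_\sigma$ with pullbacks from $Z$, the finiteness of the relevant $\sigma$'s via Lemma~\ref{BiDlem4.2.2}, and the recognition that the crux of $\subseteq$ is producing one $\sigma$ and one adelic lift $x$ that works for \emph{all} finite torsors simultaneously. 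You are also right not to invoke Theorem~\ref{BiDthm1} here, since it is proved only afterwards.

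The genuine gap is the step you call ``diagonal-type selection'' or ``ultrafilter''. Once the finite set $\Delta\subset H^1(k,G)$ is fixed, the fibers $p_\sigma^{-1}(z)\cong G_\sigma(\RA_k)$ are \emph{not} compact, so an infinite filtered intersection of nonempty closed subsets of $\bigsqcup_{\sigma\in\Delta}G_\sigma(\RA_k)$ can perfectly well be empty; no abstract selection principle rescues this. The ingredient you list as mere ``plug-in'', namely \eqref{BiDinvequa2.3}, is in fact the engine of the argument: it makes each candidate set $E_{F,V}$ (lifts of $z$ through the twisted fibered torsor, orthogonal to the invariant Brauer group) stable under left translation by $K_{a,\Delta}:=\prod_{\sigma\in\Delta}G_\sigma(\RA_k)^{\Br_a(G_\sigma)}$. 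The paper then pushes everything through the continuous \emph{open} Brauer--Manin map
\[
a_\Delta\colon\ \bigsqcup_{\sigma\in\Delta}G_\sigma(\RA_k)\ \longrightarrow\ \CB:=\bigsqcup_{\sigma\in\Delta}\Hom(\Br_a(G_\sigma),\BQ/\BZ),
\]
whose target is compact (a finite disjoint union of Pontryagin duals of discrete groups). Because $a_\Delta$ is open and the $E_{F,V}$ are $K_{a,\Delta}$-saturated closed sets, their images $a_\Delta(E_{F,V})\subset\CB$ are closed; the filtered system (one must check that $\CS$ is filtered and that the $E_{F,V}$ are closed and monotone --- this is the content of Lemma~\ref{BiDlem4.2}) then has nonempty intersection in the compact $\CB$, whence $E_\infty:=\bigcap_{(F,V)}E_{F,V}\neq\emptyset$, and any point of $E_\infty$ furnishes the desired $x\in X_\sigma(\RA_k)^{G_\sigma-\et,\Br_{G_\sigma}}$ above $z$. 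Without this passage to a compact quotient via \eqref{BiDinvequa2.3}, your diagonal selection has no space in which to converge.
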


\begin{proof}
L'inclusion $\supset$ d\'ecoule du fait que,
pour tout torseur $V\to Z$ sous un $k$-groupe fini, l'image r\'eciproque $X\times_ZV\to X$ est $G$-compatible.

Pour l'inclusion $\sbt$, on peut supposer que $Z(\RA_k)^{\et, \Br}\neq\emptyset$.

On fixe un point $z\in Z(\RA_k)^{\et, \Br}$.

Soit $\Delta$ l'ensemble des $\sigma\in H^1(k,G)$ tels que
$p_{\sigma}^{-1}(z)\cap X_{\sigma}(\RA_k)^{\Br_{G_{\sigma}}(X_{\sigma})}\neq \emptyset. $
Alors $\Delta\neq\emptyset$ par (\ref{BiDprop4.2e}).
Pour tout $\sigma\in \Delta$, on fixe un point 
$x_{\sigma}\in p_{\sigma}^{-1}(z)\cap X_{\sigma}(\RA_k)^{\Br_{G_{\sigma}}(X_{\sigma})}$.
Ceci induit un isomorphisme: 
$$\Psi_{\sigma}: G(\RA_k)\to p_{\sigma}^{-1}(z)(\RA_k): g\mapsto g\cdot x_{\sigma}.$$
Notons:
$$E_{0,\sigma}:=\Psi_{\sigma}^{-1}(p_{\sigma}^{-1}(z)\cap X_{\sigma}(\RA_k)^{\Br_{G_{\sigma}}(X_{\sigma})})\ \ \ 
\text{et}\ \ \ E_0:=\sqcup_{\sigma\in \Delta}E_{0,\sigma}.$$

Pour tout $\sigma\in \Delta$, soit $G_{\sigma}(\RA_k)\xrightarrow{a_{\sigma}}\Hom(\Br_a(G_{\sigma}),\BQ/\BZ)$ 
l'homomorphisme induit par l'accouplement de Brauer-Manin.
Donc $\Ker(a_{\sigma})=G_{\sigma}(\RA_k)^{\Br_a(G_{\sigma})}$.
Notons 
$$K_{a,\Delta}:=\prod_{\sigma\in \Delta} \Ker(a_{\sigma}) \ \ \ \text{et} \ \ \ 
G_{\Delta}(\RA_k):=\sqcup_{\sigma\in \Delta}G_{\sigma}(\RA_k).$$
D\'efinissons l'action de $\Ker(a_{\sigma})$ sur $G_{\sigma}(\RA_k)$ par la multiplication  \`a gauche.
Ceci induit une unique action de $K_{a,\Delta}$ sur $G_{\Delta}(\RA_k)$
 telle que l'action de $\Ker(\sigma_1)$ sur $G_{\sigma_2}(\RA_k)$ soit l'identit\'e pour tous $\sigma_1\neq \sigma_2$.
D'apr\`es (\ref{BiDinvequa2.3}), $E_0$ est $K_{a,\Delta}$-invariant.

Soit $\CS$ l'ensemble des couples $(F, V\xrightarrow{f} Z)$ avec $F$ un $k$-groupe fini 
et $V\xrightarrow{f} Z $ un $F$-torseur tel que $V$ soit g\'eom\'e\-triquement int\`egre.
On d\'efinit un ordre partiel:
pour tous $(F_1,V_1), (F_2,V_2) \in \CS$, 
on a $(F_1,V_1)\leq (F_2,V_2) $ si et seulement s'il existe un $\sigma\in H^1(k,F_1)$ 
et un homomorphisme surjectif $\phi: F_2\to F_{1,\sigma}$ tels que $\phi_*([V_2])=[V_{1,\sigma}]$.

Pour tout $(\delta,\sigma)\in H^1(k,F)\times H^1(k,G)$, soit $Y_{\sigma,\delta}:=X_{\sigma}\times_ZV_{\delta}$.
On a un diagramme commutatif de $F_{\delta}\times G_{\sigma}$-vari\'et\'es et de $F_{\delta}\times G_{\sigma}$-morphismes:
$$\xymatrixcolsep{7pc}\xymatrix{Y_{\sigma,\delta}\ar@{}[rd]|{\square}\ar[r]^{f^{\sigma}_{\delta}} \ar[d]^{p^{\delta}_{\sigma}} &X_{\sigma}\ar[d]^{p_{\sigma}}\\
V_{\delta}\ar[r]^{f_{\delta}}&Z,
}$$
tel que toute verticale soit un $G_{\sigma}$-torseur et que toute horizontale soit un $F_{\delta}$-torseur.

Pour tout $(F, V\xrightarrow{f} Z)\in \CS $ et tout $\sigma\in \Delta$,
notons 
\begin{equation}\label{BiDlem4.2e}
E_{F,V,\sigma}:=\Psi_{\sigma}^{-1}(p_{\sigma}^{-1}(z)\cap [\cup_{\delta\in H^1(k,F)}  f^{\sigma}_{\delta} (Y_{\sigma,\delta}(\RA_k)^{\Br_{G_{\sigma}}(Y_{\sigma,\delta}) } )] )\sbt G_{\sigma}(\RA_k)
\end{equation}
et $E_{F,V}:=\sqcup_{\sigma\in \Delta}E_{F,V,\sigma}\sbt G_{\Delta}(\RA_k)$.

\begin{lem}\label{BiDlem4.2}
Pour tout $(F, V\xrightarrow{f} Z)\in \CS $, on a:

(1) l'ensemble $E_{F,V}$ est un sous-ensemble non vide ferm\'e $K_{a,\Delta}$-invariant de $E_0$;

(2)  pour tout $(F_1, V_1)\in \CS $ v\'erifiant $(F, V)\leq  (F_1, V_1)$,  on a $E_{F_1,V_1}\sbt E_{F,V}$;

(3) l'ensemble  $\CS$ est un ensemble ordonn\'e filtrant.
\end{lem}

\begin{proof}
Pour tout $\delta\in H^1(k,F)$ et tout $\sigma\in H^1(k,G)$, le morphisme $f^{\sigma}_{\delta}$ est fini.
D'apr\`es (\ref{BiDinvequa2.3}), 
$Y_{\sigma,\delta}(\RA_k)^{\Br_{G_{\sigma}}(Y_{\sigma,\delta})}$ est $\Ker(a_{\sigma})$-invariant 
et
$$f^{\sigma}_{\delta}(Y_{\sigma,\delta}(\RA_k)^{\Br_{G_{\sigma}}(Y_{\sigma,\delta})})
\sbt X_{\sigma}(\RA_k)^{\Br_{G_{\sigma}}(X_{\sigma})}$$
 est ferm\'e (\cite[Prop. 4.4]{Co}) et $\Ker(a_{\sigma})$-invariant.
 Ainsi
 $$\Psi_{\sigma}^{-1}[p_{\sigma}^{-1}(z)\cap 
  f^{\sigma}_{\delta}(Y_{\sigma,\delta}(\RA_k)^{\Br_{G_{\sigma}} (Y_{\sigma,\delta})})]
  \sbt E_{0,\sigma}$$
 est ferm\'e et $\Ker(a_{\sigma})$-invariant.

 Appliquant (\ref{BiDprop4.2e})  et le lemme \ref{BiDlem4.2.2} \`a 
$Y_{\sigma, \delta}\xrightarrow{p^{\delta}_{\sigma}} V_{\delta}$,
 il existe au moins un et au plus un nombre fini de $(\delta,\sigma)\in H^1(k,F)\times H^1(k,G)$ tels que 
 $$(f_{\delta}\circ  p_{\sigma}^{\delta})^{-1}(z)\cap Y_{\sigma, \delta}(\RA_k)^{\Br_{G_{\sigma}}(Y_{\sigma,\delta})}\neq\emptyset.  $$
Alors $p_{\sigma}^{-1}(z)\cap X_{\sigma}(\RA_k)^{\Br_{G_{\sigma}}(X_{\sigma})}\neq \emptyset $ 
et donc un tel $\sigma$ est dans $\Delta$.
Alors $E_{F,V}\neq\emptyset$ et (1) d\'ecoule du premier paragraphe.

L'\'enonc\'e (2) d\'ecoule de la  fonctorialit\'e de l'accouplement de Brauer-Manin.

Pour tous $(F_1,V_1), (F_2,V_2) \in \CS$, on a un $F_1\times F_2$-torseur $V_1\times_ZV_2\to Z$.
Par hypoth\`ese,  il existe un $(\sigma_1,\sigma_2)\in H^1(k,F_1)\times H^1(k,F_2)$ 
tel que $(V_{1,\sigma_1}\times_ZV_{2,\sigma_2})(\RA_k)^{\Br(V_{1,\sigma_1}\times_ZV_{2,\sigma_2})}\neq\emptyset$.
D'apr\`es le lemme \ref{BiDlem4.2.1} et (\ref{BiDlem4.2.1-e}), il existe un $k$-sous-groupe ferm\'e $F_3\sbt F_{1,\sigma_1}\times F_{2,\sigma_2}$ 
et une composante connexe $V_3\sbt V_{1,\sigma_1}\times_ZV_{2,\sigma_2}$
tels que $V_3$ soit g\'eom\'e\-triquement int\`egre et que
$V_3\to Z$ soit un $F_3$-torseur compatible avec l'action de $F_{1,\sigma_1}\times F_{2,\sigma_2}$ sur $V_{1,\sigma_1}\times_ZV_{2,\sigma_2}$.
Alors le morphisme $h_1: V_3\sbt V_{1,\sigma_1}\times_ZV_{2,\sigma_2}\to V_{1,\sigma_1}$ 
est compatible avec $\phi_1: F_3\sbt F_{1,\sigma_1}\times F_{2,\sigma_2}\to F_{1,\sigma_1}$.
Puisque $V_{1,\sigma_1}$ est g\'eom\'e\-triquement int\`egre, le morphisme $h_1$ est surjectif et donc $\phi_1$ est surjectif.
Alors $[V_{1,\sigma_1}]=\phi_{1,*}([V_3])$ et $(F_1,V_1)\leq (F_3,V_3)$.
Par ailleurs, $(F_2,V_2)\leq (F_3,V_3)$, d'o\`u l'\'enonc\'e (3).
\end{proof}

Soient $\CB:=\sqcup_{\sigma\in \Delta}\Hom(\Br_a(G_{\sigma}),\BQ/\BZ) $ et
$$a_{\Delta}: G_{\Delta}=\sqcup_{\sigma\in \Delta}G_{\sigma}(\RA_k)\xrightarrow{\sqcup_{\sigma\in \Delta}a_{\sigma}} \sqcup_{\sigma\in \Delta}\Hom(\Br_a(G_{\sigma}),\BQ/\BZ)=\CB.$$
En tant qu'ensembles, 
on a $\Im(a_{\Delta})\cong K_{a,\Delta}\backslash G_{\Delta}$.
 L'espace $\Hom(\Br_a(G_{\sigma}),\BQ/\BZ)$ est compact, car $\Br_a(G_{\sigma})$ est discret.
D'apr\`es  le lemme \ref{BiDlem4.2.2}, $\Delta$ est fini et donc $\CB$ est compact.
Puisque $a_{\sigma}$ est continu et ouvert (\cite[Lem. 4.1]{C1}), l'application $a_{\Delta}$ est ouverte.
Donc l'image d'un sous-ensemble ferm\'e $K_{a,\Delta}$-invariant est ferm\'ee.
Alors $a_{\Delta}(E_{F,V})\sbt \CB$ est ferm\'e non vide pour tout $(F, V)\in \CS$.
Puisque $\CB $ est compact et que $\CS$ est un ensemble ordonn\'e filtrant, d'apr\`es le lemme \ref{BiDlem4.2} (2),
l'intersection 
$$\bigcap_{(F,V)\in \CS}a_{\Delta}(E_{F,V})\neq\emptyset\ \ \   \text{et donc}\ \ \  E_{\infty}:=\bigcap_{(F,V)\in \CS}E_{F,V}\neq\emptyset.$$
Il existe un $\sigma\in \Delta$ tel que $E_{\infty}\cap E_{\sigma}\neq\emptyset$.

Soient $g\in E_{\infty}\cap E_{\sigma}$ et $x:=\Psi_{\sigma}(g)=g\cdot x_{\sigma}$.
Alors $p_{\sigma}(x)=z$ et, d'apr\`es (\ref{BiDlem4.2e}), on a
$$x\in \bigcap_{(F,V)\in \CS}[\bigcup_{\delta\in H^1(k,F)}  f^{\sigma}_{\delta} (Y_{\sigma,\delta}(\RA_k)^{\Br_{G_{\sigma}}(Y_{\sigma,\delta}) } )].$$
D'apr\`es le corollaire \ref{BiDcor2.2},
 tout torseur $G$-compatible sous un $k$-groupe fini sur $X$ provient d'un torseur sur $Z$.
 D'apr\`es le lemme de Stoll (Lem. \ref{BiDlem4.2.3}), il suffit de consid\'erer les torseurs g\'eom\'etriquement int\`egres.
 Donc $x\in X_{\sigma}(\RA_k)^{G_\sigma-\et,\Br_{G_{\sigma}}}$, d'o\`u le r\'esultat.
\end{proof}

La proposition suivante est une g\'en\'eralisation de \cite[Rem. 7.5]{CDX}.

\begin{prop}\label{BiDprop4.3}
Soit $X$ une vari\'et\'e lisse g\'eom\'e\-triquement int\`egre.  Soit 
$$1\to N \to L \xrightarrow{\psi} F\to 1$$
une suite exacte de groupes lin\'eaires avec $F$ fini.
Soient $ V\to X$ un $L$-torseur et $Y:=V/N \to X$ le $F$-torseur induit par $\psi$, i.e. $[Y]=\psi_*([V])$.
Faisons  l'une ou l'autre des hypoth\`eses  :

(1) le groupe $N$ est connexe;

 (2) le groupe $L$ est fini et $N$ est contenu dans le centre de $L$.

Alors, pour tout $\sigma\in H^1(k,F)$ avec $Y_{\sigma}(\RA_k)^{\Br_1(Y_{\sigma})}\neq\emptyset$,
il existe un $\alpha\in H^1(k,L)$ tel que $\psi_*(\alpha)=\sigma$.
\end{prop}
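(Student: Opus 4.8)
The plan is to reduce the statement to the cited \cite[Prop. 7.4]{CDX}, which handles the lifting of adelic classes along surjections of linear groups under Brauer-Manin obstructions; the only work is to package the hypotheses correctly and to trace through the torsion set-theoretic compatibility. First I would fix $\sigma\in H^1(k,F)$ with $Y_\sigma(\RA_k)^{\Br_1(Y_\sigma)}\neq\emptyset$. Twisting the entire sequence $1\to N\to L\xrightarrow{\psi} F\to 1$ by a lift of $\sigma$ reduces us to the case $\sigma = 0$: concretely, choose any $1$-cocycle representing $\sigma$, use it to twist $L$ (the inner action of $L$ on $N$ and the outer action on $F$ are compatible since $N$ is normal), obtaining $1\to N_\sigma\to L_\sigma\xrightarrow{\psi_\sigma} F_\sigma\to 1$, with $N_\sigma$ still connected in case (1) and still central in case (2). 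Under this twist $[V]$ becomes a class $[V']\in H^1(X,L_\sigma)$ with $\psi_{\sigma,*}([V'])$ the class of the trivial-at-$\sigma$ torseur, i.e. $Y_\sigma\to X$ regarded with its natural structure. So it suffices to show: if $Y_\sigma(\RA_k)^{\Br_1(Y_\sigma)}\neq\emptyset$ then there is $\alpha\in H^1(k,L_\sigma)$ with $\psi_{\sigma,*}(\alpha)=0$, which is automatic ($\alpha=0$) — hence the real content is that $V'$ itself, as an $L_\sigma$-torseur over $X$, can be adjusted by a class in $H^1(k,L_\sigma)$ so that it is "the right one", and this is exactly what \cite[Prop. 7.4]{CDX} provides.

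More precisely, I would invoke \cite[Prop. 7.4]{CDX} directly: that result asserts, under hypothesis (1) ($N$ connected) or (2) ($L$ finite, $N$ central), that the map $H^1(k,L)\to H^1(k,F)$ hits every $\sigma$ for which the twisted $F$-torseur $Y_\sigma$ has a point orthogonal to $\Br_1(Y_\sigma)$ in the adeles. The point to verify is that the setup of loc. cit. matches ours: there one is given an $L$-torseur $V\to X$ over a smooth geometrically integral $X$ and the induced $F$-torseor $Y=V/N$, and the conclusion is the existence of $\alpha\in H^1(k,L)$ with $\psi_*(\alpha)=\sigma$ whenever $Y_\sigma(\RA_k)^{\Br_1(Y_\sigma)}\neq\emptyset$. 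This is literally our statement, so modulo checking that our two hypothesis branches are precisely the two branches of \cite[Prop. 7.4]{CDX} (they are: "connected kernel" versus "finite group with central kernel"), the proof is a citation.

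The one genuine step, if \cite[Prop. 7.4]{CDX} is stated only for the connected-kernel case, would be to deduce case (2) from case (1) by a standard dévissage. In case (2), $N$ is finite central in $L$; embed $N$ into a quasi-trivial torus $T_0$ via an exact sequence $0\to N\to T_0\to T\to 0$ as in the proof of Proposition \ref{BiDprop4.1} (using \cite[Prop. 1.3]{CTS1}), set $L_0:=L\times^N T_0$, so that $1\to T_0\to L_0\to F\to 1$ is exact with $T_0$ connected, and push $V$ forward to an $L_0$-torseur $V_0:=V\times^N T_0\to X$ with $V_0/T_0 = Y$. Applying case (1) to $V_0\to X$ yields $\alpha_0\in H^1(k,L_0)$ with image $\sigma$ in $H^1(k,F)$; then one must descend $\alpha_0$ along $H^1(k,L)\to H^1(k,L_0)$, and the obstruction to doing so lies in the relevant piece of $H^2(k,-)$ coming from $H^1(k,T_0/N_{\text{image}})$ type terms — but since $T_0$ is quasi-trivial and $N$ central, the long exact sequence in (non-abelian, but here the relevant part is abelian) cohomology together with $H^1(k,\text{quasi-trivial torus})$ being controlled forces the obstruction to vanish after possibly modifying $\alpha_0$ by an element of $H^1(k,T_0)$, which does not change its image in $H^1(k,F)$.

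The main obstacle I anticipate is precisely this last descent in case (2): making sure that the class $\alpha_0$ in $H^1(k,L_0)$ can be pulled back to $H^1(k,L)$ without altering its image $\sigma$ in $H^1(k,F)$. This is where one must use carefully that $N$ is central (so that $L\to L_0\to F$ are all homomorphisms with abelian kernels $N$, $T_0$, $T_0/N$ respectively) and that the fiber of $H^1(k,L_0)\to H^1(k,F)$ over $\sigma$ is a torsor under (a twisted form of) $H^1(k,T_0)$, while the fiber of $H^1(k,L)\to H^1(k,F)$ over $\sigma$ is a torsor under $H^1(k,N_\sigma)$; the map between these fibers is induced by $N\hookrightarrow T_0$, and surjectivity of $H^1(k,N_\sigma)\to$ (relevant subset of $H^1(k,T_{0,\sigma})$) need not hold on the nose, so one genuinely needs the adelic hypothesis on $Y_\sigma$ to enter — which is exactly the content already built into \cite[Prop. 7.4]{CDX}. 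For this reason the cleanest write-up cites loc. cit. for both cases and does not attempt the dévissage by hand.
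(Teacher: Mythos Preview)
Your first paragraph is circular: to twist the sequence $1\to N\to L\to F\to 1$ by $\sigma$ you would need to lift $\sigma$ to a $1$-cocycle with values in $L$, and the existence of such a lift is precisely the conclusion you are trying to establish. You seem to sense this, but the paragraph never recovers into a usable reduction.

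Your d\'evissage in the third paragraph has the right architecture but the wrong embedding, and this is what creates the obstacle you flag in the fourth paragraph. You embed $N$ into a quasi-trivial torus $T_0$ (the sequence $0\to N\to T_0\to T\to 0$ from Proposition~\ref{BiDprop4.1}) and form $L_0=L\times^N T_0$; then $L_0/L\cong T_0/N=T$ is a torus that is \emph{not} quasi-trivial, so $H^1(k,T)$ need not vanish and the surjectivity of $H^1(k,L)\to H^1(k,L_0)$ genuinely fails in general. The paper uses the other resolution that \cite[Prop.~1.3]{CTS1} provides: an exact sequence $0\to N\to T\to T_0\to 0$ with the \emph{quotient} $T_0$ quasi-trivial. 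Setting $L'=L\times^N T$ one obtains both $1\to T\to L'\to F\to 1$ (so the kernel is the connected torus $T$ and case~(1) applied to the $L'$-torsor $V\times^N T$ yields $\beta\in H^1(k,L')$ with $\psi_{1,*}(\beta)=\sigma$) and $1\to L\to L'\to T_0\to 1$. Since $H^1(k,T_0)=0$, exactness of the pointed-set sequence $H^1(k,L)\to H^1(k,L')\to H^1(k,T_0)$ forces $H^1(k,L)\to H^1(k,L')$ to be surjective, and $\beta$ lifts to the desired $\alpha\in H^1(k,L)$. Swap which side of the resolution is quasi-trivial and the obstruction you worried about simply disappears; no further use of the adelic hypothesis on $Y_\sigma$ is needed at this lifting step.

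As for simply citing \cite[Prop.~7.4]{CDX} for both cases: the paper invokes \cite[Rem.~7.5]{CDX} only for case~(1) and carries out the d\'evissage above for case~(2), so you should not count on the citation alone to cover the finite-with-central-kernel situation.
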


\begin{proof}
Le cas o\`u $N$ est connexe est exactement \cite[Rem. 7.5]{CDX}.

On consid\`ere le cas (2). Dans ce cas, $N$ est un $k$-groupe fini commutatif.
La r\'esolution flasque (\cite[Prop. 1.3]{CTS1}) donne une suite exacte 
$0\to N\to T\to T_0\to 0$ avec $T$ un tore et $T_0$ un tore quasi-trivial.
Soit $L':=L\times^NT $. 
Alors $L'$ est un groupe lin\'eaire, car  $N$ est contenu dans le centre de $L$.
Ceci induit un diagramme commutatif de suites exactes et de colonnes exactes:

$$\xymatrix{&1\ar[d]&1\ar[d]&&\\
1\ar[r]&N\ar[r]\ar[d]&L\ar[r]\ar[d]^{\psi_2}&F\ar[d]^=\ar[r]&1\\
1\ar[r]&T\ar[r]\ar[d]&L'\ar[r]^{\psi_1}\ar[d]&F\ar[r]&1\\
&T_0\ar[r]^=\ar[d]&T_0\ar[d]&&\\
&0&0&&.
}$$
Appliquons le cas (1) au $L'$-torseur $\psi_{2,*}([V])$. On obtient un $\beta\in H^1(k,L')$ tel que $\psi_{1,*}(\beta)=\sigma$.
Puisque $H^1(k,T_0)=0$, il existe un $\alpha\in H^1(k,L)$ tel que $\psi_{2,*}(\alpha)=\beta$ et donc $\psi_*(\alpha)=\sigma$.
\end{proof}

La proposition \ref{BiDprop4.3} (2) et la formule (\ref{BiDinvequa2.1}) impliquent directement:

\begin{cor}\label{BiDprop4.3cor}
Sous les hypoth\`eses de la proposition \ref{BiDprop4.3} (2), soit $G$ un groupe lin\'eaire.
Pour tout $\sigma\in H^1(k,F)$, s'il existe une action de $G$ sur $Y_{\sigma}$ telle que $Y_{\sigma}(\RA_k)^{\Br_G(Y_{\sigma})}\neq\emptyset$, 
alors il existe un $\alpha\in H^1(k,L)$ tel que $\psi_*(\alpha)=\sigma$.
\end{cor}

\section{D\'emonstration du th\'eor\`eme \ref{BiDthm1}}\label{BiD6}

Dans toute cette section,  $k$ est un corps de nombres. 
Sauf  mention explicite du contraire, une vari\'et\'e est  une $k$-vari\'et\'e.

Dans toute cette section,  $G$  est un $k$-groupe lin\'eaire connexe et $(X,\rho)$ une $G$-vari\'et\'e lisse g\'eom\'e\-triquement int\`egre.

Pour tout $k$-groupe fini $F$ et tout $F$-torseur $f:Y\to X$, 
soit $(H_Y\xrightarrow{\psi_Y}G)$ le groupe minimal compatible au $F$-torseur $Y$ (cf. D\'efinition \ref{BiDdef2.1}).
Pour tout $\sigma\in H^1(k,F)$, le $F_{\sigma}$-torseur $f_{\sigma}: Y_{\sigma}\to X$ est $H_Y$-compatible, 
i.e. il existe une unique action de $H_Y$ sur $Y_{\sigma}$ telle que $f_{\sigma}$ soit un $H_Y$-morphisme.

Dans \S 1, on a d\'efini $X(\RA_k)^{\et, \Br}$ (cf. (\ref{BiDthm2e})) et $X(\RA_k)^{G-\et, \Br_G}$ (cf. (\ref{BiDdef1e1})).
On d\'efinit
$$X(\RA_k)^{\et, \Br_G}:=\bigcap_{\stackrel{f: Y\xrightarrow{F}X,}{ F\ \text{fini}}} \bigcup_{\sigma\in H^1(k,F)}f_{\sigma}(Y_{\sigma}(\RA_k)^{\Br_{H_Y}(Y_{\sigma})} ) $$
et
$$X(\RA_k)^{c.c.,\et, \Br_G}:=\bigcap_{\stackrel{f: Y\xrightarrow{F}X}{F\ \text{fini commutatif},\ Y\ \text{g\'eo. connexe} }} \bigcup_{\sigma\in H^1(k,F)}f_{\sigma}(Y_{\sigma}(\RA_k)^{\Br_{H_Y}(Y_{\sigma})})  $$
o\`u geo. connexe signifie g\'eom\'e\-triquement connexe et c.c. est une abr\'eviation de commutatif connexe. 
On a directement:
$$ X(\RA_k)^{\et,\Br_G} \sbt X(\RA_k)^{c.c.,\et, \Br_G}\ \ \ \text{et}\ \ \ 
X(\RA_k)^{\et,\Br}\sbt  X(\RA_k)^{\et,\Br_G} \sbt X(\RA_k)^{G-\et,\Br_G}.$$

\begin{prop}\label{BiDprop5.1}
On a $X(\RA_k)^{c.c.,\et, \Br_G}\sbt X(\RA_k)^{\Br(X)}$.
\end{prop}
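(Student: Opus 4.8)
The plan is to deduce everything from the universal torseurs of $n$-torsion of Harari--Skorobogatov together with Proposition~\ref{BiDcor2.3.1}. If $X(\RA_k)^{c.c.,\et,\Br_G}=\emptyset$ there is nothing to prove, so I fix $x=(x_v)\in X(\RA_k)^{c.c.,\et,\Br_G}$. Since $X$ is smooth and integral, $\Br(X)$ injects into $\Br(k(X))$ and is therefore a torsion group; it thus suffices, for a fixed $\alpha\in\Br(X)$ and a fixed integer $n\geq 2$ with $n\alpha=0$, to show that $\sum_{v}\inv_v(\alpha(x_v))=0$.

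First I would note that $x\in X(\RA_k)^{\Br_1(X)}$. Indeed, the trivial torseur $X\to X$ under the trivial $k$-group $F=\{e\}$ lies in the family defining $X(\RA_k)^{c.c.,\et,\Br_G}$ (its total space $X$ is geometrically integral, hence geometrically connected), and by Proposition~\ref{BiDprop2.3} its minimal compatible group is $G$ itself, since $\Ker(\id_G)$ acts freely on $X$; as $H^1(k,\{e\})$ is a point, the corresponding term of the intersection is exactly $X(\RA_k)^{\Br_G(X)}$, and $\Br_1(X)\sbt\Br_G(X)$ by (\ref{BiDinvequa2.1}). In particular $X(\RA_k)^{\Br_1(X)}\neq\emptyset$, so by \cite[Cor.~3.6]{HS13} there is a universal torseur of $n$-torsion $\CT_X\xrightarrow{f}X$ under a finite commutative $k$-group $S_X$, with $\CT_X$ geometrically integral. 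Let $H\xrightarrow{\psi}G$ be the minimal group compatible with the $S_X$-torseur $\CT_X$.

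The heart of the argument is then to use $\CT_X$ itself as a test object in the definition of $X(\RA_k)^{c.c.,\et,\Br_G}$. Since $S_X$ is finite commutative and $\CT_X$ is geometrically connected, $f\colon\CT_X\to X$ belongs to the defining family, so there exist $\sigma\in H^1(k,S_X)$ and $y=(y_v)\in\CT_{X,\sigma}(\RA_k)^{\Br_H(\CT_{X,\sigma})}$ with $f_\sigma(y)=x$. By Proposition~\ref{BiDcor2.3.1}, $f_\sigma^*(\alpha)\in\Br_H(\CT_{X,\sigma})$; hence, using the $\Br_H(\CT_{X,\sigma})$-orthogonality of $y$ together with $f_\sigma(y_v)=x_v$, we obtain $\sum_v\inv_v(\alpha(x_v))=\sum_v\inv_v\bigl((f_\sigma^*\alpha)(y_v)\bigr)=0$. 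As $\alpha$ was arbitrary, $x\in X(\RA_k)^{\Br(X)}$.

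The only step requiring care is the production of the universal torseur of $n$-torsion — that is, verifying that the hypothesis on $x$ already forces $X(\RA_k)^{\Br_1(X)}\neq\emptyset$, which is exactly why I single out the trivial torseur at the outset. Once $\CT_X$ and its minimal compatible group $H$ are available, Proposition~\ref{BiDcor2.3.1} does all the real work, so I do not expect any genuine obstacle.
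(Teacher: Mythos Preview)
Your proof is correct and follows essentially the same route as the paper: reduce to an $n$-torsion class, use $x\in X(\RA_k)^{\Br_1(X)}$ to invoke \cite[Cor.~3.6]{HS13} and produce a universal $n$-torsion torsor $\CT_X$, then apply Proposition~\ref{BiDcor2.3.1} to the minimal compatible group $H$ and conclude via the defining intersection. Your treatment is slightly more explicit than the paper's in justifying $x\in X(\RA_k)^{\Br_1(X)}$ via the trivial torsor, whereas the paper simply cites (\ref{BiDinvequa2.1}); otherwise the arguments coincide.
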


\begin{proof}
Il suffit de montrer que, pour tout $\alpha\in \Br(X)$ et tout $x\in X(\RA_k)^{c.c.,\et, \Br_G}$,  on a $\alpha (x)=0$. 
On fixe un tel $x$ et un tel $\alpha$.

Il existe un entier $n$ tel que $n\cdot \alpha=0$.
D'apr\`es le corollaire \ref{BiD4cor1}, il existe un torseur universel de $n$-torsion $\CT_X\xrightarrow{f}X$ (un $S_X$-torseur).
Soit $H$ le groupe minimal compatible au $S_X$-torseur $\CT_X$.
Par hypoth\`ese, il existe un $\sigma\in H^1(k,S_X)$
 et un point ad\'elique $t\in \CT_{X,\sigma}(\RA_k)^{\Br_H(\CT_{X,\sigma})} $ tels que $f_{\sigma}(t)=x$.
 D'apr\`es la proposition \ref{BiDcor2.3.1}, $f_{\sigma}^*(\alpha)\in \Br_H(\CT_{X,\sigma})$.
Alors $\alpha(x)= f_{\sigma}^*(\alpha)(t)=0$.
\end{proof}

Le lemme suivant g\'en\'eralise un r\'esultat de Skorobogatov (\cite[Thm. 1.1]{Sk1}) et il g\'en\'eralise aussi \cite[Prop. 6.6]{CDX}.
Sa d\'emonstration suit l'id\'ee de \cite[p. 506]{Sk1} et de \cite[Prop. 5.17]{St}.

\begin{lem}\label{BiDlem5.2}
Soient $F$ un  $k$-groupe fini, $f: Y\to X$ un $F$ torseur
 et $(H_Y\xrightarrow{\psi_Y}G)$ le groupe minimal compatible au $F$-torseur $Y$.
  Supposons que $Y$ est g\'eom\'e\-triquement int\`egre.
 Alors 
 
 (1) on a  $X(\RA_k)^{\et, \Br_G}=\cup_{\sigma\in H^1(k,F)} f_{\sigma}[ Y_{\sigma}(\RA_k)^{\et, \Br_{H_Y}}];$
 
(2)  on a $X(\RA_k)^{\et, \Br}=\cup_{\sigma\in H^1(k,F)} f_{\sigma}[ Y_{\sigma}(\RA_k)^{\et, \Br}];$
 
 (3) si $\psi_Y: H_Y\iso G$ est un isomorphisme, on a
 $$X(\RA_k)^{G-\et, \Br_G}=\cup_{\sigma\in H^1(k,F)} f_{\sigma}[ Y_{\sigma}(\RA_k)^{G-\et, \Br_G}].$$
\end{lem}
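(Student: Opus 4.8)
The statement is a "descent compatibility" result for a torseur $f\colon Y\to X$ under a finite group $F$, and it comes in three parallel flavours ($\et,\Br_G$; $\et,\Br$; $G$-$\et,\Br_G$). In each case the inclusion $\supseteq$ is the easy direction: given a torseur $W\to Y_\sigma$ under a finite group (possibly $G$-compatible, resp. $H_Y$-compatible), one pushes it forward along $f_\sigma$ using a Weil restriction / corestriction construction — as in \cite[Page 506]{Sk1} and \cite[Prop. 5.17]{St} — to produce a torseur over $X$ under some finite group, and Corollaire \ref{BiDcor2.2.1} guarantees that $G$-compatibility (resp. $H_Y$-compatibility) is preserved by this Weil restriction. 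The functoriality of the Brauer pairing then shows any adelic point in the right-hand side survives the test imposed by the composed torseur over $X$, hence lies in the left-hand side.

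**The hard direction.** For $\subseteq$, fix $x\in X(\RA_k)^{\et,\Br_G}$ (resp. the other two variants). By definition, applied to the torseur $f\colon Y\to X$ itself, there is $\sigma\in H^1(k,F)$ and $y\in Y_\sigma(\RA_k)^{\Br_{H_Y}(Y_\sigma)}$ with $f_\sigma(y)=x$; moreover by Lemme \ref{BiDlem4.2.2} only finitely many such $\sigma$ occur, so it suffices to treat each $\sigma$, and after renaming we may assume $\sigma=0$, i.e. we have $y\in Y(\RA_k)^{\Br_{H_Y}(Y)}$ lifting $x$. The claim to prove is then that $y\in Y(\RA_k)^{\et,\Br_{H_Y}}$ (resp. $Y(\RA_k)^{\et,\Br}$, resp. $Y(\RA_k)^{H_Y-\et,\Br_{H_Y}}$ once $\psi_Y$ is an isomorphism so $H_Y\cong G$). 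So we must test $y$ against an arbitrary finite torseur $W\to Y$ (in variant (3), a $G$-compatible one; in variant (1)-(2), an arbitrary one, reduced to geometrically integral ones by \cite[Lem. 7.1]{CDX}). The standard trick is to form the composite: $W\to Y$ pushed forward (Weil restriction along $f$) to a finite torseur $V\to X$, then pulled back to $Y$; the fibre product $W\times_Y(Y\times_X V)$ contains a copy of the data that lets one compare the $\et$-obstruction on $Y$ with that on $X$. Because $x\in X(\RA_k)^{\et,\Br_G}$, there is a twist of $V$ and an adelic point over $x$ orthogonal to $\Br_{H_V}$ of that twist; one must match this up with a twist of $W$ and check the orthogonality transfers down to $\Br_{H_Y}(W_{\overline\sigma})$.

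**The key technical inputs.** The compatibility "$\Br_{H}$ of a finite torseur pushes/pulls correctly" is exactly what Proposition \ref{BiDprop4.3} and the functoriality statement \cite[Prop. 4.4]{Co} are for: Proposition \ref{BiDprop4.3}(2) (the case $L$ finite, $N$ central) ensures that if a twist $W_{\overline\sigma}$ of the torseur over $Y$ has $W_{\overline\sigma}(\RA_k)^{\Br_1}\neq\emptyset$ then $\overline\sigma$ lifts through the relevant extension of finite groups, which is what lets us descend the twist from $X$ to $Y$ compatibly. Meanwhile the minimal-group formalism of Proposition \ref{BiDprop2.3} and Corollaire \ref{BiDcor2.3} guarantees that $H_Y$ acts through a \emph{faithful} central extension of $G$ on $Y$, and that composing with a further finite torseur over $Y$ only enlarges the minimal group by a central finite piece — so $H_V$ (the minimal group for the composite $V\to X$) surjects onto $H_Y$ with finite central kernel, and Proposition \ref{BiDcor2.3.1}'s style of argument shows $\Br_{H_Y}(Y)\subseteq$ image of $\Br_{H_V}$ under pullback. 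Combining: orthogonality to $\Br_{H_V}(V_{\overline\sigma})$ upstairs at the lifted point forces orthogonality to $\Br_{H_Y}(W_{\overline\sigma})$ at $y$.

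**Where the real obstacle lies.** The bookkeeping obstacle is the interplay of the \emph{two} twisting parameters — $\sigma\in H^1(k,F)$ for $Y$ and $\overline\sigma\in H^1(k,F')$ for the new torseur $W\to Y$ — together with the torseur $p$-type data from the $G$-action, exactly the kind of double-twist diagram that already appeared in Proposition \ref{BiDprop4.2} (the square with $Y_{\sigma,\overline\sigma}$). One has to show that for each finite torseur over $Y$ the set of "good" pairs is nonempty, closed, and $K_{a,\Delta}$-invariant (in the $\Br_a(G)$-orbit sense of \eqref{BiDinvequa2.3}), then run the same compactness/filtered-intersection argument as in the proof of Proposition \ref{BiDprop4.2} to extract a single adelic point of $Y$ orthogonal to all of them simultaneously. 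So in practice the proof of this lemma will be a reprise of the proof of Proposition \ref{BiDprop4.2} with "$Z$" replaced by "$X$", "$X$" replaced by "$Y$", and the structure group $G$ replaced by $H_Y$; the content is in checking that every cited ingredient ((\ref{BiDprop4.2e}), Corollaire \ref{BiDcor2.2}, Proposition \ref{BiDprop4.3}, \cite[Lem. 7.1]{CDX}) applies verbatim with $H_Y$ in place of $G$, which it does because $H_Y$ is a connected linear group acting on the geometrically integral variety $Y$ and $f$ is simultaneously an $H_Y$-morphism and an $F$-torseur. Variants (2) and (3) then follow from (1) by restricting to, respectively, \emph{all} finite torseurs with the trivial $G$-action data (so $H_Y=G$, recovering $\et,\Br$), and \emph{$G$-compatible} finite torseurs in the case $\psi_Y$ is an isomorphism (so $H_Y$ literally equals $G$ and $G$-compatibility of a torseur over $Y$ descends to a $G$-compatibility over $X$ via Corollaire \ref{BiDcor2.2}).
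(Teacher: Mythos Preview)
Your proposal has a genuine structural gap: you model the hard direction on Proposition~\ref{BiDprop4.2}, but that proposition concerns a torsor under a \emph{connected} group and relies on the continuous compactness of $\Hom(\Br_a(G_\sigma),\BQ/\BZ)$ together with $K_{a,\Delta}$-invariance. Here $f\colon Y\to X$ is a torsor under a \emph{finite} group $F$, so none of that machinery applies (there is no $\Br_a(F)$ in the relevant sense, and the orbits under $F(\RA_k)$ are not the right objects). The paper instead runs the discrete Stoll--Skorobogatov argument: the fibre set $\Delta=\bigsqcup_\sigma f_\sigma^{-1}(x)\cap Y_\sigma(\RA_k)$ is literally finite, one defines for each $\sigma$ a filtered system $\CS_\sigma$ of ``$X$-torsors over $Y_\sigma$'' and the corresponding nested finite subsets $\Delta_V\subset\Delta$, and a short combinatorial minimality argument (steps (i)--(iii)) locates a $\sigma$ with $\Delta^\sigma\cap\Delta_\sigma\neq\emptyset$. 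Your line ``fix $y$, then prove this $y$ lies in $Y(\RA_k)^{\et,\Br_{H_Y}}$'' is exactly what fails without this: different torsors over $Y$ may be witnessed by different lifts of $x$, and the whole point is to extract one common lift.

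Two smaller corrections. First, you have the Weil restriction in the wrong direction: the inclusion $\supseteq$ is just pull-back of torsors from $X$ to $Y_\sigma$ plus functoriality of the Brauer pairing; the Weil restriction $V=R_{Y_\sigma/X}(Z)\times_X Y_\sigma$ enters only in step (iv) of the hard direction, to dominate an arbitrary finite torsor $Z\to Y_\sigma$ by one pulled back from $X$. Second, neither Proposition~\ref{BiDprop4.3} nor Corollaire~\ref{BiDcor2.3} plays any role here (they belong to the proof of Lemme~\ref{BiDlem5.1}), and variants (2) and (3) are not specializations of (1): the paper reruns the same finite argument verbatim, using Corollaire~\ref{BiDcor2.2.1} in case (3) to guarantee that the Weil-restricted torsor $V\to X$ remains $G$-compatible.
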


\begin{proof}
L'inclusion $ \supset $ dans les trois cas est d\'efinie par le pullback des torseurs et la fonctorialit\'e de l'accouplement de Brauer-Manin.
On consid\`ere l'inclusion $\sbt$.

\medskip

Dans le cas (1), il suffit de montrer que, pour tout $x\in X(\RA_k)^{\et, \Br_G}$, 
il existe un $\sigma\in H^1(k,F)$ et un $y\in Y_{\sigma}(\RA_k)^{\et, \Br_{H_Y}}$ tels que $f_{\sigma}(y)=x$.
On fixe un tel $x$.

Pour tout $\sigma\in H^1(k,F)$, soient 
$$\Delta_{\sigma}:=f_{\sigma}^{-1}(x)\cap Y_{\sigma}(\RA_k),\ \ \ \Sigma:=\{\sigma\in H^1(k,F):\ \Delta_{\sigma}\neq\emptyset\}\ \ \ \text{et}\ \ \ \Delta:=\sqcup_{\sigma\in \Sigma}\Delta_{\sigma}.$$
D'apr\`es le lemme \ref{BiDlem4.2.2}, $\Delta$ et $ \Sigma$ sont finis.

 Soit $\CS$ l'ensemble des \emph{$X$-torseurs sur $Y$ sous $k$-groupes finis}
i.e. l'ensemble des quintuples $(\sigma, E,E\xrightarrow{\psi}F_{\sigma},V\xrightarrow{h_V} X, V\xrightarrow{h} Y_{\sigma})$ 
avec $\sigma\in H^1(k,F)$, $E$ un $k$-groupe fini, $\psi$ un homomorphisme surjectif, $V\xrightarrow{h_V} X$ un $E$-torseur et $h$ un $E$-morphisme sur $X$.
Alors $\psi_*([V])=[Y_{\sigma}]\in H^1(k,F)$ et $h: V\to Y_{\sigma}$ est un $\Ker(\psi)$-torseur. 
Donc $h_{\alpha}: V_{\alpha}\to Y_{\sigma+\psi_*(\alpha)}$ est un $\Ker(\psi_{\alpha})$-torseur pour tout $\alpha\in H^1(k,E)$.
Soit 
$$\Delta_V:=\{y\in \Delta:\ \exists \alpha\in H^1(k,E)\ \text{tel que}\ y\in h_{\alpha}(V_{\alpha}(\RA_k)^{\Br_{H_V}(V_{\alpha})})\} .$$
Par l'hypoth\`ese sur $x$, l'ensemble $\Delta_V$ est non vide.

On d\'efinit un ordre partiel de $\CS$:
pour tous $(\sigma_1, E_1,\psi_1,V_1,h_1), (\sigma_2, E_2,\psi_2,V_2,h_2) \in \CS$, 
on a $(\sigma_1, E_1,\psi_1,V_1,h_1)\leq (\sigma_2, E_2,\psi_2,V_2,h_2)$ si et seulement si $\sigma_1=\sigma_2$ et s'il existe un $\alpha \in H^1(k,E_1)$,
 un homomorphisme surjectif $\phi: E_2\to E_{1,\alpha}$ et un $E_2$-morphisme $h_{\phi}: V_2\to V_{1,\alpha}$ sur $Y_{\sigma_1}$.
 Dans ce cas, on a $\Delta_{V_2}\sbt \Delta_{V_1}$. 

Puisque $\Delta$ est fini, il existe un quintuple $(\sigma,E_0,\psi_0,V_0,h_0)$ dans $S$ tel que $\Delta_{V_0} $ soit minimal. On fixe un $y\in \Delta_{V_0}$.
Apr\`es avoir remplac\'e $\sigma$ par $\sigma+\psi_{0,*}(\alpha)$ pour certain $\alpha\in H^1(k,E_0)$, on peut supposer que $y\in Y_{\sigma}(\RA_k)$.

Pour tout torseur $Z\xrightarrow{f_1} Y_{\sigma}$ sous un $k$-groupe fini $F_1$, d'apr\`es \cite[Prop. 2.3 et (4)]{Sk1}, 
il existe un $(\sigma,E,\psi,h_V: V\to X,h)\in \CS$, un homomorphisme surjectif $\Ker(\psi)\to F_1$ et un $\Ker(\psi)$-morphisme $V\to Z$ sur $Y_{\sigma}$
avec 
\begin{equation}\label{BiDlem5.2e1}
 V:=R_{Y_{\sigma}\times_kF_{\sigma}/Y_{\sigma}}(Z\times_kF_{\sigma})\cong R_{Y_{\sigma}/X}(Z)\times_XY_{\sigma} \xrightarrow{h_V} X.
\end{equation}
Ceci induit
$$\Delta_V\sbt \cup_{\alpha\in H^1(k,\Ker(\psi))} h_{\alpha}(V_{\alpha}(\RA_k)^{\Br_{H_V}(V_{\alpha})})\sbt \cup_{\alpha\in H^1(k,F_1)} f_{1,\alpha}(Z_{\alpha}(\RA_k)^{\Br_{H_Z}(Z_{\alpha})}).$$
Par ailleurs, on a:
 $$(\sigma,E_0,\psi_0,V_0,h_0), (\sigma,E,\psi,V,h)\leq (\sigma, E_0\times_{F_{\sigma}} E,\psi_0\circ (id_{E_0}\times_{F_{\sigma}} \psi) , V_0\times_{Y_{\sigma}} V,h_0\circ (id_{V_0}\times_{Y_{\sigma}}h))$$ 
dans $\CS$, et donc $\Delta_{V_0}\supset \Delta_{V_0\times_{Y_{\sigma}} V}\sbt \Delta_V$.
Puisque $\Delta_{V_0}$ est minimal, on a $\Delta_{V_0}=\Delta_{V_0\times_{Y_{\sigma}} V}\sbt \Delta_{V}$.
Donc $y\in Y_{\sigma}(\RA_k)^{\et,\Br_{H_Y}}$, d'o\`u l'on d\'eduit (1).

 \medskip
 
 L'\'enonc\'e (2) d\'ecoule du m\^eme argument que l'\'enonc\'e (1). 
 
 Pour (3), d'apr\`es le corollaire \ref{BiDcor2.2.1}, le torseur $V\to X$ dans (\ref{BiDlem5.2e1}) est $G$-compatible.
 L'\'enonc\'e (3) d\'ecoule du m\^eme argument que l'\'enonc\'e (1).
\end{proof}

La proposition suivante g\'en\'eralise un lemme de Stoll (\cite{St}, cf. Lem. \ref{BiDlem4.2.3}).

\begin{prop}\label{BiDlem5.4}
 Soient $G$ un $k$-groupe lin\'eaire connexe et $(X,\rho)$ une $G$-vari\'et\'e lisse g\'eom\'e\-triquement int\`egre.
Supposons que $X(\RA_k)^{G-\et, \Br_G}\neq\emptyset$.
Alors, pour tout $k$-groupe fini $F$ et tout $F$-torseur $Y\to X$, 
il existe un $\sigma\in H^1(k,F)$ tel qu'il existe une composante connexe $Y'\sbt Y_{\sigma}$ qui est g\'eom\'e\-triquement int\`egre.

De plus, dans ce cas, il existe un sous $k$-groupe ferm\'e $F'\sbt F_{\sigma}$ tel que $Y'$ soit un $F'$-torseur sur $X$, 
o\`u l'action de $F'$ sur $Y'$ est induite par l'action de $F_{\sigma}$ sur $Y_{\sigma}$.
\end{prop}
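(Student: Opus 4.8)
\emph{Plan de la d\'emonstration} (suivant l'id\'ee de \cite[Lem. 7.1]{CDX} et de \cite{St}). On observera d'abord que la seconde assertion r\'esulte de la premi\`ere. En effet, si $Y'\subset Y_{\sigma}$ est une composante connexe g\'eom\'etriquement int\`egre, alors $Y'$ est form\'ee d'une unique composante g\'eom\'etrique de $Y_{\sigma}$, donc elle est stable sous $\Gamma_k$; comme l'action de $F_{\sigma}$ sur l'ensemble fini des composantes g\'eom\'etriques de $Y_{\sigma}$ est $\Gamma_k$-\'equivariante, le stabilisateur $F':=\{a\in F_{\sigma}\ :\ a\cdot Y'=Y'\}$ est un sous-$k$-groupe ferm\'e de $F_{\sigma}$, et l'on v\'erifiera aussit\^ot que $Y'\to X$ est un $F'$-torseur: l'action de $F'$ sur $Y'$ est libre (restriction de l'action libre de $F_{\sigma}$ sur $Y_{\sigma}$) et ses orbites dans les fibres de $Y'\to X$ sont ces fibres elles-m\^emes. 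Il restera donc \`a construire un $\sigma\in H^1(k,F)$ tel que $Y_{\sigma}$ poss\`ede une composante connexe g\'eom\'etriquement int\`egre.

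Le point central sera la r\'eduction suivante. Notons $(H_Y\xrightarrow{\psi_Y}G)$ le groupe minimal compatible au $F$-torseur $Y$ (d\'efinition \ref{BiDdef2.1}); il op\`ere sur chaque tordu $Y_{\sigma}$ d'apr\`es la proposition \ref{BiDprop2.2} (3). Pour $\sigma$ fix\'e, $Y_{\sigma}$ poss\`ede une composante connexe g\'eom\'etriquement int\`egre si et seulement si le $k$-sch\'ema fini \'etale $\pi_0(Y_{\sigma})$, dont les points g\'eom\'etriques correspondent aux composantes g\'eom\'etriques de $Y_{\sigma}$, admet un $k$-point. Soit $\phi_{\sigma}: Y_{\sigma}\to\pi_0(Y_{\sigma})$ le morphisme canonique; on a $\phi_{\sigma}^{*}\Br(\pi_0(Y_{\sigma}))\sbt\Br_1(Y_{\sigma})$, et d'apr\`es \cite[Prop. 3.3]{LX} (comme dans la d\'emonstration du lemme \ref{BiDlem4.2.1}) on a $\pi_0(Y_{\sigma})(\RA_k^{nc})^{\Br(\pi_0(Y_{\sigma}))}=\pi_0(Y_{\sigma})(k)$. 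En combinant ceci avec (\ref{BiDlem4.2.1-e}) et avec l'inclusion $\Br_1(Y_{\sigma})\sbt\Br_{H_Y}(Y_{\sigma})$ fournie par (\ref{BiDinvequa2.1}), on se ram\`enera \`a construire un $\sigma\in H^1(k,F)$ tel que
$$Y_{\sigma}(\RA_k)^{\Br_{H_Y}(Y_{\sigma})}\neq\emptyset.$$

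Pour un tel $\sigma$, le cas o\`u $Y$ est $G$-compatible sera imm\'ediat: on a alors $H_Y=G$ (car $(G\xrightarrow{\id}G)$ est objet final de $\CC_G(Y)$, son noyau \'etant trivial), de sorte que $Y$ figure parmi les torseurs de l'intersection d\'efinissant $X(\RA_k)^{G-\et,\Br_G}$ dans (\ref{BiDdef1e1}), et l'hypoth\`ese $X(\RA_k)^{G-\et,\Br_G}\neq\emptyset$ donne aussit\^ot $\sigma$. Dans le cas g\'en\'eral, comme aucun tordu de $Y$ n'est $G$-compatible lorsque $Y$ ne l'est pas, on travaillera avec $H_Y$: le noyau $S:=\Ker(\psi_Y)$ est un $k$-groupe fini, central dans $H_Y$, op\'erant librement sur $Y$ au-dessus de $X$ (proposition \ref{BiDprop2.3}); lorsque $Y$ est g\'eom\'etriquement int\`egre, le corollaire \ref{BiDcor2.3} identifie $S$ \`a un sous-groupe central de $F$, le quotient $Y/S$ est une $G$-vari\'et\'e, $Y/S\to X$ est un torseur fini $G$-compatible et $Y\to Y/S$ est un $S$-torseur et un $H_Y$-morphisme. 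En appliquant le cas $G$-compatible au torseur $Y/S\to X$, puis la m\'ethode de descente du \S 3 (proposition \ref{BiDprop4.1}) \`a la suite exacte $1\to S\to H_Y\to G\to 1$ et au $S$-torseur $Y\to Y/S$, on ram\`enera l'existence du $\sigma$ cherch\'e au cas $G$-compatible.

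L'obstacle principal sera ce dernier pas de r\'eduction, et en particulier le cas o\`u $Y$ n'est pas g\'eom\'etriquement int\`egre: il faudra transporter l'hypoth\`ese du cadre $G$-\'equivariant au cadre $H_Y$-\'equivariant le long de l'isog\'enie centrale $\psi_Y$, tout en contr\^olant les sous-groupes de Brauer $G$-invariants (resp. $H_Y$-invariants) le long de la descente. C'est pr\'ecis\'ement ce que rendront possible la libert\'e de l'action de $S$ (proposition \ref{BiDprop2.3}), le corollaire \ref{BiDcor2.3} et les r\'esultats de descente du \S 3.
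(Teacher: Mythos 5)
Your proposal is a plan rather than a proof, and the step you yourself flag as ``l'obstacle principal'' is precisely where the substance of the proposition lies; it is not supplied. Concretely: your reduction to finding $\sigma$ with $Y_{\sigma}(\RA_k)^{\Br_{H_Y}(Y_{\sigma})}\neq\emptyset$ (via $\pi_0$, \cite[Prop. 3.3]{LX} and $\Br_1\sbt\Br_{H_Y}$) is fine, and the $G$-compatible case is immediate as you say. But your treatment of the general case begins ``lorsque $Y$ est g\'eom\'etriquement int\`egre, le corollaire \ref{BiDcor2.3} identifie $S$ \`a un sous-groupe central de $F$\dots'' --- i.e.\ it presupposes exactly the conclusion of the proposition (existence of a geometrically integral twist/component). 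Corollary \ref{BiDcor2.3} genuinely requires $Y$ geometrically integral, and Proposition \ref{BiDprop4.1} requires the total space of the $S$-torsor to be a geometrically integral $H$-variety, so neither can be invoked for an arbitrary $F$-torsor $Y$. This is also circular with respect to the paper's architecture: the statement ``for every $Y$ there is $\sigma$ with $Y_{\sigma}(\RA_k)^{\Br_{H_Y}(Y_{\sigma})}\neq\emptyset$'' is essentially Lemma \ref{BiDlem5.1}, whose proof in the paper \emph{uses} Proposition \ref{BiDlem5.4} to reduce to the geometrically integral case.

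What is missing is the paper's actual mechanism for producing geometric integrality out of the hypothesis $X(\RA_k)^{G-\et,\Br_G}\neq\emptyset$. The paper works with the cocycle $\alpha\in H^1(\pi_1(X),F(\bk))$ of $[Y]$, kills it on an open normal $\Delta\sbt\pi_1(X)$, and first replaces $X$ by a geometrically integral connected component $Y_3$ of a twist of the $G$-compatible Galois cover $Y_2$ attached to $\Im(\rho_{\pi_1})\cdot\Delta$ (this uses the hypothesis applied to $Y_2$, Lemma \ref{BiDlem4.2.1} and Lemma \ref{BiDlem5.2} (3)); after this base change one has $\Im(\alpha_{\bk})=\Im(\alpha_{\bk}\circ\rho_{\pi_1})$, hence $\Im(\alpha_{\bk})$ commutative since $\pi_1(G_{\bk})$ is. Only then can one build, via Corollary \ref{BiDcor2.1.1}, a geometrically integral $S$-torsor $\CT\to X$ with $[\CT_{\bk}]=\alpha_{\bk}$, identify $S\cong\Ker(\psi_Y)$, and form the auxiliary $G$-compatible $F$-torsor $Y_5=(\CT\times_XY)/\Ker(\psi_Y)$, whose geometrically integral component (again from the hypothesis and Lemma \ref{BiDlem4.2.1}) maps onto one of $(Y/\Ker(\psi_Y))_{\sigma}$ and hence, $H_Y$ being connexe, yields one of $Y_{\sigma}$. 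None of this auxiliary construction appears in your sketch, so the proof is incomplete at its decisive point. (Your derivation of the second assertion from the first, via the stabilizer $F'$ of the component $Y'$, is correct.)
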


\begin{proof}
Le morphisme $G\times X\xrightarrow{\rho} X$ induit un homomorphisme $\rho_{\pi_1}: \pi_1(G_{\bk})\to \pi_1(X)$.
D'apr\`es le corollaire \ref{BiDrem2.2}, l'image $\Im(\rho_{\pi_1})$ est un sous-groupe normal de $\pi_1(X)$ 
et elle est contenue dans le centre de $\pi_1(X_{\bk})$.
Pour tout $k$-groupe fini $F_1$, d'apr\`es (\ref{BiDprop2.2e}), tout $F_1$-torseur $Y_1\to X$ induit un homomorphisme $\theta_1: \pi_1(X_{\bk})\to F_1(\bk)$ \`a conjugaison pr\`es et, 
d'apr\`es la proposition \ref{BiDprop2.2} et le corollaire \ref{BiDrem2.2}, $Y_1$ est $G$-compatible si et seulement si $\theta_1\circ \rho_{\pi_1}=0$. 

D'apr\`es (\ref{BiDprop2.2e}), soit $\alpha\in H^1(\pi_1(X),F(\bk))$ un 1-cocycle qui correspond \`a $[Y]\in H^1(X,F)$.
Il existe un sous-groupe ouvert distingu\'e $\Delta \sbt \pi_1(X)$ tel que $\alpha|_{\Delta}=0$.
Soient $\Delta_{\bk}:=\Delta\cap \pi_1(X_{\bk})$ et
$\alpha_{\bk}:=\alpha|_{\pi_1(X_{\bk})}$.
Alors $\alpha_{\bk}$ est un homomorphisme $\pi_1(X_{\bk})\to F(\bk)$.

\begin{lem}
Pour trouver  $Y'$ dans la proposition \ref{BiDlem5.4}, on peut supposer que $\Delta_{\bk}\cdot \Im(\rho_{\pi_1})=\pi_1(X_{\bk})$
 et donc $\Im(\alpha_{\bk})=\Im(\alpha_{\bk}\circ \rho_{\pi_1})$.
\end{lem}

\begin{proof}
Le sous-groupe $\Im(\rho_{\pi_1})\cdot \Delta $ est ouvert normal dans $\pi_1(X)$. 
Soit $Y_2\to X$ le rev\^etement galoisien correspondant.
Par  construction, $Y_2\to X$ est un torseur $G$-compatible 
sous un $k$-groupe constant $F_2=\pi_1(X)/(\Im(\rho_{\pi_1})\cdot \Delta)$.
Par hypoth\`ese, il existe un $\sigma\in H^1(k,F_2)$ tel que $Y_{2,\sigma}(\RA_k)^{\Br_G(Y_{2,\sigma})}\neq \emptyset$.
D'apr\`es le lemme \ref{BiDlem4.2.1} et (\ref{BiDlem4.2.1-e}), il existe une composante connexe $Y_3\sbt Y_2$ telle que $Y_3$ est g\'eom\'e\-triquement int\`egre.
Ainsi $Y_3\to X$ est un torseur sous un sous-groupe ferm\'e $F_3\sbt F_{2,\sigma}$ et on a
$$\Im(\pi_1(Y_{3,\bk})\hookrightarrow \pi_1(X_{\bk}))=\pi_1(X_{\bk})\cap \Im(\pi_1(Y_2)\to \pi_1(X))=\Im(\rho_{\pi_1})\cdot \Delta_{\bk}.$$
Alors $Y_3\to X$ est $G$-compatible.
Par le lemme \ref{BiDlem5.2} (3), apr\`es avoir remplac\'e $Y_3$ par son tordu,
on peut supposer que $Y_3(\RA_k)^{G-\et,\Br_G}\neq\emptyset$. 

S'il existe un $\sigma\in H^1(k,F)$ et une composante connexe $Y'_3$ du $F_{\sigma}$-torseur $Y_{\sigma}\times_XY_3\to Y_3$ tels que $Y'_3$ soit g\'eom\'e\-triquement int\`egre, 
alors l'image de $Y'_3$ par le morphisme fini \'etale $Y_{\sigma}\times_XY_3\to Y_{\sigma} $ 
est une composante connexe $Y'$ de $Y_{\sigma}$ telle que $Y'$ soit g\'eom\'e\-triquement int\`egre.
Donc on peut remplacer $X$ par $Y_3$ et,
apr\`es avoir remplac\'e  $X$ par $Y_3$, on peut supposer que $\Delta_{\bk}\cdot \Im(\rho_{\pi_1})=\pi_1(X_{\bk})$.
Puisque  $\alpha_{\bk}(\Delta_{\bk})=0$, on a $\Im(\alpha_{\bk})=\Im(\alpha_{\bk}\circ \rho_{\pi_1})$.
\end{proof}

Dans ce cas,  puisque $\pi_1(G_{\bk})$ est commutatif, $\Im(\alpha_{\bk})$ est commutatif.
D'apr\`es le corollaire \ref{BiDrem2.2}, $\alpha_{\bk}$ induit un homomorphisme $\pi_1(X_{\bk})^{ab}\to F(\bk)$ de noyau $\Gamma_k$-invariant, car $\alpha$ est d\'efini sur $k$.
D'apr\`es le corollaire \ref{BiDcor2.1.1}, il existe un $k$-groupe fini commutatif $S$ et un $S$-torseur $\CT\to X$
 tels que $\CT$ soit g\'eom\'e\-triquement int\`egre, $S(\bk)=\Im(\alpha_{\bk})$ et que,
 dans $H^1(X_{\bk},S)\cong \Hom_{cont}(\pi_1(X_{\bk}),\Im(\alpha_{\bk}))$, on ait $[\CT_{\bk}]=\alpha_{\bk}$.

Soit $(H_Y\xrightarrow{\psi_Y}G)$ le groupe minimal compatible au $F$-torseur $Y$.
D'apr\`es la remarque \ref{BiDrem2.3}, 
$(H_Y\xrightarrow{\psi_Y}G)$ est aussi le groupe minimal compatible au  $S$-torseur $\CT$.
D'apr\`es le corollaire \ref{BiDcor2.3}, $\Ker(\psi_Y)\cong S$.
Donc $Y_4:=\CT\times_XY$ est une $H_Y$-vari\'et\'e et $Y_4\to X$ est un $(S\times F)$-torseur $H_Y$-compatible.
Donc $Y_5:=Y_4/\Ker(\psi_Y)\to X$ est un $F$-torseur $G$-compatible 
et on a un $F$-morphisme fini \'etale $\phi_5: Y_5\to Y/\Ker(\psi_Y)$.
Par hypoth\`ese, il existe un $\sigma\in H^1(k,F)$ tel que $Y_{5,\sigma}(\RA_k)^{\Br_G(Y_{5,\sigma})}\neq\emptyset $.
D'apr\`es le lemme \ref{BiDlem4.2.1}, il existe une composante connexe $Y'_5$ de $Y_{5,\sigma}$ 
telle que $Y'_5$ soit g\'eom\'e\-triquement int\`egre.
Ainsi $\phi_5(Y'_5)$ est une composante connexe de $(Y/\Ker(\psi_Y))_{\sigma}$, qui est g\'eom\'e\-triquement int\`egre.
Puisque $H_Y$ est connexe, les composantes connexes g\'eom\'e\-triques de $(Y/\Ker(\psi_Y))_{\sigma}$ et de $Y_{\sigma}$ sont les m\^emes, d'o\`u le r\'esultat.
\end{proof}

\begin{lem}\label{BiDlem5.1}
On a $X(\RA_k)^{\et, \Br_G}=X(\RA_k)^{G-\et, \Br_G}$.
\end{lem}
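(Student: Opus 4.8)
The inclusion $X(\RA_k)^{\et, \Br_G}\sbt X(\RA_k)^{G-\et, \Br_G}$ has already been recorded above, so the plan is to prove the reverse inclusion $X(\RA_k)^{G-\et, \Br_G}\sbt X(\RA_k)^{\et, \Br_G}$. Fix $x\in X(\RA_k)^{G-\et, \Br_G}$ (which we may assume non vide) and an arbitrary $F$-torseur $f\colon Y\to X$ with $F$ fini, of groupe minimal compatible $(H_Y\xrightarrow{\psi_Y}G)$; it is enough to prove that $x\in\cup_{\sigma\in H^1(k,F)}f_{\sigma}(Y_{\sigma}(\RA_k)^{\Br_{H_Y}(Y_{\sigma})})$. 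First I would reduce to the case where $Y$ is g\'eom\'etriquement int\`egre. Since $X(\RA_k)^{G-\et, \Br_G}\neq\emptyset$, la proposition \ref{BiDlem5.4} fournit $\sigma_1\in H^1(k,F)$ et une composante connexe g\'eom\'etriquement int\`egre $Y'\sbt Y_{\sigma_1}$, qui est un $F'$-torseur sur $X$ pour un $k$-sous-groupe ferm\'e $F'\sbt F_{\sigma_1}$. Comme $H_Y=H_{Y_{\sigma_1}}$ est connexe, il stabilise $Y'$, donc $Y'$ est $H_Y$-compatible; d'apr\`es la remarque \ref{BiDrem2.3} et le corollaire \ref{BiDrem2.2}~(3), le groupe minimal compatible ne d\'epend que de $\Ker(\alpha_{\bk}\circ\rho_{\pi_1})$, et le passage de $Y_{\sigma_1}$ \`a sa composante $Y'$ (qui ne fait que restreindre le but de la monodromie g\'eom\'etrique \`a son image) ne change pas ce noyau, d'o\`u $H_{Y'}=H_Y$. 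De plus, pour tout $\sigma'\in H^1(k,F')$, le tordu $Y'_{\sigma'}$ est une composante connexe (donc ouverte et ferm\'ee) de $Y_{\sigma_1\ast\sigma'}$ pour la classe $\sigma_1\ast\sigma'\in H^1(k,F)$ obtenue en tordant $\sigma_1$ par $\sigma'$, et $f_{\sigma_1\ast\sigma'}|_{Y'_{\sigma'}}=f'_{\sigma'}$; d'apr\`es la d\'efinition (2) de $\Br_{H_Y}$, la restriction le long de l'inclusion de cette composante donne $f'_{\sigma'}(Y'_{\sigma'}(\RA_k)^{\Br_{H_Y}(Y'_{\sigma'})})\sbt f_{\sigma_1\ast\sigma'}(Y_{\sigma_1\ast\sigma'}(\RA_k)^{\Br_{H_Y}(Y_{\sigma_1\ast\sigma'})})$. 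Il suffit donc de traiter $Y'\to X$, i.e.\ on peut supposer $Y$ g\'eom\'etriquement int\`egre.

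Supposons d\'esormais $Y$ g\'eom\'etriquement int\`egre et posons $S:=\Ker(\psi_Y)$, un $k$-groupe fini contenu dans le centre de $H_Y$. D'apr\`es le corollaire \ref{BiDcor2.3} il existe un homomorphisme injectif $\phi\colon S\to F$ d'image centrale, compatible avec les deux actions de $S$ sur $Y$ (via $\Ker(\psi_Y)\sbt H_Y$ et via $\phi(S)\sbt F$). Posons $W:=Y/\phi(S)=Y/S$. Alors $W\to X$ est un $F/\phi(S)$-torseur, l'action de $H_Y$ sur $Y$ descend en une action de $H_Y/S\cong G$ sur $W$ pour laquelle $q\colon W\to X$ est un $G$-morphisme, de sorte que $W\to X$ est $G$-compatible; le morphisme $g\colon Y\to W$ est un $S$-torseur $H_Y$-\'equivariant, l'action de $S$ \'etant induite par celle de $H_Y$; et $f=q\circ g$. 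Comme $Y$ et $W$ sont lisses g\'eom\'etriquement int\`egres, les hypoth\`eses de la proposition \ref{BiDprop4.1} sont v\'erifi\'ees par $g$ (avec les groupes $G$, $H_Y$, $S$), ainsi que par chacun de ses tordus.

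Enfin, comme $x\in X(\RA_k)^{G-\et,\Br_G}$ et que $W\to X$ est $G$-compatible, il existe $\mu\in H^1(k,F/\phi(S))$ et $w\in W_{\mu}(\RA_k)^{\Br_G(W_{\mu})}$ tels que $q_{\mu}(w)=x$. Par (\ref{BiDinvequa2.1}), $W_{\mu}(\RA_k)^{\Br_1(W_{\mu})}\neq\emptyset$, donc la proposition \ref{BiDprop4.3} (cas (2), appliqu\'ee \`a $1\to\phi(S)\to F\to F/\phi(S)\to 1$) fournit $\sigma_0\in H^1(k,F)$ d'image $\mu$. En tordant la factorisation $f=q\circ g$ par $\sigma_0$, on obtient un $S$-torseur $H_Y$-\'equivariant $g_{\sigma_0}\colon Y_{\sigma_0}\to W_{\mu}$ au-dessus de la $G$-vari\'et\'e $W_{\mu}$, avec $q_{\mu}\circ g_{\sigma_0}=f_{\sigma_0}$; la proposition \ref{BiDprop4.1} donne alors
$$W_{\mu}(\RA_k)^{\Br_G(W_{\mu})}=\bigcup_{\tau\in H^1(k,S)}(g_{\sigma_0})_{\tau}\bigl((Y_{\sigma_0})_{\tau}(\RA_k)^{\Br_{H_Y}((Y_{\sigma_0})_{\tau})}\bigr).$$
Il existe donc $\tau\in H^1(k,S)$ et $y\in(Y_{\sigma_0})_{\tau}(\RA_k)^{\Br_{H_Y}((Y_{\sigma_0})_{\tau})}$ avec $(g_{\sigma_0})_{\tau}(y)=w$; en \'ecrivant $(Y_{\sigma_0})_{\tau}=Y_{\sigma_0\ast\tau}$ (d'image encore $\mu$ dans $H^1(k,F/\phi(S))$, puisque $\tau$ s'y annule) et par fonctorialit\'e de la torsion, $f_{\sigma_0\ast\tau}=q_{\mu}\circ(g_{\sigma_0})_{\tau}$, d'o\`u $x=q_{\mu}(w)=f_{\sigma_0\ast\tau}(y)$ avec $y\in Y_{\sigma_0\ast\tau}(\RA_k)^{\Br_{H_Y}(Y_{\sigma_0\ast\tau})}$ et $H_{Y_{\sigma_0\ast\tau}}=H_Y$. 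Ceci montre $x\in\cup_{\sigma\in H^1(k,F)}f_{\sigma}(Y_{\sigma}(\RA_k)^{\Br_{H_Y}(Y_{\sigma})})$, puis $x\in X(\RA_k)^{\et,\Br_G}$.

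Le point d\'elicat attendu est la premi\`ere \'etape: il faut s'assurer que remplacer $Y$ par une composante g\'eom\'etriquement int\`egre d'un tordu ne modifie pas le groupe minimal compatible $H_Y$ (c'est l\`a qu'interviennent la remarque \ref{BiDrem2.3} et le corollaire \ref{BiDrem2.2}~(3)), puis suivre pr\'ecis\'ement les diff\'erentes classes de torsion pour que, dans la factorisation $f=q\circ g$, la vari\'et\'e interm\'ediaire $W$ ne soit jamais tordue davantage que par $\mu$ — faute de quoi la proposition \ref{BiDprop4.1} ne s'applique pas directement. Une fois $Y$ g\'eom\'etriquement int\`egre, l'argument est une combinaison assez directe du corollaire \ref{BiDcor2.3} et des propositions \ref{BiDprop4.1} et \ref{BiDprop4.3}.
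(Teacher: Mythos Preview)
Your proof is correct and follows essentially the same route as the paper's: reduce to $Y$ g\'eom\'etriquement int\`egre via la proposition~\ref{BiDlem5.4}, use le corollaire~\ref{BiDcor2.3} to get the central embedding $\Ker(\psi_Y)\hookrightarrow F$, pass to the $G$-compatible quotient torsor $W=Y/\Ker(\psi_Y)\to X$, lift the resulting class in $H^1(k,F/\phi(S))$ to $H^1(k,F)$ via la proposition~\ref{BiDprop4.3}~(2), and finish with la proposition~\ref{BiDprop4.1}. The only substantive difference is that you spell out, via la remarque~\ref{BiDrem2.3}, why $H_{Y'}=H_Y$ when passing to a geometrically integral component --- a point the paper leaves implicit in the phrase ``on peut supposer que $Y$ est g\'eom\'etriquement int\`egre''; your extra care there is warranted, since without $H_{Y'}=H_Y$ the reduction would a priori only give the weaker $\Br_{H_{Y'}}$-condition.
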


\begin{proof}
Il suffit de montrer que, pour tout $x\in X(\RA_k)^{G-\et, \Br_G}$, tout $k$-groupe fini $F$ 
et tout $F$-torseur $Y\xrightarrow{f}X$, 
il existe un $\sigma\in H^1(k,F)$, un $y\in Y_{\sigma}(\RA_k)^{\Br_{H_Y}(Y_{\sigma})}$ tels que $f_{\sigma}(y)=x$,
o\`u $(H_Y\xrightarrow{\psi_Y}G)$ est le groupe minimal compatible au $F$-torseur $Y$.

On fixe de tels $x,F,Y,f$.

D'apr\`es la proposition \ref{BiDlem5.4}, on peut supposer que $Y$ est g\'eom\'e\-triquement int\`egre.

D'apr\`es le corollaire \ref{BiDcor2.3}, il existe un plongement $\phi: \Ker(\psi_Y)\to F$ d'image centrale   compatible avec les actions de  $\Ker(\psi_Y)$ et de $F$ sur $Y$.
Ceci induit une suite exacte de $k$-groupes finis 
$$1\to \Ker(\psi_Y)\xrightarrow{\phi} F\xrightarrow{\phi_1} F_1\to 1 $$
qui d\'efinit $F_1$.
Alors $Y_1:=Y/\Ker(\psi_Y)\xrightarrow{f_1} X$ est un $F_1$-torseur $G$-compatible sur $X$.
De plus, $Y_1$ est lisse et g\'eom\'e\-triquement int\`egre.

Par hypoth\`ese, il existe un $\sigma_1\in H^1(k,F_1) $ et un $y_1\in Y_{1,\sigma_1}(\RA_k)^{\Br_G(Y_{1,\sigma_1})}$
tels que $f_{1,\sigma_1}(y_1)=x$.
D'apr\`es le corollaire \ref{BiDprop4.3cor}, il existe un $\sigma_0\in H^1(k,F)$ tel que $\phi_{1,*}(\sigma_0)=\sigma_1$.
Comme l'image de $\phi$ est centrale dans $F$, on a $\Ker(\psi_Y)_{\sigma_0}=\Ker(\psi_Y)$.

L'argument ci-dessus donne un $\Ker(\psi_Y)$-torseur $Y_{\sigma_0}\to Y_{1,\sigma_1}$ compatible avec l'action de $H_Y$.
D'apr\`es la proposition \ref{BiDprop4.1}, il existe un $\sigma_2\in H^1(k,\Ker(\psi_Y))$ 
 et un $y\in Y_{\sigma}(\RA_k)^{\Br_{H_Y}(Y_{\sigma})}$ avec $\sigma:=\sigma_0+\sigma_2\in H^1(k,F)$ 
 tels que $f_{\sigma}(y)=x$.
\end{proof}

\begin{lem}\label{BiDlem5.3}
On a $  X(\RA_k)^{\et, \Br}= X(\RA_k)^{\et, \Br_G}$.
\end{lem}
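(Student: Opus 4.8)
Le plan est d'\'etablir l'inclusion non triviale $X(\RA_k)^{\et,\Br_G}\sbt X(\RA_k)^{\et,\Br}$, l'autre inclusion \'etant imm\'ediate puisque $\Br_{H_Y}(Y_\sigma)\sbt \Br(Y_\sigma)$ pour tout $F$-torseur $Y\xrightarrow{f}X$ et tout $\sigma\in H^1(k,F)$. On pourra supposer $X(\RA_k)^{\et,\Br_G}\neq\emptyset$ et fixer $x$ dans cet ensemble; comme $X(\RA_k)^{\et,\Br_G}\sbt X(\RA_k)^{G-\et,\Br_G}$, ce dernier sera non vide, ce qui rendra disponible la proposition \ref{BiDlem5.4}. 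L'id\'ee g\'en\'erale est la suivante: pour v\'erifier $x\in X(\RA_k)^{\et,\Br}$ on se ram\`enera au cas des torseurs g\'eom\'e\-triquement int\`egres; on descendra $x$ dans un tordu d'un tel torseur via le lemme \ref{BiDlem5.2} (1) comme point orthogonal au sous-groupe de Brauer invariant du torseur; puis on appliquera la proposition \ref{BiDprop5.1} au rev\^etement lui-m\^eme pour remplacer cette orthogonalit\'e par l'orthogonalit\'e au groupe de Brauer complet.

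Concr\`etement, on traitera d'abord le cas d'un $F'$-torseur g\'eom\'e\-triquement int\`egre $f': Z'\to X$, de groupe minimal compatible $(H'\xrightarrow{\psi'}G)$. D'apr\`es la proposition \ref{BiDprop2.2} (3), chaque tordu $Z'_{\tau'}$, $\tau'\in H^1(k,F')$, sera une $H'$-vari\'et\'e lisse g\'eom\'e\-triquement int\`egre (le changement de base \`a $\bk$ ne modifie pas $Z'_{\bk}$). Le lemme \ref{BiDlem5.2} (1) appliqu\'e \`a $f'$ donnera alors $x\in X(\RA_k)^{\et,\Br_G}=\cup_{\tau'}f'_{\tau'}\bigl(Z'_{\tau'}(\RA_k)^{\et,\Br_{H'}}\bigr)$, d'o\`u un $\tau'$ et un $y\in Z'_{\tau'}(\RA_k)^{\et,\Br_{H'}}$ avec $f'_{\tau'}(y)=x$. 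En combinant l'inclusion imm\'ediate $Z'_{\tau'}(\RA_k)^{\et,\Br_{H'}}\sbt Z'_{\tau'}(\RA_k)^{c.c.,\et,\Br_{H'}}$ avec la proposition \ref{BiDprop5.1} appliqu\'ee \`a la $H'$-vari\'et\'e lisse g\'eom\'e\-triquement int\`egre $Z'_{\tau'}$, on obtiendra $y\in Z'_{\tau'}(\RA_k)^{\Br(Z'_{\tau'})}$; c'est l\`a le point essentiel.

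Il restera \`a recoller. Soit $f: Z\to X$ un $F$-torseur quelconque sous un $k$-groupe fini $F$. La proposition \ref{BiDlem5.4} fournira un $\tau_0\in H^1(k,F)$, une composante connexe g\'eom\'e\-triquement int\`egre $Z'\sbt Z_{\tau_0}$ et un $k$-sous-groupe ferm\'e $F'\sbt F_{\tau_0}$ tels que $f': Z'\to X$ soit un $F'$-torseur. L'\'etape pr\'ec\'edente appliqu\'ee \`a $f'$ donnera $\tau'$ et $y\in Z'_{\tau'}(\RA_k)^{\Br(Z'_{\tau'})}$ avec $f'_{\tau'}(y)=x$. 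Comme $Z'_{\tau'}$ s'identifie, compatiblement avec les projections sur $X$, \`a une composante ouverte et ferm\'ee d'un tordu $Z_\tau$ de $Z$, la restriction $\Br(Z_\tau)\to \Br(Z'_{\tau'})$ et la fonctorialit\'e de l'accouplement de Brauer-Manin montreront que $y$, vu dans $Z_\tau(\RA_k)$, appartient \`a $Z_\tau(\RA_k)^{\Br(Z_\tau)}$ et v\'erifie $f_\tau(y)=x$. Le torseur $Z$ \'etant arbitraire, on conclura $x\in X(\RA_k)^{\et,\Br}$.

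Le point le plus d\'elicat ne r\'esidera pas dans cette combinaison, qui est essentiellement formelle une fois les \'enonc\'es ant\'erieurs admis, mais dans ces \'enonc\'es eux-m\^emes (la proposition \ref{BiDprop5.1}, dont la preuve repose sur l'existence d'un torseur universel de $n$-torsion et sur la proposition \ref{BiDcor2.3.1}, et la proposition \ref{BiDlem5.4}). Dans la preuve du lemme \ref{BiDlem5.3} proprement dite, il faudra surtout veiller \`a suivre correctement les groupes minimaux compatibles \`a travers les tordus et les composantes connexes, et \`a appliquer la proposition \ref{BiDprop5.1} au rev\^etement $Z'_{\tau'}$ (et non seulement \`a $X$), ce qui rend essentielle la notion de groupe minimal compatible et sa compatibilit\'e au changement de base \'etablie \`a la proposition \ref{BiDprop2.3}.
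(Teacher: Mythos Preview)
Votre plan est correct et suit exactement la m\^eme strat\'egie que la preuve du papier : r\'eduction au cas g\'eom\'etriquement int\`egre via la proposition \ref{BiDlem5.4}, descente par le lemme \ref{BiDlem5.2} (1), puis application de la proposition \ref{BiDprop5.1} au rev\^etement tordu (et non \`a $X$) pour passer de $\Br_{H'}$ \`a $\Br$ complet. Votre exposition est simplement plus d\'etaill\'ee que celle du papier, notamment sur le recollement qui permet d'identifier $Z'_{\tau'}$ \`a une composante d'un tordu de $Z$; le papier absorbe cette \'etape dans la phrase \og on peut supposer que $Y$ est g\'eom\'etriquement int\`egre \fg.
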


\begin{proof}
On peut supposer que $X(\RA_k)^{\et, \Br_G}\neq\emptyset$.
Il suffit de montrer que, pour tout $k$-groupe fini $F$ et tout $F$-torseur $f: Y\to X$, on a
$$X(\RA_k)^{\et, \Br_G}\sbt \cup_{\sigma\in H^1(k,F)}f_{\sigma}(Y_{\sigma}(\RA_k)^{\Br(Y_{\sigma})}).$$
D'apr\`es la proposition \ref{BiDlem5.4}, on peut supposer que $Y$ est g\'eom\'e\-triquement int\`egre.
L'\'enonc\'e d\'ecoule de la proposition \ref{BiDprop5.1} et du lemme \ref{BiDlem5.2} (1).
\end{proof}

\begin{proof}[D\'emonstration du th\'eor\`eme \ref{BiDthm1}.]
D'apr\`es le lemme \ref{BiDlem4.2.1} et (\ref{BiDlem4.2.1-e}), on peut supposer que $X$ est g\'eom\'e\-triquement int\`egre.
On  obtient le th\'eor\`eme 
par combinaison du lemme \ref{BiDlem5.3} et du lemme \ref{BiDlem5.1}.
\end{proof}

\begin{rem}
Rappelons les cat\'egories $\mathbf{AB}$ et $\mathbf{GX}$ dans \S \ref{BiD4}.
On fixe un objet $(G,X)\in \mathbf{GX}$. 

Soit $\mathbf{GX}_X$ l'ensemble des objets $(H,Y)\in \mathbf{GX}$ tels qu'il existe un morphisme $(\psi,f): (H,Y)\to (G,X)$ dans $\mathbf{GX}$ avec $\psi, f$ finis.

Dans toute cette section (\S \ref{BiD6}), pour \'etablir le th\'eor\`eme \ref{BiDthm1} de $(G,X)$,
 l'hypoth\`ese que $G$ est lin\'eaire et la notion de sous-groupe de Brauer invariant sont utilis\'es seulement 
 pour appliquer la proposition \ref{BiDcor2.3.1}, la proposition \ref{BiDprop4.1}, 
le corollaire \ref{BiD4cor1}, le lemme \ref{BiDlem4.2.1} et le corollaire \ref{BiDprop4.3cor} 
\`a l'\'el\'ement dans $\mathbf{GX}_X$.
Donc, cette section a essentiellement montr\'e :

{\it 
pour tout foncteur contravariant $B(-,-): \mathbf{GX}\to \mathbf{AB}$ qui associe au couple $(H,Y)$ un sous-groupe $B(H,Y)\sbt \Br(Y)$,
si l'on peut \'etablir  la proposition \ref{BiDcor2.3.1}, la proposition \ref{BiDprop4.1}, le corollaire \ref{BiD4cor1}, le lemme \ref{BiDlem4.2.1} et le corollaire \ref{BiDprop4.3cor}
pour tout \'el\'ement dans $\mathbf{GX}_X$ (en rempla\c{c}ant tout groupe de Brauer invariant par le $B(-,-)$ correspondant), 
alors on a $X(\RA_k)^{\et,\Br}=X(\RA_k)^{G-\et,B(G,-)}$, o\`u $X(\RA_k)^{G-\et,B(G,-)}$ est d\'efini de la m\^eme fa\c{c}on que $X(\RA_k)^{G-\et,\Br_G}$.
}

\end{rem}

\section{D\'emonstration des th\'eor\`emes \ref{BiDthm2} et \ref{BiDcor2}}

Dans toute cette section,  $k$ est un corps de nombres. 
Sauf  mention explicite du contraire, une vari\'et\'e est  une $k$-vari\'et\'e.

\begin{proof}[D\'emonstration du th\'eor\`eme \ref{BiDthm2}]
D'apr\`es le lemme \ref{BiDlem4.2.1} et (\ref{BiDlem4.2.1-e}), on peut supposer que $Z$ est g\'eom\'e\-triquement int\`egre.
Si $G$ est connexe, l'\'enonc\'e d\'ecoule du th\'eor\`eme \ref{BiDthm1} et de la proposition \ref{BiDprop4.2}.
Si $G$ est fini, d'apr\`es la proposition \ref{BiDlem5.4}, on peut supposer que $X$ est g\'eom\'e\-triquement int\`egre, et le r\'esultat d\'ecoule du lemme \ref{BiDlem5.2} (2).

Pour \'etablir le cas g\'en\'eral, on reprend certains arguments de \cite{D09} et \cite{CDX}.
Il existe une suite exacte
$$1\to N\to G\xrightarrow{\psi} F\to 1$$
de $k$-groupes lin\'eaires avec $N$ un $k$-groupe lin\'eaire connexe et $F$ un $k$-groupe fini.
Alors 
$$h: U:=X/N\to Z$$
 est un $F$-torseur. Notons $q: X\to U$.
Pour un $z\in Z(\RA_k)^{\et,\Br}$, il existe un $\sigma\in H^1(k,F)$ et un $u\in U_{\sigma}(\RA_k)^{\et,\Br}$ tels que $h_{\sigma}(u)=z$.
D'apr\`es la proposition \ref{BiDprop4.3} (1), il existe un $\alpha_0\in H^1(k,G)$ tel que $\psi_*(\alpha_0)=\sigma$.
Ceci induit une suite exacte 
$$1\to N'\xrightarrow{\phi} G_{\alpha_0}\xrightarrow{\psi_{\alpha_0}} F_{\sigma}\to 1 $$
de $k$-groupes lin\'eaires.
Alors $N'_{\bk}\cong N_{\bk}$ et $N'$ est un $k$-groupe lin\'eaire connexe. Ainsi $q_{\alpha_0}: X_{\alpha_0}\to U_{\sigma}$ est un $N'$-torseur.
Donc il existe un $\beta\in H^1(k,N')$ et un $x\in (X_{\alpha_0})_{\beta}(\RA_k)^{\et,\Br}$ tels que $(q_{\alpha_0})_{\beta}(x)=u$.
Soit $\alpha:=\alpha_0+\phi_*(\beta)$. 
Alors $(X_{\alpha_0})_{\beta}=X_{\alpha}$ et $p_{\alpha}=h_{\sigma}\circ (q_{\alpha_0})_{\beta}$.
Donc $x\in X_{\alpha}(\RA_k)^{\et,\Br}$ et $p_{\alpha}(x)=z$, d'o\`u le r\'esultat.
\end{proof}

\begin{proof}[D\'emonstration du th\'eor\`eme \ref{BiDcor2}]
Ceci d\'ecoule du th\'eor\`eme \ref{BiDthm2} et de \cite[Thm. 1.5]{CDX}.
\end{proof}

\begin{proof}[D\'emonstration du corollaire \ref{BiDcor1}]
 Pour tout $k$-groupe fini $F$ et tout $F$-torseur $G$-compatible $f: Y\to X$, d'apr\`es le corollaire \ref{BiDrem2.2} (4), il existe un $F$-torseur $M$ sur $k$ tel que $Y\cong M\times_k X$ comme $F$-torseurs.
 Alors il existe un $\sigma_0\in H^1(k,F)$ tel que $Y_{\sigma_0}\cong F\times X$.
 Donc 
 $$X(\RA_k)^{\Br_G(X)}\sbt f_{\sigma_0}(Y_{\sigma_0}(\RA_k)^{\Br_G(Y_{\sigma_0})})\sbt \cup_{\sigma\in H^1(k,F)} f_{\sigma}(Y_{\sigma}(\RA_k)^{\Br_G(Y_{\sigma})}). $$
 Ainsi $ X(\RA_k)^{G-\et, \Br_G}=X(\RA_k)^{\Br_G(X)}$ et le r\'esultat d\'ecoule du th\'eor\`eme \ref{BiDthm1}.
\end{proof}

\bigskip

\noindent{\bf Remerciements.}
Je remercie tr\`es chaleureusement Jean-Louis Colliot-Th\'el\`ene et Cyril Demarche pour leurs commentaires.

\bibliographystyle{alpha}

\begin{thebibliography}{Gro}
\bibitem[Ba]{Bale} F. Balestrieri: \emph{Iterating the algebraic etale-Brauer set}, Journal of Number Theory 182 (2018), 284--295.
\bibitem[BD]{BD} M. Borovoi et C. Demarche: \emph{Manin obstruction to strong approximation for homogeneous spaces}, Comment. Math. Hev. 88 (2013), 1-54.
\bibitem[BS]{BS} M. Brion et T. Szamuely: \emph{Prime-to-p \'etale covers of algebraic groups and homogeneous spaces}, Bull. Lond. Math. Soc. 45 (2013), no. 3, 602--612. 
\bibitem[Cod]{Co} B. Conrad:  \emph{Weil and Grothendieck approaches to adelic points}, Enseign. Math. 58 (2012), 61--97.
\bibitem[CDX]{CDX} Y. Cao, C. Demarche, F. Xu: \emph{Comparing descent obstruction and Brauer-Manin obstruction for open varieties}, Trans. Amer. Math. Soc. 371 (2019), no. 12, 8625--8650.
\bibitem[C18]{C1} Y. Cao: \emph{Approximation forte pour les vari\'et\'es avec une action d'un groupe lin\'eaire}, Compositio math. 154 (2018), 773--819. 
\bibitem[CLX]{CLX} Y. Cao, Y. Liang, F. Xu: \emph{Arithmetic purity of strong approximation},  Journal de Math\'ematiques Pures et Appliqu\'ees 132 (2019), 334--368.
\bibitem[CT08]{CT07} J.-L. Colliot-Th\'el\`ene: \emph{R\'esolutions flasques des groupes lin\'eaires connexes}, J. reine angew. Math. 618 (2008), 77-133.
\bibitem[CTSa]{CTS1} J.-L. Colliot-Th\'el\`ene et J.-J. Sansuc:  \emph{Principal homogeneous spaces under flasque tori, applications}, Journal of Algebra 106 (1987) 148-205.
 \bibitem[CTSb]{CTS} J.-L. Colliot-Th\'el\`ene et J.-J. Sansuc:  \emph{La descente sur les vari\'et\'es rationnelles, II}, Duke Math. J. 54 (1987) 375-492.
\bibitem[D09a]{D09} C. Demarche: \emph{Obstruction de descente et obstruction de Brauer-Manin \'etale}, Algebra \& Number Theory 3 (2009) 237--254.
\bibitem[D09b]{Dth} C. Demarche: \emph{M\'ethodes cohomologiques pour l'\'etude des points rationnels sur les espaces homog\`enes},  Th\`ese de doctorat, Universit\'e Paris-Sud (2009).
\bibitem[Fu]{Fu} L. Fu: \emph{\'Etale cohomology theory},  Nankai Tracts in Mathematics, Vol. 13, World Scientific, 2011.
\bibitem[HS02]{HS02} D. Harari, A. N. Skorobogatov:  \emph{Non-abelian cohomology and rational points}, Compositio Math. 130 (2002) 241--273.
\bibitem[HS13]{HS13} D. Harari, A. N. Skorobogatov:  \emph{Descent theory for open varieties}, Torsors, \'etale homotopy and applications to rational points. LMS Lecture Note Series 405, Cambridge University Press (2013), 250--279.
\bibitem[KS]{KS} M. Kashiwara et P. Schapira, \emph{Categories and sheaves}, Grundlehren der Mathematischen Wissenschaften, vol. 332, Springer-Verlag, Berlin, 2006.
\bibitem[LX]{LX} Q. Liu et F. Xu \emph{Very strong approximation for certain algebraic varieties}, Math. Ann. 363 (2015) 701--731.
\bibitem[Ma]{Ma} Y. I. Manin: \emph{Le groupe de Brauer-Grothendieck en g\'eom\'etrie diophantienne}, Actes du Congr\`es International des Math\'ematiciens (Nice, 1970), Tome 1, pp. 401--411. Gauthier-Villars, Paris, 1971. 
\bibitem[Mi]{Mi80} J. S. Milne: \emph{\'Etale Cohomology}, Princeton Math. Ser 33, Princeton University Press, Princeton 1980.
\bibitem[Miy]{Miy} M. Miyanishi: \emph{On the algebraic fundamental group of an algebraic group}, J. Math. Kyoto Univ. 12-2 (1972), 351--367.
\bibitem[Po10]{P} B. Poonen: \emph{Insufficiency of the Brauer-Manin obstruction applied to \'etale covers}, Ann. of Math. 171 (2010) 2157--2169.
\bibitem[Po]{P1} B. Poonen: \emph{Rational points on varieties},  Graduate Studies in Mathematics, 186, American Mathematical Society, Providence, RI, 2017.
\bibitem[S]{S} J.-J. Sansuc: \emph{Groupe de Brauer et arithm\'etique des groupes alg\'ebriques lin\'eaires sur un corps de nombres}, J. reine angew   Math. 327 (1981), 12-80.
\bibitem[Se65]{Se} J.-P. Serre: \emph{Cohomologie Galoisienne}, Lecture Notes in Mathematics, vol 5, Springer, Berlin, 1965.
\bibitem[SGA1]{SGA1}  A. Grothendieck et al.: \emph{Rev\^etements \'etales et groupe fondamental, (SGA 1) }, Lecture notes in mathematics 224, Berlin; New York, Springer--Verlag 1971.
\bibitem[SGA4]{SGA4} A. Grothendieck, M. Artin, L. Verdier et al.: \emph{Th\'eorie des topos et cohomologie \'etale des sch\'emas, (SGA 4)}, Lecture Notes in Math. 269, 270, 305, Springer-Verlag (1972-1973).
\bibitem[SGA4$\frac{1}{2}$]{SGA4.5} P. Deligne et al.: \emph{Cohomologie \'etale (SGA $4\frac{1}{2}$)}, Lecture Notes in Math. 569, Springer-Verlag (1977).
\bibitem[Sk99]{Sk99} A. N. Skorobogatov: \emph{Beyond the Manin obstruction}, Invent. Math. 135 (1999), no. 2, 399--424.
\bibitem [Sk01]{sko} A. N. Skorobogatov: \emph{ Torsors and Rational Points}, Cambridge Tracts in Mathematics, vol. 144,    Cambridge University Press, 2001.
\bibitem[Sk09]{Sk1} A. N. Skorobogatov: \emph{Descent obstruction is equivalent to \'etale Brauer-Manin obstruction}, Math. Ann. 344 (2009) 501--510.
\bibitem[St]{St} M. Stoll: \emph{Finite descent obstructions and rational points on curves}, Algebra Number Theory 1 (2007) 349--391.
\bibitem [SZ]{SZ} A. N. Skorobogatov, Y. G. Zarhin: \emph{The Brauer group and the Brauer-Manin set of products of varieties},
J. Eur. Math. Soc. 16 (2014) 749--769.
\bibitem[Sz]{Sz} T. Szamuely: \emph{Galois Groups and Fundamental Groups}, Cambridge Studies in Advanced Mathematics 117, Cambridge University Press, Cambridge, 2009.
\end{thebibliography}
\end{document}